\numberwithin{equation}{section}
\theoremstyle{plain}
\newtheorem{theorem}{Theorem}[section]
\newtheorem{corollary}[theorem]{Corollary}
\newtheorem{lemma}[theorem]{Lemma}
\newtheorem{proposition}[theorem]{Proposition}
\theoremstyle{definition}
\newtheorem{definition}[theorem]{Definition}
\newtheorem{example}[theorem]{Example}
\newtheorem{remark}[theorem]{Remark}
\newcommand{\R}{{\mathbb R}}
\newcommand{\N}{{\mathbb N}}
\newcommand{\Om}{\Omega}
\providecommand{\vint}[1]{\mathchoice
	{\mathop{\vrule width 5pt height 3 pt depth -2.5pt
			\kern -9pt \kern 1pt\intop}\nolimits_{\kern -5pt{#1}}}
	{\mathop{\vrule width 5pt height 3 pt depth -2.6pt
			\kern -6pt \intop}\nolimits_{\kern -3pt{#1}}}
	{\mathop{\vrule width 5pt height 3 pt depth -2.6pt
			\kern -6pt \intop}\nolimits_{\kern -3pt{#1}}}
	{\mathop{\vrule width 5pt height 3 pt depth -2.6pt
			\kern -6pt \intop}\nolimits_{\kern -3pt{#1}}}}
\newcommand{\eps}{\varepsilon}
\newcommand{\loc}{\mathrm{loc}}
\newcommand{\ch}{\text{\raise 1.3pt \hbox{$\chi$}\kern-0.2pt}}
\DeclareMathOperator{\Mod}{Mod}
\DeclareMathOperator{\diam}{diam}
\DeclareMathOperator{\lip}{lip}
\begin{document}
\title[Metric Quasiconformality and Sobolev regularity in non-Ahlfors spaces]{Metric quasiconformality and Sobolev regularity in non-Ahlfors regular spaces}
\author{Panu Lahti}
\address{Panu Lahti,  Academy of Mathematics and Systems Science, Chinese Academy of Sciences,
	Beijing 100190, PR China, {\tt panulahti@amss.ac.cn}}
\author{Xiaodan Zhou}
\address{Xiaodan Zhou, Analysis on Metric Spaces Unit, Okinawa Institute of Science and Technology Graduate University, 1919-1, Onna-son, 
Okinawa 904-0495, Japan, {\tt xiaodan.zhou@oist.jp}}

\subjclass[2020]{30L10, 30C65, 46E36}
\keywords{Quasiconformal mapping, Newton-Sobolev mapping, modulus of a curve family,
absolute continuity, Ahlfors regularity, weighted space}

\begin{abstract}
Given a homeomorphism $f\colon X\to Y$ between $Q$-dimensional spaces $X,Y$,
we show that $f$ satisfying the metric definition of quasiconformality outside
suitable exceptional sets implies that $f$ belongs to the Sobolev class $N_{\loc}^{1,p}(X;Y)$, where $1< p\le Q$,
and also implies one direction of the geometric definition of quasiconformality.
Unlike previous results, we only assume a pointwise version of Ahlfors $Q$-regularity,
which particularly enables various weighted spaces to be included in the theory.
Unexpectedly, we can apply this
to obtain results that are new even in the classical Euclidean setting. In particular,
in spaces including the Carnot groups,
we are able to prove the Sobolev regularity $f\in N_{\loc}^{1,Q}(X;Y)$ without the strong
assumption of the infinitesimal distortion $h_f$ belonging to $L^{\infty}(X)$.
\end{abstract}

\date{\today}
\maketitle

\section{Introduction}

Consider two metric spaces $(X,d)$ and $(Y,d_Y)$, and a mapping $f\colon X\to Y$.
For every $x\in X$ and $r>0$, one defines
\[
L_f(x,r):=\sup\{d_Y(f(y),f(x))\colon d(y,x)\le r\}
\]
and
\[
l_f(x,r):=\inf\{d_Y(f(y),f(x))\colon d(y,x)\ge r\},
\]
and then
\[
H_f(x,r):=\frac{L_f(x,r)}{l_f(x,r)};
\]
we interpret this to be $\infty$ if the denominator is zero.
A homeomorphism $f\colon X\to Y$ is (metric) quasiconformal if there is a number
$1\le H<\infty$ such that
\begin{equation}\label{eq:Hf}
H_f(x):=\limsup_{r\to 0} H_f(x,r)\le H
\end{equation}
for all $x\in X$.

In the case where $X$ and $Y$ are Ahlfors $Q$-regular spaces, with $Q>1$,
the homeomorphism $f$ is said to satisfy the \emph{analytic definition} of quasiconformality if
$f\in N^{1,Q}_{\loc}(X;Y)$ and the minimal $Q$-weak upper gradient $g_f$
to the $Q$th power
is bounded pointwise by a constant $C$ times
the Jacobian $J_f$; see Section \ref{sec:Prelis} for definitions.
Moreover,  $f$ is said to satisfy the 
\emph{geometric definition} of quasiconformality if for every family of curves $\Gamma$ in $X$, we have
\[
\frac{1}{C}\Mod_Q(\Gamma)\le \Mod_Q(f(\Gamma))\le C\Mod_Q(\Gamma)
\] 
for some constant $C\ge 1$. 
It has been a problem of wide interest to study the equivalence between these definitions,
and in particular to examine whether the metric definition implies the other definitions.
It has turned out possible to significantly relax the metric definition and still obtain at least
that $f$ is a Sobolev mapping.
Results in this direction have been proven in Euclidean and Carnot-Carath\'eodory spaces by
Gehring \cite{Ge62,Ge},
Marguilis--Mostow \cite{MaMo},
Balogh--Koskela \cite{BaKo},
Kallunki--Koskela \cite{KaKo},
Kallunki--Martio \cite{KaMa}, and
Koskela--Rogovin \cite{KoRo}.
These papers show that one does not need the condition $H_f(x)\le H$ at \emph{every}
point $x$, and that instead of $H_f$ one can consider
\[
h_f(x):=\liminf_{r\to 0} H_f(x,r).
\]
Heinonen--Koskela \cite{HK1,HK2}
and Heinonen--Koskela--Shanmugalingam--Tyson \cite[Theorem 9.8]{HKST} have
studied the equivalence between the definitions in the more general setting
of an Ahlfors $Q$-regular metric space supporting a Poincar\'e inequality.

Balogh--Koskela--Rogovin \cite{BKR} and
Williams \cite{Wi} show that
in order to go from the metric definition of quasiconformality to the fact that $f\in N_{\loc}^{1,1}(X;Y)$,
the assumption of a Poincar\'e inequality is in fact not necessary.
In \cite{LaZh} the authors of the present paper show
that it is possible to largely remove the assumption of Ahlfors regularity as well.
Now we give a similar result when $1<p\le Q$.
In fact we also cover the case $p=1$ but
in this case the assumptions needed in \cite{LaZh} are mostly even weaker than here, so this
not our main focus; the case $p=Q$ is of greatest interest in the theory of quasiconformal mappings.

Our main theorem is the following. 

\begin{theorem}\label{thm:main theorem intro}
 	Let $\Om\subset X$ be open and bounded,
	let $f\colon \Om\to f(\Om)\subset Y$ be a homeomorphism such that
	$f(\Om)$ is open and $\nu(f(\Om))<\infty$,
	and 
	\begin{itemize}
	\item[(1)] Suppose there exists a $\mu$-measurable set $E\subset\Om$ such
	that in
	$\Om\setminus E$ there exist $\mu$-measurable functions $Q(x)>1$ and $R(x)>0$ with
	\[
	\limsup_{r\to 0}\frac{\mu(B(x,r))}{r^{Q(x)}}<R(x)\liminf_{r\to 0}\frac{\nu(B(f(x),r))}{r^{Q(x)}}
	\quad \textrm{for }\mu\textrm{-a.e. }x\in \Om\setminus E.
	\]
	Suppose also that there is a Borel regular outer measure $\widetilde{\mu}\ge \mu$ on $X$ which is doubling
	within a ball $2B_0$ with $\Om\subset B_0$, and
	\[
	\limsup_{r\to 0}\frac{\widetilde{\mu}(B(x,r))}{r^{Q(x)}}<\infty,
	\ \  \liminf_{r\to 0}\frac{\nu(B(f(x),r))}{r^{Q(x)}}>0
	\ \  \textrm{for all }x\in \Om\setminus E.
	\]
		
	\item[(2)]
	Suppose $Q:=\inf_{x\in\Om\setminus E}Q(x)>1$ and let $1\le p\le Q$.
	Assume  that
	\begin{equation}\label{eq:E assumption intro}
	\Mod_p(\{\gamma\subset \Om\colon \mathcal H^1(f(\gamma\cap E))>0\})=0;
	\end{equation}
	in particular, it is enough to assume that $E$ is the union of a countable set and a set
	with $\sigma$-finite codimension $p$ Hausdorff measure.
	
	\item[(3)]
	Finally, assume that $p\le q\le Q$ and that
	\[
	\begin{cases}
	\frac{Q(\cdot)-q}{Q(\cdot)}(R(\cdot)h_f(\cdot)^{Q(\cdot)})^{q/(Q(\cdot)-q)}\in L^1(\Om\setminus E)
	\quad\textrm{if }1\le q<Q\\
	R(\cdot)^{1/Q(\cdot)}h_f(\cdot)\in L^{\infty}(\Om\setminus E)\quad\textrm{if }q=Q.
	\end{cases}
	\]
	\end{itemize}
	Then it follows that $f\in D^{p}(\Om;Y)$.
	
	In the case $p=Q= Q(x)$ for $\mu$-a.e. $x\in \Om\setminus E$,
	for every curve family $\Gamma$ in $\Om$ we also get
	\begin{equation}\label{eq:quasiconformality conclusion}
	\Mod_Q(\Gamma)\le C\Mod_Q(f(\Gamma))
	\end{equation}
	with $C=\Vert R(\cdot)h_f(\cdot)^{Q}\Vert_{L^{\infty}(\Om)}$.
	
	If $X$ is proper and supports a
	$(1,p)$-Poincar\'e inequality, and $\mu$ is doubling, then
	$f\in D^q(\Om;Y)$.
\end{theorem}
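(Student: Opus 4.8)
The plan is to promote the conclusion $f\in D^{p}(\Om;Y)$ of the first part to $f\in D^{q}(\Om;Y)$ in two steps: first show that the minimal $p$-weak upper gradient $g_f$ of $f$ belongs to $L^{q}_{\loc}(\Om)$, and then use that, in a proper doubling space supporting a $(1,q)$-Poincar\'e inequality, the existence of such an upper gradient forces $f\in D^{q}$.

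For the first step I would reuse the pointwise estimate established in the course of proving $f\in D^{p}$, namely
\[
g_f(x)^{Q(x)}\le C\,R(x)\,h_f(x)^{Q(x)}\,\mu_f'(x)\qquad\textrm{for }\mu\textrm{-a.e. }x\in\Om\setminus E,
\]
where $\mu_f'$ denotes the volume derivative at $x$ of the pullback measure $A\mapsto\nu(f(A))$; since $\mu$ is doubling and $\nu(f(\Om))<\infty$, one has $\mu_f'\in L^{1}(\Om)$ with $\int_{\Om}\mu_f'\,d\mu\le\nu(f(\Om))$. Also $g_f=0$ $\mu$-a.e.\ on $E$ (this is how $f\in D^{p}$ was extracted from the modulus hypothesis \eqref{eq:E assumption intro}), so everything reduces to $\Om\setminus E$. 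When $q<Q$, note that $Q(x)-q\ge Q-q>0$, and apply Young's inequality with the conjugate exponents $Q(x)/(Q(x)-q)$ and $Q(x)/q$ to the factorization $g_f^{q}\le (C R h_f^{Q(x)})^{q/Q(x)}(\mu_f')^{q/Q(x)}$; this gives
\[
g_f(x)^{q}\le\frac{Q(x)-q}{Q(x)}\bigl(C R(x) h_f(x)^{Q(x)}\bigr)^{q/(Q(x)-q)}+\frac{q}{Q(x)}\,\mu_f'(x).
\]
Since $q/(Q(x)-q)\le q/(Q-q)$, the constant $C^{q/(Q(x)-q)}$ is bounded, so the first term on the right lies in $L^{1}(\Om\setminus E)$ by the integrability hypothesis~(3), while the second is at most $\mu_f'\in L^{1}$; hence $g_f\in L^{q}(\Om\setminus E)$. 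When $q=Q$, the hypothesis $R^{1/Q(\cdot)}h_f\in L^{\infty}$ turns the pointwise estimate directly into $g_f^{q}\le C'(1+\mu_f')$ on $\Om\setminus E$, again integrable. Either way $g_f\in L^{q}(\Om)\subset L^{q}_{\loc}(\Om)$.

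For the second step, since $q\ge p$, H\"older's inequality promotes the $(1,p)$-Poincar\'e inequality to a $(1,q)$-Poincar\'e inequality. In a proper doubling metric measure space supporting a $(1,q)$-Poincar\'e inequality, a ($Y$-valued) mapping possessing a $p$-weak upper gradient in $L^{q}_{\loc}$ with $p\le q$ lies in $N^{1,q}_{\loc}$, the same function being a $q$-weak upper gradient; see \cite{HKST}. Applying this to $f$ with gradient $g_f$ yields $f\in D^{q}(\Om;Y)$.

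I expect the second step to be the only genuine obstacle: the $L^{q}$-integrability of $g_f$ is essentially immediate once one notices that hypothesis~(3) is exactly the right-hand side of the Young inequality above, whereas passing from a $p$-weak to a $q$-weak upper gradient really uses the geometry — properness, doubling and the Poincar\'e inequality — to guarantee that the curve family on which the upper gradient inequality could fail is $\Mod_q$-negligible, something the Poincar\'e-free argument of the first part cannot supply.
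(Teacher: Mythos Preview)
Your proposal is correct and follows the paper's route: the $L^q$-integrability of the upper gradient via the pointwise estimate and Young's inequality is exactly Proposition~\ref{prop:lip} (the paper phrases it for $\lip_f$ rather than $g_f$, and in fact this $L^q$-bound is already obtained in the course of proving $f\in D^p$, so your Step~1 is a re-derivation of something already in hand), and the upgrade from a $p$-weak upper gradient in $L^q$ to $f\in D^q$ under properness, doubling and the $(1,p)$-Poincar\'e inequality is precisely Proposition~\ref{prop:p to q}. One cosmetic difference worth noting: Proposition~\ref{prop:p to q} does not assert that the same $g$ becomes a $q$-weak upper gradient, but instead manufactures a new genuine upper gradient $3C[Mg^p]^{1/p}\in L^q$ via a Haj\l asz--Sobolev telescoping and the Hardy--Littlewood maximal theorem.
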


Here $D^q(\Om;Y)$ is the Dirichlet space, that is, $f$ is not required to be in $L^q(\Om;Y)$.

While rather technical in its full generality, the theorem will be seen to have
several corollaries that improve on known results;
in Corollary \ref{cor:basic} we make a comparison with Williams \cite[Corollary 1.3]{Wi}.
The main difference with Williams and other previous results
is that instead of local Ahlfors $Q$-regularity we merely
assume the pointwise conditions of (1), as well as the fact that $\mu$
is controlled by a doubling measure $\widetilde{\mu}$.
Note that the ``dimension'' $Q(x)$ as well as the ``density'' $R(x)$
are allowed to vary from
point to point. Mostly we are interested in the case where $Q(x)$ is constant,
but the flexibility provided by the function $R(x)$
makes it easy to include weighted spaces in the theory.

Since we work in spaces without a specific dimension,
we consider a codimension $p$ Hausdorff measure, which however
reduces, up to a constant,  to the $(Q-p)$-dimensional
Hausdorff measure in the Ahlfors $Q$-regular case.
The set $E$ can also contain any countable set; in our generality
even a single point could have infinite codimension $p$ Hausdorff measure.

Similarly to Williams \cite{Wi} and several other previous works,
in \cite{LaZh} we relied on constructing a sequence
of ``almost upper gradients'' $\{g_i\}_{i=1}^{\infty}$.
Since we worked in the case $p=1$, we then had to prove the sequence to be \emph{equi-integrable}
in order to find a weakly converging subsequence.
In the present paper we mostly deal with the case $1<p\le Q$ and so our methods are quite different
from \cite{LaZh}, but partially in the same vein as those of Williams \cite{Wi}.
The key step is to prove boundedness of the sequence $\{g_i\}_{i=1}^{\infty}$
in $L^p$, and then use reflexivity to find a weakly converging subsequence,
and to obtain a $p$-weak upper gradient of $f$ at the limit.

The inequality \eqref{eq:quasiconformality conclusion} is one direction
of the geometric definition of quasiconformality.
There is no hope in general to obtain the opposite estimate
\[
\Mod_Q(f(\Gamma))\le C\Mod_Q(\Gamma),
\]
essentially because all of the assumptions
of Theorem \ref{thm:main theorem intro} still hold if we make $\nu$ bigger,
but this tends to increase $\Mod_Q(f(\Gamma))$.
For a specific counterexample, see Example \ref{ex:plane}.

From Theorem \ref{thm:main theorem intro}, we obtain the following corollary for weighted spaces.

\begin{corollary}\label{cor:weighted}
	Let $1\le p\le q\le Q$.
	Let $(X_0,d,\mu_0)$ and $(Y_0,d_Y,\nu_0)$ be Ahlfors $Q$-regular spaces, with $Q>1$.
	Let $X$ and $Y$ be the same metric spaces
	but equipped with the weighted measures $d\mu=w\,d\mu_0$ and $d\nu=w_Y\,d\nu_0$,
	where $w_Y>0$ is represented by \eqref{eq:wY representative}.
	Let $\Om\subset X$ be open and bounded and let
	$f\colon \Om\to f(\Om)\subset Y$ be a homeomorphism with
	$f(\Om)$ open and $\nu(f(\Om))<\infty$.
	Suppose $w\le \widetilde{w}$ for some weight $\widetilde{w}$ for which
	$d\widetilde{\mu}:=\widetilde{w}\,d\mu_0$ is doubling,
	and suppose there is a set $E\subset \Om$ that is the union of a countable set and a set
	with $\sigma$-finite $\widetilde{\mathcal H}^p$-measure, and
	\[
	\limsup_{r\to 0}\frac{\widetilde{\mu}(B(x,r))}{r^{Q}}<\infty
	\quad \textrm{for all }x\in \Om\setminus E.
	\]
	Finally assume that $h_f<\infty$ in $\Om\setminus E$ and that
	\[
	\begin{cases}
	\left(\frac{w(\cdot)}{w_Y(f(\cdot))}h_f(\cdot)^{Q}\right)^{q/(Q-q)}\in L^1(\Om)\quad\textrm{if }1\le q<Q;\\
	\frac{w(\cdot)}{w_Y(f(\cdot))}h_f(\cdot)^{Q}\in L^{\infty}(\Om)\quad\textrm{if }q=Q.
	\end{cases}
	\]
	Then $f\in D^{p}(\Om;Y)$.
	In the case $p=Q$, for every curve family $\Gamma$ in $\Om$ we also get
	\[
	\Mod_Q(\Gamma)\le C\Mod_Q(f(\Gamma))
	\]
	with $C=\Vert w(\cdot) w_Y(f(\cdot))^{-1} h_f(\cdot)^{Q}\Vert_{L^{\infty}(\Om)}$.
	
	If $X$ is proper and supports a
	$(1,p)$-Poincar\'e inequality, and $\mu$ is doubling, then
	$f\in D^q(\Om;Y)$.
\end{corollary}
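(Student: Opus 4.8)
The plan is to obtain Corollary \ref{cor:weighted} as a direct application of Theorem \ref{thm:main theorem intro} with the constant exponent $Q(x)\equiv Q$ and with
\[
R(x):=\frac{w(x)}{w_Y(f(x))},
\]
and to verify that each hypothesis of the theorem follows from the assumptions of the corollary. All the standing assumptions transfer verbatim: $\Om$ is open and bounded, $f\colon\Om\to f(\Om)$ is a homeomorphism onto an open set, and $\nu(f(\Om))<\infty$. For the doubling comparison measure I take $\widetilde\mu:=\widetilde w\,\mu_0$: it is a Borel regular outer measure with $\widetilde\mu\ge\mu$ (since $\widetilde w\ge w$), and it is doubling on all of $X_0$, hence in particular within any ball $2B_0\supset\Om$.

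The substantive point is hypothesis (1). Because $\mu_0$ is Ahlfors $Q$-regular it is doubling, so the Lebesgue differentiation theorem gives $\vint{B(x,r)}w\,d\mu_0\to w(x)$ as $r\to 0$ for $\mu_0$-a.e., hence $\mu$-a.e., $x$; writing $\mu(B(x,r))=\bigl(\vint{B(x,r)}w\,d\mu_0\bigr)\mu_0(B(x,r))$ and using the upper Ahlfors bound $\mu_0(B(x,r))\le C r^Q$ yields
\[
\limsup_{r\to 0}\frac{\mu(B(x,r))}{r^{Q}}=w(x)\limsup_{r\to 0}\frac{\mu_0(B(x,r))}{r^{Q}}\le C\,w(x)
\]
for $\mu$-a.e. $x$. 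In the target, the pointwise representative of $w_Y$ fixed in \eqref{eq:wY representative} is precisely the one making $\liminf_{r\to 0}\frac{\nu(B(y,r))}{\nu_0(B(y,r))}\ge w_Y(y)$ at \emph{every} $y$; together with the lower Ahlfors bound $\nu_0(B(y,r))\ge c\,r^{Q}$ this gives
\[
\liminf_{r\to 0}\frac{\nu(B(f(x),r))}{r^{Q}}\ge c\,w_Y(f(x))>0\qquad\textrm{for all }x\in\Om\setminus E,
\]
which supplies the second half of the ``for all $x$'' part of (1); the first half, $\limsup_{r\to 0}\frac{\widetilde\mu(B(x,r))}{r^{Q}}<\infty$ on $\Om\setminus E$, is assumed outright. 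Comparing the two displays, the main strict inequality of (1) holds for $\mu$-a.e. $x$ with $R(x)$ a fixed multiple (depending only on the Ahlfors data) of $w(x)/w_Y(f(x))$, and for the qualitative conclusion $f\in D^p(\Om;Y)$ this is all that is needed. Promoting this to $R(x)=w(x)/w_Y(f(x))$ exactly --- which is what later produces the \emph{sharp} constant in the modulus inequality --- is the one genuinely delicate point: it requires that the $r^{Q}$-densities of $\mu$ at $x$ and of $\nu$ at $f(x)$ be comparable without a stray constant, and this is exactly what the precise choice in \eqref{eq:wY representative} is arranged to deliver.

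Hypotheses (2) and (3) are then routine. For (2): $Q>1$ and $1\le p\le Q$ are assumed, and for \eqref{eq:E assumption intro} I invoke the ``in particular'' clause of Theorem \ref{thm:main theorem intro} --- since $E$ is a countable set together with a set of $\sigma$-finite $\widetilde{\mathcal H}^p$-measure and $\mu\le\widetilde\mu$, the codimension $p$ Hausdorff measure entering the theorem is dominated by $\widetilde{\mathcal H}^p$, so $E$ retains the required form. For (3): with $Q(x)\equiv Q$ and $R(x)$ a fixed multiple of $w(x)/w_Y(f(x))$, the condition $\tfrac{Q-q}{Q}(R h_f^{Q})^{q/(Q-q)}\in L^1(\Om\setminus E)$ for $1\le q<Q$ is equivalent to $(w\,w_Y(f)^{-1}h_f^{Q})^{q/(Q-q)}\in L^1(\Om)$, and $R^{1/Q}h_f\in L^\infty$ for $q=Q$ is equivalent to $w\,w_Y(f)^{-1}h_f^{Q}\in L^\infty(\Om)$; both, together with $h_f<\infty$ on $\Om\setminus E$, are among the hypotheses. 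Theorem \ref{thm:main theorem intro} then yields $f\in D^p(\Om;Y)$; when $p=Q$ it gives $\Mod_Q(\Gamma)\le C\Mod_Q(f(\Gamma))$ with $C=\Vert R h_f^{Q}\Vert_{L^\infty(\Om)}=\Vert w\,w_Y(f)^{-1}h_f^{Q}\Vert_{L^\infty(\Om)}$ (here using the exact choice of $R$ from the previous paragraph); and its final clause delivers $f\in D^q(\Om;Y)$ once $X$ is proper, supports a $(1,p)$-Poincar\'e inequality, and $\mu$ is doubling. Thus the only real obstacle is the fine-tuning of $R$ flagged above; everything else is bookkeeping built on the Lebesgue differentiation theorem and the two-sided Ahlfors bounds.
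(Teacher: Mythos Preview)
Your approach matches the paper's: apply Theorem~\ref{thm:main theorem intro} with $Q(x)\equiv Q$ and $R(x)$ a multiple of $w(x)/w_Y(f(x))$, checking the hypotheses via Ahlfors regularity, Lebesgue differentiation for $w$, and the representative~\eqref{eq:wY representative} for $w_Y$. The paper's own proof is the single line ``choose $R(x):=(1+\eps)\,w(x)/w_Y(f(x))$ for arbitrarily small $\eps>0$''; the factor $(1+\eps)$ is the device that converts the density comparison into the \emph{strict} inequality required in hypothesis~(1), and letting $\eps\to 0$ at the end is how the stated constant $C=\|w\,w_Y(f)^{-1}h_f^{Q}\|_{L^\infty(\Om)}$ in the modulus inequality is recovered. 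You instead absorb an Ahlfors-dependent constant into $R$, which is perfectly adequate for the qualitative conclusions $f\in D^p(\Om;Y)$ and $f\in D^q(\Om;Y)$. Your one overstatement is the claim that the representative~\eqref{eq:wY representative} is what ``delivers'' the sharp choice of $R$: that representative guarantees $\liminf_{r\to 0}\nu(B(y,r))/\nu_0(B(y,r))\ge w_Y(y)$ at \emph{every} $y$ (which you use correctly), but it does nothing to match the unweighted densities $\mu_0(B(x,r))/r^Q$ and $\nu_0(B(f(x),r))/r^Q$; it is the $(1+\eps)$-trick, not the representative, that the paper uses to pass from a non-strict comparison to the strict one and then back to the exact constant.
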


We observe that quite general weights $w$ are allowed in the space $X$,
and thus we can include many spaces in the theory where the measure $\mu$ is not Ahlfors regular or even doubling.
Moreover, contrary to all previous works, to the best of our knowledge,
we do not require $h_f$ to be essentially bounded in order to get
$f\in D^{Q}(\Om;Y)$ or $f\in N^{1,Q}(\Om;Y)$.
Instead, $h_f$ can be large in regions where the weight $w$ is small.

In the space $Y$, even more general weights are allowed.
Note that the choice of weight $w_Y$ does not change the classes $D^{p}(\Om;Y)$
or $N^{1,p}(\Om;Y)$, whose definition only depends on $Y$ as a metric space.
Thus we can choose any weight $w_Y$ that is convenient for us.
Already in the classical setting of the unweighted plane, there are simple examples of $N^{1,2}$-mappings
$f\colon \R^2\to\R^2$ for which $h_f$ is not in $L_{\loc}^{\infty}(\R^2)$,
and so the previous results, such as those of \cite{BKR,Wi},
do not tell us that $f\in N^{1,2}_{\loc}(\R^2;\R^2)$.
However, we can detect the Sobolev property of many such mappings
from the next corollary, simply by equipping
$Y$ with a suitable weight $w_Y$;
see Example \ref{ex:plane}.

\begin{corollary}\label{cor:Euclidean}
	Let $1\le q\le n\in\N\setminus\{1\}$.
	Let $w_Y\in L^1_{\loc}(\R^n)$ be represented by \eqref{eq:wY representative}, with
	$w_Y>0$, and $d\nu:=w_Y\,d\mathcal L^n$.
	Let $\Om\subset \R^n$ be open and bounded and let
	$f\colon \Om\to f(\Om)\subset \R^n$ be a homeomorphism with
	$f(\Om)$ open and $\nu(f(\Om))<\infty$.
	Suppose there is a set $E\subset \Om$ that has $\sigma$-finite $\mathcal H^{n-1}$-measure,
	and $h_f<\infty$ in $\Om\setminus E$.
	Finally assume that
	\[
	\begin{cases}
	\left(\frac{h_f(\cdot)^{n}}{w_Y(f(\cdot))}\right)^{q/(n-q)}\in L^1(\Om)\quad\textrm{if }1\le q<n;\\
	\frac{h_f(\cdot)^{n}}{w_Y(f(\cdot))}\in L^{\infty}(\Om)\quad\textrm{if }q=n.
	\end{cases}
	\]
	Then $f\in D^{q}(\Om;\R^n)$.
	In the case $q=n$, for every curve family $\Gamma$ in $\Om$ we have
	\[
	\Mod_n(\Gamma)\le C\Mod_n(f(\Gamma))
	\]
	with $C=\Vert w_Y(f(\cdot))^{-1} h_f(\cdot)^{n}\Vert_{L^{\infty}(\Om)}$.
\end{corollary}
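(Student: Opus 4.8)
The plan is to read Corollary~\ref{cor:Euclidean} off Corollary~\ref{cor:weighted} in the Euclidean case, the only genuinely new point being the modulus inequality, which I obtain from one further application of Theorem~\ref{thm:main theorem intro}. First I would apply Corollary~\ref{cor:weighted} with $X_0=Y_0=\R^n$, $d=d_Y$ the Euclidean metric, $\mu_0=\nu_0=\mathcal L^n$ (so both spaces are Ahlfors $n$-regular and $Q=n>1$), trivial weights $w\equiv\widetilde w\equiv 1$ (so $\mu=\widetilde\mu=\mathcal L^n$ is doubling and $w\le\widetilde w$ is trivial), $p=1$, and the given $q$, $w_Y$, $\Om$, $f$, $E$. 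All hypotheses then reduce to what is assumed: $\limsup_{r\to0}\widetilde\mu(B(x,r))/r^{n}=\omega_n<\infty$ at every $x\in\Om$; $h_f<\infty$ on $\Om\setminus E$; $f(\Om)$ open and $\nu(f(\Om))<\infty$; and, since $w\equiv1$, the $L^1$ (resp.\ $L^\infty$, when $q=n$) integrability condition is exactly the stated one. Moreover the codimension~$1$ Hausdorff measure built from $\widetilde\mu=\mathcal L^n$ is a constant multiple of $\mathcal H^{n-1}$, so ``$E$ of $\sigma$-finite $\mathcal H^{n-1}$-measure'' means precisely ``$E$ of $\sigma$-finite $\widetilde{\mathcal H}^{1}$-measure'' (for $n\ge2$ the separate countable part in Corollary~\ref{cor:weighted} is automatic, countable sets being $\mathcal H^{n-1}$-null). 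Corollary~\ref{cor:weighted} with $p=1$ thus gives $f\in D^{1}(\Om;\R^n)$, and since $\R^n$ is proper, $\mathcal L^n$ is doubling, and $\R^n$ supports a $(1,1)$-Poincar\'e inequality, its final clause upgrades this to $f\in D^{q}(\Om;\R^n)$.

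\emph{The modulus inequality when $q=n$.} This does not follow from Corollary~\ref{cor:weighted} with $p=Q=n$, because there the case $p=Q$ forces $E$ to be essentially countable, stronger than $\sigma$-finite $\mathcal H^{n-1}$-measure. Instead I would apply Theorem~\ref{thm:main theorem intro} directly with $\mu=\mathcal L^n$, $\nu=w_Y\,\mathcal L^n$, $\widetilde\mu=\mathcal L^n$, $Q(x)\equiv n$, $p=Q=n$, and $R(x):=(1+\eps)/w_Y(f(x))$ for fixed $\eps>0$. By the defining property of the representative~\eqref{eq:wY representative}, $\nu(B(y,r))/(\omega_n r^{n})\to w_Y(y)\in(0,\infty)$ as $r\to0$ at every $y$, so condition~(1) reduces to $\omega_n<(1+\eps)\omega_n$ together with the automatic finiteness and positivity statements, and condition~(3) in the case $q=Q$ (namely $R(\cdot)^{1/n}h_f(\cdot)\in L^\infty(\Om)$) is equivalent to the assumed $h_f^{n}/w_Y(f(\cdot))\in L^\infty(\Om)$. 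Theorem~\ref{thm:main theorem intro} then yields $\Mod_n(\Gamma)\le(1+\eps)\Vert w_Y(f(\cdot))^{-1}h_f(\cdot)^{n}\Vert_{L^\infty(\Om)}\Mod_n(f(\Gamma))$ for every $\eps>0$, and letting $\eps\downarrow0$ gives the stated constant.

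\emph{The main obstacle.} What remains to be checked — and where I expect the real work — is the exceptional-set hypothesis~\eqref{eq:E assumption intro} of Theorem~\ref{thm:main theorem intro} with $p=n$, namely $\Mod_n(\{\gamma\subset\Om\colon\mathcal H^1(f(\gamma\cap E))>0\})=0$; for $p=n$ the assumption on $E$ is not strong enough to invoke the criterion in item~(2), precisely because of the image set $f(\gamma\cap E)$. Here I would bootstrap from the Sobolev regularity just established. Since $E$ has $\sigma$-finite $\mathcal H^{n-1}$-measure it is $\mathcal L^n$-null, so testing with $+\infty\cdot\chi_E$ shows $\mathcal H^1(\gamma\cap E)=0$ for $\Mod_n$-a.e.\ $\gamma$; and since $f\in D^{n}(\Om;\R^n)$, the map $f$ is absolutely continuous along $\Mod_n$-a.e.\ curve, hence along such curves it maps the $\mathcal H^1$-null set $\gamma\cap E$ to an $\mathcal H^1$-null set. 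Intersecting the two $\Mod_n$-null exceptional families gives~\eqref{eq:E assumption intro} for $p=n$, and with it the modulus inequality; this completes the argument.
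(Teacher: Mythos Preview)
Your argument is correct. For the first conclusion $f\in D^q(\Om;\R^n)$ you proceed exactly as the paper does, specializing Corollary~\ref{cor:weighted} with $w=\widetilde w\equiv 1$ and $p=1$, and then invoking the Poincar\'e inequality to upgrade to $D^q$.

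For the modulus inequality when $q=n$, both you and the paper rely on the same key bootstrap: use the already-established membership $f\in D^n(\Om;\R^n)$ to obtain absolute continuity of $f$ along $n$-a.e.\ curve. You use this to verify the exceptional-set hypothesis~\eqref{eq:E assumption intro} for $p=n$ (via $\mathcal L^n(E)=0$ and the Lusin $N$-property on curves) and then re-apply Theorem~\ref{thm:main theorem intro} with $R(x)=(1+\eps)/w_Y(f(x))$, letting $\eps\downarrow 0$. The paper instead bypasses Theorem~\ref{thm:main theorem intro} at this stage: it applies Proposition~\ref{prop:lip} directly to obtain $\lip_f(x)^n\le \Vert w_Y(f(\cdot))^{-1}h_f(\cdot)^n\Vert_{L^\infty(\Om)}\,J_f(x)$ a.e., uses absolute continuity on curves together with Lemma~\ref{lem:abs cont} to conclude that $\lip_f$ itself is an $n$-weak upper gradient, and then feeds the resulting inequality $g_f^n\le C J_f$ into Williams' Theorem~\ref{thm:Williams QC theorem}. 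The paper's route is slightly more economical---it avoids re-running the whole machinery of Theorem~\ref{thm:main theorem intro} and the limit in $\eps$---but your route has the virtue of isolating precisely which hypothesis of the main theorem was missing and showing how the Sobolev regularity fills it in. Both arrive at the sharp constant $C=\Vert w_Y(f(\cdot))^{-1}h_f(\cdot)^n\Vert_{L^\infty(\Om)}$.
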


In Carnot groups, we obtain an analogous result.

\begin{corollary}\label{cor:Carnot}
	Let $G$ be a Carnot group of homogeneous dimension $Q>1$.
	Let $w_Y\in L^1_{\loc}(G)$ be represented by \eqref{eq:wY representative}, with
	$w_Y>0$, and $d\nu:=w_Y\,d\mathcal L^Q$.
	Let $\Om\subset \textcolor{blue}{G}$ be open and bounded and let
	$f\colon \Om\to f(\Om)\subset G$ be a homeomorphism with
	$f(\Om)$ open and $\nu(f(\Om))<\infty$.
	Suppose there is a set $E\subset \Om$ that has $\sigma$-finite $\mathcal H^{Q-1}$-measure,
	and $h_f<\infty$ in $\Om\setminus E$.
	Finally assume that
	\[
	\begin{cases}
	\left(\frac{h_f(\cdot)^{Q}}{w_Y(f(\cdot))}\right)^{q/(Q-q)}\in L^1(\Om)\quad\textrm{if }1\le q<Q;\\
	\frac{h_f(\cdot)^{Q}}{w_Y(f(\cdot))}\in L^{\infty}(\Om)\quad\textrm{if }q=Q.
	\end{cases}
	\]
	Then $f\in D^{q}(\Om;G)$.
	In the case $q=Q$, for every curve family $\Gamma$ in $\Om$ we have
	\[
	\Mod_Q(\Gamma)\le C\Mod_Q(f(\Gamma))
	\]
	with $C=\Vert w_Y(f(\cdot))^{-1} h_f(\cdot)^{Q}\Vert_{L^{\infty}(\Om)}$.
\end{corollary}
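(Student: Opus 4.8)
The proof follows the same lines as the Euclidean case (Corollary~\ref{cor:Euclidean}): we read Corollary~\ref{cor:Carnot} off from Corollary~\ref{cor:weighted} by taking both Ahlfors regular spaces to be $G$ itself. Equip $G$ with the Carnot--Carath\'eodory distance $d$ (or any homogeneous distance) and with Haar measure $\mathcal L^Q$. By left-invariance and the dilation structure one has $\mathcal L^Q(B(x,r))=c_0 r^Q$ for every $x\in G$ and $r>0$, so $(G,d,\mathcal L^Q)$ is Ahlfors $Q$-regular; moreover $G$ is proper, $\mathcal L^Q$ is doubling, and $G$ supports a $(1,1)$-Poincar\'e inequality (classical for Carnot groups with the Carnot--Carath\'eodory metric), hence a $(1,p)$-Poincar\'e inequality for every $p\ge1$. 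We apply Corollary~\ref{cor:weighted} with $(X_0,d,\mu_0)=(Y_0,d_Y,\nu_0)=(G,d,\mathcal L^Q)$, domain weight $w\equiv1$ and $\widetilde w\equiv1$ (so $\mu=\widetilde\mu=\mathcal L^Q$ is doubling and $\widetilde\mu\ge\mu$), and $w_Y$, $\nu=w_Y\,d\mathcal L^Q$ as given.

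With these choices, the hypotheses of Corollary~\ref{cor:weighted} for the index $p=1$ are immediate consequences of those of Corollary~\ref{cor:Carnot}: $w\le\widetilde w$ trivially; $\limsup_{r\to0}\widetilde\mu(B(x,r))/r^Q=c_0<\infty$ for all $x$; since $\widetilde\mu=\mathcal L^Q$ is Ahlfors $Q$-regular, the codimension~$1$ Hausdorff measure $\widetilde{\mathcal H}^1$ is comparable with $\mathcal H^{Q-1}$, so $E$, being $\sigma$-finite for $\mathcal H^{Q-1}$, is $\sigma$-finite for $\widetilde{\mathcal H}^1$ and is admissible for $p=1$; $h_f<\infty$ in $\Om\setminus E$ by assumption; and, as $w\equiv1$, the integrability hypotheses on $w_Y(f(\cdot))^{-1}h_f(\cdot)^Q$ are exactly those of Corollary~\ref{cor:weighted} with $p=1$. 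Hence $f\in D^1(\Om;G)$, and the final clause of Corollary~\ref{cor:weighted}, applicable because $G$ is proper, $\mathcal L^Q$ is doubling, and $G$ supports a $(1,1)$-Poincar\'e inequality, promotes this to $f\in D^q(\Om;G)$ for the given $q\in[1,Q]$.

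It remains to prove the modulus inequality when $q=Q$, which we obtain by invoking Theorem~\ref{thm:main theorem intro} directly with $p=q=Q$ and $Q(x)\equiv Q$. Hypothesis~(3) in this case is precisely the assumed bound $w_Y(f(\cdot))^{-1}h_f(\cdot)^Q\in L^\infty(\Om)$ (a factor $1+\eps$ being harmless in $L^\infty$), and hypothesis~(1) holds upon taking the density function $R(x)=(1+\eps)/w_Y(f(x))$ with $\eps>0$ arbitrary: indeed $\limsup_{r\to0}\mu(B(x,r))/r^Q=c_0$, while by the choice of representative \eqref{eq:wY representative} one has $\liminf_{r\to0}\nu(B(f(x),r))/r^Q\ge c_0 w_Y(f(x))>0$ for all $x\in\Om\setminus E$, so $c_0<R(x)\liminf_{r\to0}\nu(B(f(x),r))/r^Q$. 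The one remaining point, and the main obstacle, is to verify \eqref{eq:E assumption intro} for $p=Q$, i.e. $\Mod_Q(\{\gamma\subset\Om\colon\mathcal H^1(f(\gamma\cap E))>0\})=0$: the assumption that $E$ is $\sigma$-finite for $\mathcal H^{Q-1}$ only yields \eqref{eq:E assumption intro} for $p=1$, so a separate argument is needed here.

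We verify \eqref{eq:E assumption intro} for $p=Q$ using the membership $f\in D^Q(\Om;G)$ just established. First, $\sigma$-finiteness of $\mathcal H^{Q-1}\mres E$ forces $\mathcal L^Q(E)=0$, so the Borel function equal to $+\infty$ on $E$ and $0$ elsewhere has zero $L^Q(\Om,\mu)$-norm and is admissible for $\{\gamma\colon\mathcal H^1(\gamma\cap E)>0\}$, which is therefore $\Mod_Q$-negligible. Second, $f\in D^Q(\Om;G)$ is absolutely continuous along $\Mod_Q$-a.e.\ curve, and a standard argument (arc-length parametrization together with the length formula for rectifiable curves) shows that for $\Mod_Q$-a.e.\ curve $\gamma$ the condition $\mathcal H^1(\gamma\cap E)=0$ implies $\mathcal H^1(f(\gamma\cap E))=0$. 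Hence the family in \eqref{eq:E assumption intro} for $p=Q$ is, up to a $\Mod_Q$-negligible family, contained in a $\Mod_Q$-negligible family, so it is itself $\Mod_Q$-negligible. Theorem~\ref{thm:main theorem intro} then applies with $p=q=Q$ and gives, for every curve family $\Gamma$ in $\Om$, $\Mod_Q(\Gamma)\le C_\eps\Mod_Q(f(\Gamma))$ with $C_\eps=\Vert R(\cdot)h_f(\cdot)^Q\Vert_{L^\infty(\Om)}=(1+\eps)\Vert w_Y(f(\cdot))^{-1}h_f(\cdot)^Q\Vert_{L^\infty(\Om)}$; letting $\eps\to0$ yields the asserted constant and completes the proof.
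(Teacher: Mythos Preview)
Your derivation of $f\in D^q(\Om;G)$ via Corollary~\ref{cor:weighted} with $p=1$ and the Poincar\'e upgrade is exactly the paper's argument.

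For the modulus inequality the paper takes a shorter route than you do. Rather than returning to Theorem~\ref{thm:main theorem intro} with $p=Q$ (which forces you to verify \eqref{eq:E assumption intro} at the level $p=Q$ and to carry the auxiliary $\eps>0$ to the end), the paper uses directly that Proposition~\ref{prop:lip} gives $\lip_f(x)^Q\le C\,J_f(x)$ a.e.\ with $C=\Vert w_Y(f(\cdot))^{-1}h_f(\cdot)^Q\Vert_{L^\infty(\Om)}$; since $f\in D^Q(\Om;G)$ is absolutely continuous on $Q$-a.e.\ curve, Lemma~\ref{lem:abs cont} shows $\lip_f$ is a $Q$-weak upper gradient, hence $g_f^Q\le C\,J_f$ a.e., and Theorem~\ref{thm:Williams QC theorem} yields the modulus inequality with the exact constant $C$ immediately. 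Your detour through Theorem~\ref{thm:main theorem intro} is correct but redundant: the key step in both arguments is the same, namely using the $D^Q$-membership to get absolute continuity on $Q$-a.e.\ curve.

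One small imprecision in your write-up: the implication ``$\mathcal H^1(\gamma\cap E)=0\Rightarrow\mathcal H^1(f(\gamma\cap E))=0$'' does not follow from absolute continuity of $f\circ\gamma$ alone, because $\gamma$ need not be injective and one needs $\mathcal L^1(\gamma^{-1}(E))=0$ rather than $\mathcal H^1(\gamma\cap E)=0$. This is harmless, since your first step (admissibility of $\infty\cdot\ch_E$) actually yields the stronger conclusion $\mathcal L^1(\gamma^{-1}(E))=0$ for $Q$-a.e.\ $\gamma$, after which the Lusin $N$-property of the absolutely continuous curve $f\circ\gamma$ gives $\mathcal H^1(f(\gamma\cap E))=0$ as you want.
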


After giving definitions and notation in Section \ref{sec:Prelis},
we study the exceptional set $E$ in Section \ref{sec:exceptional}.
In Section \ref{sec:preliminary} we study absolute continuity on curves and further preliminary
results, and then in Section \ref{sec:proofs} we prove the results given here in the Introduction.
Finally, in Section \ref{sec:examples} we give examples and applications of our main results.

\section{Definitions and notation}\label{sec:Prelis}

Throughout the paper, we consider two metric measure spaces
$(X,d,\mu)$ and $(Y,d_Y,\nu)$, where $\mu$ and $\nu$ are Borel regular outer measures,
such that the measure of every ball is finite in both spaces.
We understand balls $B(x,r)$, with $x\in X$ and $0<r<\infty$, to be open.
We also assume $X$ to be connected and $Y$ to be separable.
To avoid certain pathologies, we assume that $X$ consists of at least 2 points.

We say that $X$ is metric doubling with constant $M\in\N$ if
every ball $B(x,r)$ can be covered by $M$ balls of radius $r/2$.
This definition works also for subsets $A\subset X$, by considering $(A,d)$ as a metric space.

We will often work also with another Borel regular outer measure $\widetilde{\mu}$ on $X$, which
we assume to satisfy a doubling condition.
We say that $\widetilde{\mu}$ is doubling with constant $C_d\ge 1$ within an open set
$W\subset X$ if
\[
0<\widetilde{\mu}(B(x,2r))\le C_d\widetilde{\mu}(B(x,r))<\infty
\]
for every ball $B(x,r)\subset W$.
For a ball $B=B(x,r)$, we sometimes use the abbreviation $2B=B(x,2r)$;
note that in metric spaces, a ball (as a set) does not necessarily have a unique center point and radius,
but when using this abbreviation we will understand that these have been prescribed.

Given $Q\ge 1$, we say that $\mu$ is locally Ahlfors $Q$-regular 
if for every $z\in X$ there is $R>0$
and a constant $C_A\ge 1$ such that
\[
C_A^{-1}r^Q\le \mu(B(x,r))\le C_A r^Q
\]
for all $x\in B(z,R)$ and $0<r\le R$.

If these conditions hold for every $x\in X$ and $0<r<\diam X$ with $C_d$ (resp. $C_A$)
replaced by a universal constant,
we say that $\mu$ is doubling, or that $\mu$ (or $X$) is Ahlfors $Q$-regular.

Let $\Om\subset X$ always be an open set.
If $f\colon \Om\to f(\Om)\subset Y$ is a homeomorphism with $f(\Om)$ open, 
the pull-back of the measure $\nu$ is the measure on $\Om$ given by
\[
f_\#\nu(D):=\nu(f(D))
\]
for every Borel set $D\subset \Om$.
Note that since $f$ is a homeomorphism, $f_\#\nu$ defines a Borel measure.
The Jacobian of $f$ is then defined by
\begin{equation}\label{eq:Jacobian}
J_f(x):=\lim_{r\to 0}\frac{\nu(f(B(x,r)))}{\mu(B(x,r))}
=\lim_{r\to 0}\frac{f_\#\nu(B(x,r))}{\mu(B(x,r))}
\end{equation}
at every $x\in \Om$ where the limit exists.

The $n$-dimensional Lebesgue measure is denoted by $\mathcal L^n$.

The $s$-dimensional Hausdorff content is denoted by $\mathcal H_R^s$, with $R>0$ and $s\ge 0$,
and the $s$-dimensional Hausdorff measure is denoted by $\mathcal H^s$;
these definitions extend automatically to metric spaces.

For a mapping $f\colon X\to Y$ (resp. $f\colon [a,b]\to Y$), we define
\begin{equation}\label{eq:lip}
\lip_f(x):=\liminf_{r\to 0}\sup_{y\in B(x,r)}\frac{d_Y(f(y),f(x))}{r},\quad x\in X\quad\textrm{(resp. $x\in\R$)}.
\end{equation}

This is easily seen to be a Borel function.

A continuous mapping from a compact interval into $X$ is said to be a rectifiable curve if it has finite length.
A rectifiable curve $\gamma$ always admits an arc-length parametrization,
so that we get a curve $\gamma\colon [0,\ell_{\gamma}]\to X$
(for a proof, see e.g. \cite[Theorem 3.2]{Haj}).
We will only consider curves that are rectifiable and
arc-length parametrized.
If $\gamma\colon [0,\ell_{\gamma}]\to X$ is a curve and
$g\colon X\to [0, \infty]$ is a Borel function, we define
\[
\int_{\gamma} g\,ds:=\int_0^{\ell_{\gamma}}g(\gamma(s))\,ds.
\]

We will always assume that $1\le p<\infty$, though often we will specify a more restricted range for $p$.
The $p$-modulus of a family of curves $\Gamma$ is defined by
\[
\Mod_{p}(\Gamma):=\inf\int_{X}\rho^p\, d\mu,
\]
where the infimum is taken over all nonnegative Borel functions $\rho \colon X\to [0, \infty]$
such that $\int_{\gamma}\rho\,ds\ge 1$ for every curve $\gamma\in\Gamma$.
If a property holds apart from a curve family with zero $p$-modulus, we say that it holds for
$p$-a.e. curve.

Recall that $\Om\subset X$ is always an open set.
Next we give (special cases of) Mazur's lemma and Fuglede's lemma, see e.g. \cite[Theorem 3.12]{Rud}
and \cite[Lemma 2.1]{BB}, respectively.

\begin{lemma}\label{lem:Mazur lemma}
	Let $\{g_i\}_{i\in\N}$ be a sequence
	with $g_i\to g$ weakly in $L^p(\Om)$.
	Then there exist convex combinations $\widehat{g}_i:=\sum_{j=i}^{N_i}a_{i,j}g_j$,
	for some $N_i\in\N$,
	such that $\widehat{g}_i\to g$ in $L^p(\Om)$.
\end{lemma}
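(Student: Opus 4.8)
The plan is to deduce this from the Hahn--Banach separation theorem, via the classical principle that a convex subset of a normed space has the same closure in the norm and weak topologies. Concretely, for each fixed $i\in\N$ I would consider the set of all finite convex combinations of the tail $\{g_j\}_{j\ge i}$,
\[
C_i:=\Bigl\{\textstyle\sum_{j=i}^{N}a_j g_j\colon N\ge i,\ a_j\ge 0,\ \sum_{j=i}^{N}a_j=1\Bigr\}.
\]
This $C_i$ is a convex set, and its elements are precisely the admissible functions $\widehat g_i$. Thus it suffices to show that $g$ lies in the $L^p(\Om)$-norm closure $\overline{C_i}$ for every $i$: choosing $\widehat g_i\in C_i$ with $\Vert \widehat g_i-g\Vert_{L^p(\Om)}<1/i$ then produces a sequence with $\widehat g_i\to g$ strongly in $L^p(\Om)$, and each $\widehat g_i$ automatically has the stated form.

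First I would note that $g$ belongs to the \emph{weak} closure of $C_i$, since $g_j\in C_i$ for every $j\ge i$ and $g_j\to g$ weakly in $L^p(\Om)$. The crux is to upgrade this to membership in the norm closure $\overline{C_i}$, which is again convex and is now norm-closed. Arguing by contradiction, suppose $g\notin\overline{C_i}$. Then $\overline{C_i}$ is a nonempty closed convex set not containing $g$, so the Hahn--Banach separation theorem provides a bounded linear functional $\Lambda\in (L^p(\Om))^*$ and a constant $\alpha\in\R$ with
\[
\Lambda(h)\le\alpha<\Lambda(g)\qquad\textrm{for all }h\in\overline{C_i}.
\]
In particular $\Lambda(g_j)\le\alpha$ for all $j\ge i$, whereas weak convergence gives $\Lambda(g_j)\to\Lambda(g)>\alpha$, a contradiction. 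Hence $g\in\overline{C_i}$ for every $i$.

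The only real obstacle is this separation step, and it is routine here because $L^p(\Om)$ is a Banach space and, under the assumption $g\notin\overline{C_i}$, the set $\overline{C_i}$ is a genuinely closed convex set strictly separated from the singleton $\{g\}$. Once $g\in\overline{C_i}$ is known for each $i$, the sequence $\{\widehat g_i\}$ is extracted directly from the definition of the closure, with each $\widehat g_i=\sum_{j=i}^{N_i}a_{i,j}g_j$ a finite convex combination satisfying $a_{i,j}\ge 0$ and $\sum_{j=i}^{N_i}a_{i,j}=1$. Notably the argument uses no reflexivity or other special structure of $L^p(\Om)$, so it is valid uniformly for every $1\le p<\infty$.
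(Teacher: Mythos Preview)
Your argument is correct and is exactly the classical proof of Mazur's lemma via Hahn--Banach separation. The paper itself does not supply a proof of this lemma; it simply records the statement and refers to \cite[Theorem 3.12]{Rud}, whose proof is essentially the one you have written.
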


\begin{lemma}\label{lem:Fuglede lemma}
Let $\{g_i\}_{i=1}^{\infty}$ be a sequence of functions with $g_i\to g$ in $L^p(\Om)$.
Then for $p$-a.e. curve $\gamma$ in $\Om$, we have
\[
\int_{\gamma}g_i\,ds\to \int_{\gamma}g\,ds\quad\textrm{as }i\to\infty.
\]
\end{lemma}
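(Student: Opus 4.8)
The plan is to reduce to the case $g=0$ and then establish the standard form of Fuglede's lemma through a Borel--Cantelli argument on curve families. Set $h_i:=|g_i-g|$, so that $h_i\ge 0$ and $\Vert h_i\Vert_{L^p(\Om)}\to 0$. Since
\[
\left|\int_{\gamma}g_i\,ds-\int_{\gamma}g\,ds\right|\le \int_{\gamma}h_i\,ds
\]
for every curve $\gamma$ along which these integrals are finite, it suffices to prove that $\int_{\gamma}h_i\,ds\to 0$ for $p$-a.e.\ curve $\gamma$. The only ingredients are a single elementary modulus estimate together with the countable subadditivity of $\Mod_p$.

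First I would record the key estimate: for a nonnegative Borel function $h\in L^p(\Om)$ and any $\lambda>0$, the function $h/\lambda$ is admissible for the family
\[
\Gamma_{h,\lambda}:=\left\{\gamma\subset\Om\colon \int_{\gamma}h\,ds\ge \lambda\right\},
\]
because $\int_{\gamma}(h/\lambda)\,ds\ge 1$ on this family. Hence, directly from the definition of $p$-modulus,
\[
\Mod_p(\Gamma_{h,\lambda})\le \int_{\Om}(h/\lambda)^p\,d\mu=\lambda^{-p}\Vert h\Vert_{L^p(\Om)}^p.
\]

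The heart of the argument is then a Borel--Cantelli step, and it is here that one must pass to a subsequence. Choosing a subsequence $\{h_{i_k}\}$ with $\Vert h_{i_k}\Vert_{L^p(\Om)}\le 4^{-k}$ and applying the estimate with $\lambda_k:=2^{-k}$ gives $\Mod_p(\Gamma_{h_{i_k},\lambda_k})\le 2^{-kp}$. By countable subadditivity of the modulus, the families $E_m:=\bigcup_{k\ge m}\Gamma_{h_{i_k},\lambda_k}$ satisfy $\Mod_p(E_m)\le\sum_{k\ge m}2^{-kp}\to 0$, so the limsup family $\bigcap_m E_m$ has zero $p$-modulus. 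Any curve $\gamma$ outside this family lies in only finitely many $\Gamma_{h_{i_k},\lambda_k}$, whence $\int_{\gamma}h_{i_k}\,ds<2^{-k}\to 0$; combined with the displayed triangle inequality this yields $\int_{\gamma}g_{i_k}\,ds\to\int_{\gamma}g\,ds$ for $p$-a.e.\ curve.

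I expect the genuine subtlety to be precisely this extraction of a subsequence, rather than any delicate estimate. The summability $\sum_k\lambda_k^{-p}\Vert h_{i_k}\Vert_{L^p(\Om)}^p<\infty$ that drives the Borel--Cantelli bound cannot be arranged for the full sequence from $\Vert h_i\Vert_{L^p(\Om)}\to 0$ alone (a sliding-bump sequence, constant along a positive-modulus family of curves, already defeats the literal full-sequence conclusion), which is why the cited form \cite[Lemma 2.1]{BB} is understood after passing to a subsequence. This is harmless in all our applications, where Fuglede's lemma is invoked together with Mazur's Lemma \ref{lem:Mazur lemma}, so that a subsequence is all that is ever required.
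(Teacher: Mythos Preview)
The paper does not supply its own proof of this lemma; it merely cites \cite[Lemma~2.1]{BB}. Your argument is the standard proof of Fuglede's lemma via the Chebyshev-type modulus estimate $\Mod_p(\Gamma_{h,\lambda})\le \lambda^{-p}\Vert h\Vert_{L^p}^p$ together with a Borel--Cantelli step, and it is correct.

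You also correctly flag a genuine imprecision in the statement as written: the conclusion does not hold for the full sequence in general, only after passing to a subsequence. Indeed, the cited reference \cite[Lemma~2.1]{BB} states the result only in the subsequence form. Your counterexample sketch (a sliding-bump sequence) is to the point, and your observation that this is harmless in the paper's applications---where Fuglede is always paired with Mazur's lemma and one is free to pass to a further subsequence---is exactly right.
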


\begin{definition}\label{def:upper gradient}
Let $f\colon \Om\to Y$. We say that a Borel
function $g\colon \Om\to [0,\infty]$ is an upper gradient of $f$ in $\Om$ if
\begin{equation}\label{eq:upper gradient inequality}
d_Y(f(\gamma(0)),f(\gamma(\ell_{\gamma})))\le \int_{\gamma}g\,ds
\end{equation}
for every curve $\gamma\colon [0,\ell_{\gamma}]\to \Om$.
We use the conventions $\infty-\infty=\infty$ and
$(-\infty)-(-\infty)=-\infty$.
If $g\colon \Om\to [0,\infty]$ is a $\mu$-measurable function
and (\ref{eq:upper gradient inequality}) holds for $p$-a.e. curve in $\Om$,
we say that $g$ is a $p$-weak upper gradient of $f$ in $\Om$.
\end{definition}

We say that $f\in L^p(\Om;Y)$ if $d_Y(f(\cdot),f(x))\in L^p(\Om)$ for some $x\in \Om$.

\begin{definition}
The Newton-Sobolev space $N^{1,p}(\Om;Y)$ consists of those
mappings $f\in L^p(\Om;Y)$ for which there exists an upper gradient $g\in L^p(\Om)$.

The Dirichlet space $D^p(\Om;Y)$ consists of those mappings $f\colon \Om\to Y$
that have an upper gradient $g\in L^p(\Om)$.
\end{definition}

In the classical setting of $X=Y=\R^n$, both spaces equipped with the Lebesgue measure,
and assuming $f$ is  continuous,
we have $f\in N^{1,p}(X;Y)$ if and only if
$f\in W^{1,p}(\R^n;\R^n)$, see e.g. \cite[Theorem A.2]{BB}.\\

We say that $f\in N_{\loc}^{1,p}(\Om;Y)$ if for every $x\in \Om$ there is $r>0$ such that
$f\in N^{1,p}(B(x,r);Y)$; other local spaces of mappings are defined analogously.

It is known that for every $f\in D_{\loc}^{p}(\Om;Y)$ there exists a minimal $p$-weak
upper gradient of $f$ in $\Om$, denoted by $g_{f}$, satisfying $g_{f}\le g$ 
$\mu$-a.e. in $\Om$ for every $p$-weak upper gradient $g\in L_{\loc}^{p}(\Om)$
of $f$ in $\Om$, see \cite[Theorem 6.3.20]{HKSTbook}.

We say that $X$ supports a $(1,p)$-Poincar\'e inequality
if every ball in $X$ has nonzero $\mu$-measure, and
there exist constants $C_P>0$ and $\lambda \ge 1$ such that for every
ball $B(x,r)$, every $u\colon X\to \R$ that is integrable on balls,
and every upper gradient $g$ of $u$,
we have
\begin{equation}\label{eq:Poincare}
\vint{B(x,r)}|u-u_{B(x,r)}|\, d\mu 
\le C_P r\vint{B(x,\lambda r)}g\,d\mu,
\end{equation}
where 
\[
u_{B(x,r)}:=\vint{B(x,r)}u\,d\mu :=\frac 1{\mu(B(x,r))}\int_{B(x,r)}u\,d\mu.
\]
\phantom\\

\section{The exceptional set $E$}\label{sec:exceptional}

In this section we consider some preliminary results and use them to study the exceptional set $E$.
Recall that the standing assumptions on the spaces $X,Y$
are listed in the first paragraph of Section \ref{sec:Prelis}.

In Balogh--Koskela--Rogovin \cite{BKR} and Williams \cite{Wi},
it is assumed that the exceptional set $E$ is $\sigma$-finite with respect
to the $(Q-p)$-dimensional Hausdorff measure. Since we do not assume the space to be
Ahlfors regular, we instead consider a codimension $p$ Hausdorff measure.
Specifically, we consider it with respect to the measure $\widetilde{\mu}$,
which will always be assumed to satisfy a doubling condition.

\begin{definition}\label{def:centered codimension one measure}
	Let $1\le p<\infty$.
	For any set $A\subset X$ and $0<R<\infty$, the restricted Hausdorff content
	of codimension $p$ is defined by
	\[
	\widetilde{\mathcal{H}}^p_{R}(A):=\inf\left\{ \sum_{j}
	\frac{\widetilde{\mu}(B(x_{j},r_{j}))}{r_{j}^p}\colon\,
	\ A\subset\bigcup_{j}B(x_{j},r_{j}),\ r_{j}\le R\right\},
	\]
	where we consider finite and countable coverings. 
	The codimension $p$ Hausdorff measure of $A\subset X$ is then defined by
	\[
	\widetilde{\mathcal{H}}^p(A):=\lim_{R\rightarrow 0}\widetilde{\mathcal{H}}^p_{R}(A).
	\]
\end{definition}

If $\widetilde{\mu}$ is Ahlfors $Q$-regular, then $\widetilde{\mathcal{H}}^p$ is easily seen to be
comparable to $\mathcal H^{Q-p}$ when $1\le p\le Q$.

The following result is often used and a proof can be found e.g. in \cite[Lemma 4.2]{Boj}.

\begin{lemma}\label{lem:dilated balls lemma}
	Let $1<p<\infty$.
	Let $\Om\subset X$ be open and suppose $\widetilde{\mu}$ is doubling with constant $C_d$ within 
	a ball $2B_0$, with $\Om\subset B_0$.
	Then for any finite or countable collection of balls $\{B_j=B(x_j,r_j)\}_{j}$ with
	$6B_j\subset \Om$,
	and for numbers $a_j\ge 0$, we have
	\[
	\int_X \left(\sum_{j}a_j\ch_{6B_j}\right)^p\,d\widetilde{\mu}
	\le C_0\int_X \left(\sum_{j}a_j\ch_{B_j}\right)^p\,d\widetilde{\mu}
	\]
	for a constant $C_0$ that only depends on $C_d$ and $p$.
\end{lemma}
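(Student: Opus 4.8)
The plan is to deduce the lemma from the equivalent inequality of $L^p(\widetilde{\mu})$-norms
\[
\Bigl\|\sum_j a_j\ch_{6B_j}\Bigr\|_{L^p(\widetilde{\mu})}\le C_0^{1/p}\Bigl\|\sum_j a_j\ch_{B_j}\Bigr\|_{L^p(\widetilde{\mu})},
\]
which I would prove by $L^p$--$L^{p'}$ duality ($p'=p/(p-1)$) combined with the boundedness on $L^{p'}(\widetilde{\mu})$ of a Hardy--Littlewood maximal operator restricted to $2B_0$. First I would reduce to a finite collection $B_1,\dots,B_N$: establishing the estimate with a constant $C_0$ independent of $N$ and letting $N\to\infty$ by monotone convergence gives the countable case, and one may assume the right-hand side is finite. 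All functions involved are supported in $\bigcup_j 6B_j\subset\Om\subset B_0$, and since $\widetilde{\mu}$ is doubling within $2B_0$ it is finite on balls contained in $2B_0$ and $2B_0$ is separable, so $\widetilde{\mu}$ is $\sigma$-finite on $2B_0$ and duality for $\|\cdot\|_{L^p(\widetilde{\mu})}$ is available. Hence it suffices to show that for every $\phi\ge0$ with $\|\phi\|_{L^{p'}(\widetilde{\mu})}\le1$,
\[
\sum_j a_j\int_{6B_j}\phi\,d\widetilde{\mu}\le C_d^{3}\,C_3\,\Bigl\|\sum_j a_j\ch_{B_j}\Bigr\|_{L^p(\widetilde{\mu})},
\]
where $C_3=C_3(C_d,p)$ is the operator norm of the maximal operator introduced below; raising to the $p$-th power and taking the supremum over such $\phi$ then yields the claim with $C_0=(C_d^{3}C_3)^p$.

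To prove the displayed bound, introduce the uncentered maximal operator restricted to $2B_0$,
\[
\mathcal M\phi(x):=\sup\Bigl\{\vint{B}|\phi|\,d\widetilde{\mu}:\ B\text{ a ball with }x\in B\subset 2B_0\Bigr\},
\]
and note two facts. Since $6B_j=B(x_j,6r_j)\subset\Om\subset B_0\subset 2B_0$ is an admissible ball containing every point of $B_j$, we have $\vint{6B_j}|\phi|\,d\widetilde{\mu}\le\mathcal M\phi(x)$ for each $x\in B_j$, and therefore
\[
\vint{6B_j}\phi\,d\widetilde{\mu}\le\inf_{x\in B_j}\mathcal M\phi(x)\le\vint{B_j}\mathcal M\phi\,d\widetilde{\mu}
\]
(here $\widetilde{\mu}(B_j)>0$ by the doubling condition). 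Moreover, applying the doubling inequality three times along $B(x_j,r_j)\subset B(x_j,2r_j)\subset B(x_j,4r_j)\subset B(x_j,8r_j)$, whose intermediate balls all lie in $6B_j\subset 2B_0$ so that the doubling hypothesis genuinely applies, gives $\widetilde{\mu}(6B_j)\le\widetilde{\mu}(B(x_j,8r_j))\le C_d^{3}\widetilde{\mu}(B_j)$. Combining these,
\[
a_j\int_{6B_j}\phi\,d\widetilde{\mu}=a_j\,\widetilde{\mu}(6B_j)\vint{6B_j}\phi\,d\widetilde{\mu}\le C_d^{3}\,a_j\int_{B_j}\mathcal M\phi\,d\widetilde{\mu},
\]
and summing over $j$ (Tonelli) followed by Hölder's inequality yields
\[
\sum_j a_j\int_{6B_j}\phi\,d\widetilde{\mu}\le C_d^{3}\int_X\Bigl(\sum_j a_j\ch_{B_j}\Bigr)\mathcal M\phi\,d\widetilde{\mu}\le C_d^{3}\Bigl\|\sum_j a_j\ch_{B_j}\Bigr\|_{L^p(\widetilde{\mu})}\|\mathcal M\phi\|_{L^{p'}(\widetilde{\mu})}.
\]

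It then remains to bound $\|\mathcal M\phi\|_{L^{p'}(\widetilde{\mu})}\le C_3\|\phi\|_{L^{p'}(\widetilde{\mu})}$ with $C_3$ depending only on $C_d$ and $p$, and this is the step I expect to be the main obstacle. The global Hardy--Littlewood maximal theorem cannot be invoked verbatim, since $\widetilde{\mu}$ is only assumed doubling within $2B_0$; instead one must run the usual Vitali $5r$-covering argument for balls constrained to lie in $2B_0$, obtaining a weak-type $(1,1)$ estimate and then interpolating with the trivial $L^{\infty}$ bound. This is routine but technical, and it is precisely here that the doubling hypothesis and the containment $6B_j\subset\Om\subset B_0$ are both needed: the former controls $\widetilde{\mu}(6B_j)$ in terms of $\widetilde{\mu}(B_j)$ and supplies the maximal-function bound, while the latter keeps $6B_j$ and all dyadic dilates of $B_j$ inside $2B_0$, where doubling is in force. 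Once this is in hand, inserting $\|\mathcal M\phi\|_{L^{p'}(\widetilde{\mu})}\le C_3$ into the last display and taking the supremum over admissible $\phi$ gives the displayed bound, and hence the lemma with $C_0=(C_d^{3}C_3)^p$.
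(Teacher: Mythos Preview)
The paper does not actually prove this lemma; it simply cites \cite[Lemma 4.2]{Boj} (Bojarski). Your argument---duality to $L^{p'}$, the pointwise bound $\vint{6B_j}\phi\,d\widetilde{\mu}\le \inf_{B_j}\mathcal M\phi$, the doubling estimate $\widetilde{\mu}(6B_j)\le C_d^3\widetilde{\mu}(B_j)$, and then H\"older plus the $L^{p'}$-boundedness of the maximal operator---is precisely the classical Bojarski (Fefferman--Stein type) proof that the citation points to, so in substance you are reproducing what the paper defers to the reference.

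One small technical remark: when you sketch the weak-$(1,1)$ bound for the localized maximal operator via the $5r$-covering lemma, the $5$-dilates of balls $B\subset 2B_0$ need not remain in $2B_0$, so the doubling hypothesis ``within $2B_0$'' does not apply to them directly. This is easily handled---for instance by restricting the supremum in $\mathcal M$ to balls contained in $B_0$ (which suffices since the only balls you actually use are the $6B_j\subset\Om\subset B_0$), so that the dilates needed for Vitali stay inside $2B_0$---but you should say so explicitly rather than leave it as ``routine but technical.''
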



We have the following ``continuity from below'' for the modulus of families of curves;
for a proof see \cite[Lemma 2.3]{Z2}
or \cite[Proposition 5.2.11]{HKSTbook}.

\begin{lemma}\label{lem:mod-continuity}
	Let $1<p<\infty$.
	If $\{\Gamma_j\}_{j=1}^{\infty}$ is a sequence of families of curves such that
	$\Gamma_j\subset \Gamma_{j+1}$ for all $j$, then
	\[
	\Mod_p\Bigg(\bigcup_{j=1}^{\infty}\Gamma_j\Bigg) = \lim_{j\to\infty}\Mod_p(\Gamma_j).
	\]
\end{lemma}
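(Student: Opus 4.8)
The plan is to argue by weak compactness in $L^p$, which is precisely why the statements of Mazur's and Fuglede's lemmas were recorded immediately beforehand; the hypothesis $1<p<\infty$ enters only through reflexivity of $L^p(X)$. Write $\Gamma:=\bigcup_j\Gamma_j$. One inequality, $\lim_j\Mod_p(\Gamma_j)\le\Mod_p(\Gamma)$, is immediate from monotonicity of the modulus (an admissible function for $\Gamma$ is admissible for each $\Gamma_j$), and the limit $M:=\lim_j\Mod_p(\Gamma_j)$ exists in $[0,\infty]$ since the sequence is nondecreasing. So the whole content is the reverse inequality $\Mod_p(\Gamma)\le M$, and we may assume $M<\infty$.

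First I would pick, for each $j$, a Borel admissible function $\rho_j$ for $\Gamma_j$ with $\int_X\rho_j^p\,d\mu\le\Mod_p(\Gamma_j)+1/j$; these form a bounded sequence in $L^p(X)$. By reflexivity, pass to a weakly convergent subsequence $\rho_{j_k}\rightharpoonup\rho$, with $j_k$ strictly increasing so that $j_k\ge k$, and apply Lemma~\ref{lem:Mazur lemma} to obtain convex combinations $\widehat\rho_i=\sum_{k=i}^{N_i}a_{i,k}\rho_{j_k}\to\rho$ strongly in $L^p(X)$. Two things then have to be checked. (i) $\|\rho\|_{L^p}^p\le M$: this follows from strong convergence together with $\|\widehat\rho_i\|_{L^p}^p\le\sum_k a_{i,k}\|\rho_{j_k}\|_{L^p}^p\le\sup_{k\ge i}(M+1/j_k)\to M$, using convexity of $t\mapsto t^p$ and $j_k\ge k$. (ii) $\rho$ is admissible for $p$-a.e.\ curve in $\Gamma$: fix $j$; once $i$ is so large that $j_k\ge j$ for all $k\ge i$, each $\rho_{j_k}$ occurring in $\widehat\rho_i$ is admissible for $\Gamma_j\subset\Gamma_{j_k}$, so $\int_\gamma\widehat\rho_i\,ds\ge1$ for every $\gamma\in\Gamma_j$; Fuglede's lemma (Lemma~\ref{lem:Fuglede lemma}) then yields $\int_\gamma\rho\,ds\ge1$ for $p$-a.e.\ $\gamma\in\Gamma_j$, and since a countable union of $p$-exceptional families is $p$-exceptional, this upgrades to $p$-a.e.\ $\gamma\in\Gamma$.

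It remains to convert ``$\rho$ admissible off a $p$-exceptional subfamily $\Gamma_0$'' into a genuine competitor: choose admissible functions for $\Gamma_0$ whose $L^p$-norms sum to an arbitrarily small number, add their sum $\sigma$ to $\rho$, and note that $\rho+\sigma$ is admissible for all of $\Gamma$ while $\|\rho+\sigma\|_{L^p}^p$ exceeds $M$ by an arbitrarily small amount; letting the error tend to $0$ gives $\Mod_p(\Gamma)\le M$. (Alternatively one may just invoke the standard fact that relaxing admissibility to hold $p$-a.e.\ does not change the modulus.) The only genuinely delicate point is the bookkeeping in step (ii): one must ensure that for each fixed $j$, all but finitely many of the $\rho_{j_k}$ entering the Mazur combinations are already admissible for $\Gamma_j$, which is exactly where $j_k\to\infty$ and the nestedness $\Gamma_j\subset\Gamma_{j+1}$ are used; the rest is routine.
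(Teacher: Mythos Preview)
Your argument is correct and is precisely the standard proof of continuity from below of the $p$-modulus via reflexivity, Mazur, and Fuglede. The paper does not actually supply its own proof of this lemma: it simply quotes the result and refers the reader to Ziemer~\cite{Z2} and to \cite[Proposition~5.2.11]{HKSTbook}. Your write-up is essentially the argument one finds in those references, so there is nothing to compare beyond noting that you have filled in what the authors chose to outsource. One cosmetic remark: the norm bound in step~(i) is even quicker via weak lower semicontinuity of the $L^p$-norm, $\Vert\rho\Vert_{L^p}^p\le\liminf_k\Vert\rho_{j_k}\Vert_{L^p}^p\le M$, which saves you the convexity bookkeeping on the Mazur combinations.
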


In the case $p=1$, from {\cite[Lemma 5.2]{LaZh}} we have the following result for the exceptional set.
By a slight abuse of notation, we denote the image of a curve $\gamma$ also by the same symbol.

\begin{lemma}\label{lem:image Lebesgue measure zero}
	Let $\Om\subset X$ be open and $f\colon \Om\to Y$ continuous.
	Suppose $\widetilde{\mu}\ge \mu$ is doubling
	within $\Om$, and that $E\subset \Om$ has
	$\sigma$-finite $\widetilde{\mathcal{H}}^1$-measure.
	Then $\Mod_1(\Gamma)=0$ for
	\[
	\Gamma:=\{\gamma\subset \Om\colon \mathcal H^1(f(\gamma\cap E))>0\}.
	\]
\end{lemma}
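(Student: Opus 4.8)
The plan is to construct, for each sufficiently small scale $\delta>0$, a Borel function $\rho_\delta$ admissible for (a suitable subfamily of) $\Gamma$ whose integral tends to $0$ as $\delta\to 0$. The crucial feature is that $\rho_\delta$ will be weighted by the \emph{oscillation of $f$ on small balls}, and it is exactly here that one must exploit the hypothesis that $\mathcal H^1(f(\gamma\cap E))>0$, not merely that $\gamma$ meets $E$.

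First, standard reductions. By countable subadditivity of $\Mod_1$ and of $\mathcal H^1$, and since $\gamma\cap E=\bigcup_k(\gamma\cap E_k)$ when $E=\bigcup_k E_k$ with $\widetilde{\mathcal H}^1(E_k)<\infty$, we may assume $\widetilde{\mathcal H}^1(E)<\infty$. Moreover $\mathcal H^1(f(\gamma\cap E))>0$ holds iff $\mathcal H^1_s(f(\gamma\cap E))>\tau$ for some rational $s,\tau>0$ (Hausdorff content at scale $s$), the image of every curve is a compact subset of $\Om$, and nonconstant curves have positive diameter (while constant curves are not in $\Gamma$). Hence it suffices to show $\Mod_1(\Gamma_{K,\eta,s,\tau})=0$ for each compact $K\subset\Om$ from an exhaustion and rationals $\eta,s,\tau>0$, where
\[
\Gamma_{K,\eta,s,\tau}:=\bigl\{\gamma\colon \mathrm{im}\,\gamma\subset K,\ \diam\gamma>\eta,\ \mathcal H^1_s(f(\gamma\cap E))>\tau\bigr\}.
\]
By uniform continuity of $f$ on a neighbourhood of $K$, there is a function $\omega$ with $\omega(\rho)\to0$ as $\rho\to0$ and $d_Y(f(a),f(b))\le\omega(\rho)$ whenever $a,b\in B(y,\rho)\subset\Om$ for some $y\in K$.

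Now fix $\delta>0$ so small that $\omega(2\delta)\le s$, that $4\delta<\eta$, and that $2B\subset\Om$ whenever $B=B(x,r)$ meets $K$ with $r<\delta$. Since $\widetilde{\mathcal H}^1_\delta(E)\le\widetilde{\mathcal H}^1(E)<\infty$, choose balls $B_j=B(x_j,r_j)$ with $E\subset\bigcup_jB_j$, $r_j<\delta$, each $B_j$ meeting $K$ (the others are irrelevant for curves in $K$), and $\sum_j\widetilde\mu(B_j)/r_j\le\widetilde{\mathcal H}^1(E)+1$. Put $o_j:=\sup_{a,b\in B_j}d_Y(f(a),f(b))\le\omega(2\delta)$ and
\[
\rho_\delta:=\frac1\tau\sum_j\frac{o_j}{r_j}\,\ch_{2B_j}.
\]
To see admissibility for $\Gamma_{K,\eta,s,\tau}$, take such a curve $\gamma$. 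Since $f(\gamma\cap E)\subset\bigcup_{j:\gamma\cap B_j\ne\emptyset}f(\gamma\cap B_j)$ and each $f(\gamma\cap B_j)$ has diameter $\le o_j\le s$, these sets cover $f(\gamma\cap E)$ by sets of diameter $\le s$, so $\sum_{j:\gamma\cap B_j\ne\emptyset}o_j\ge\mathcal H^1_s(f(\gamma\cap E))>\tau$. Also, whenever $\gamma\cap B_j\ne\emptyset$, then since $\diam(2B_j)\le4r_j<\eta<\diam\gamma$ the curve must leave $2B_j$, hence travels from a point of $B_j$ out to $\partial(2B_j)$ and so has length $\ge r_j$ inside $2B_j$, giving $\int_\gamma\ch_{2B_j}\,ds\ge r_j$. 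Combining, $\int_\gamma\rho_\delta\,ds\ge\frac1\tau\sum_{j:\gamma\cap B_j\ne\emptyset}o_j>1$.

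Finally, using $\widetilde\mu\ge\mu$, the doubling of $\widetilde\mu$, and $o_j\le\omega(2\delta)$,
\[
\int_X\rho_\delta\,d\mu\le\int_X\rho_\delta\,d\widetilde\mu\le\frac1\tau\sum_j\frac{o_j}{r_j}\widetilde\mu(2B_j)\le\frac{C_d\,\omega(2\delta)}{\tau}\sum_j\frac{\widetilde\mu(B_j)}{r_j}\le\frac{C_d\,\omega(2\delta)}{\tau}\bigl(\widetilde{\mathcal H}^1(E)+1\bigr),
\]
which tends to $0$ as $\delta\to0$; hence $\Mod_1(\Gamma_{K,\eta,s,\tau})=0$, and summing over the countably many $(K,\eta,s,\tau)$ finishes the proof. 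The step I expect to be the crux is forcing this integral to $0$ rather than merely bounding it by a constant times $\widetilde{\mathcal H}^1(E)$: the gain comes entirely from inserting the oscillation factors $o_j$, which is legitimate precisely because the hypothesis concerns the positive measure of the \emph{image} $f(\gamma\cap E)$; with only ``$\gamma$ meets $E$'' the naive choice $\rho_\delta=\sum_j r_j^{-1}\ch_{2B_j}$ yields only $\Mod_1(\Gamma)\le C_d\,\widetilde{\mathcal H}^1(E)$, useless when $E$ has positive codimension-one measure. (A minor technical point is the reduction to curves in a fixed compact set, which tacitly uses that $\Om$ is exhausted by compacta, as holds when $X$ is proper.)
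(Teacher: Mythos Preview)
The paper does not actually prove this lemma; it quotes it as \cite[Lemma~5.2]{LaZh} and moves on. So there is no ``paper's proof'' to compare against directly. Your argument is exactly the natural one for $p=1$ (where continuity from below of $\Mod_p$, used in the paper's $p>1$ analogue Lemma~\ref{lem:image infinite points}, is unavailable): weight the test function by the oscillations $o_j=\diam f(B_j)$, so that admissibility follows from the \emph{image} Hausdorff content being positive, while the $L^1$-norm picks up the extra factor $\sup_j o_j$ and is driven to zero. The core mechanism is correct and well explained.

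The one genuine gap is the step you yourself flag at the end, and it is not minor in the stated generality. The paper's standing assumptions do \emph{not} include properness of $X$ (see the first paragraph of Section~\ref{sec:Prelis}); only connectedness of $X$, separability of $Y$, and local finiteness of the measures are assumed, together with doubling of $\widetilde{\mu}$ within $\Om$. Doubling gives metric doubling and hence separability of $\Om$, but not an exhaustion by compacta, so your appeal to uniform continuity of $f$ on a neighbourhood of a compact $K\subset\Om$ is not justified. Since the entire force of your estimate comes from $\sup_j o_j\le\omega(2\delta)\to 0$, this is exactly the step that fails without compactness.

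The fix is to transfer the ``uniform smallness of oscillation'' from a compact set to the set $E$ itself. For instance, for each $m\in\N$ write $E$ as the increasing union over $n$ of
\[
F_{n,m}:=\Bigl\{x\in E:\ B(x,2/n)\subset\Om\ \text{and}\ \diam f\bigl(B(x,2/n)\bigr)<\tfrac{1}{m}\Bigr\},
\]
which exhausts $E$ by continuity of $f$ at each point. Using that one may take the covering balls in $\widetilde{\mathcal H}^1_\delta$ to meet $F_{n,m}$ (at the cost of a doubling constant), any ball $B_j$ of radius $r_j<\delta<1/(2n)$ meeting $F_{n,m}$ lies in some $B(x_j,2/n)$ with $x_j\in F_{n,m}$, forcing $o_j<1/m$ and $2B_j\subset\Om$ with no compactness needed. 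One then has to organise the double countable decomposition (over $n$ and over the parameters $s,\tau,\eta$) so that the resulting modulus estimates sum to zero; this uses that $\mathcal H^1_s$ is Borel regular and hence satisfies continuity from below along the increasing sequence $f(\gamma\cap F_{n,m})$. This is presumably how \cite{LaZh} proceeds; your proof as written is the proper-space version of that argument.
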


In the case $p>1$, we get the following result,
which is similar to \cite[Lemma 3.5]{BKR}.

\begin{lemma}\label{lem:image infinite points}
	Let $1<p< \infty$.
	Let $\Om\subset X$ be open and suppose $\widetilde{\mu}\ge \mu$ is doubling
	within 
	a ball $2B_0$, with $\Om\subset B_0$.
	Suppose $E\subset \Om$ is a set with $\widetilde{\mathcal{H}}^p(E)<\infty$.
	Then $\Mod_p(\Gamma)=0$ for
	\[
	\Gamma:=\{\gamma\subset \Om\colon \#(\gamma\cap E)=\infty\}.
	\]
\end{lemma}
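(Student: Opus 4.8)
The plan is to construct, for each $\varepsilon > 0$, an admissible function $\rho_\varepsilon$ for the curve family $\Gamma = \{\gamma \subset \Om : \#(\gamma \cap E) = \infty\}$ whose $L^p(X,\mu)$-norm (or rather $L^p(X,\widetilde\mu)$-norm, which dominates it since $\widetilde\mu \ge \mu$) is controlled by $\widetilde{\mathcal H}^p_R(E) + \varepsilon$ for an $R$ that can be made small. Since $\widetilde{\mathcal H}^p(E) < \infty$, by definition of the codimension $p$ Hausdorff measure we have $\sup_{R>0}\widetilde{\mathcal H}^p_R(E) \le \widetilde{\mathcal H}^p(E) < \infty$ (contents increase as $R \to 0$), so these norms stay bounded; but the point of taking $R \to 0$ is that a curve meeting $E$ infinitely often must cross arbitrarily small covering balls infinitely many times, forcing $\int_\gamma \rho_\varepsilon\, ds$ to be large.

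Concretely, first I would fix $R > 0$ small enough that $6B_j \subset \Om$ is achievable for the covering balls (using boundedness of $\Om$ away from where this fails, or restricting attention to a slightly smaller set and covering the rest trivially — the set $E$ lies in $\Om$ which is open). Choose a cover $E \subset \bigcup_j B(x_j, r_j)$ with $r_j \le R$ and $\sum_j \widetilde\mu(B(x_j,r_j))/r_j^p \le \widetilde{\mathcal H}^p_R(E) + 1 =: A < \infty$. Set
\[
\rho_R := \sum_j \frac{1}{r_j}\,\ch_{6B_j}.
\]
If $\gamma \in \Gamma$ meets $E$ in infinitely many points, then $\gamma$ meets infinitely many of the balls $B(x_j, r_j)$, say along an infinite subset of indices $J$; for each $j \in J$, the curve $\gamma$ must traverse a sub-arc of length at least... here I would use the standard fact that a curve entering a ball $B(x_j, r_j)$ and also leaving a larger concentric ball has length at least $\gtrsim r_j$ inside $6B_j$ — but a curve need not leave $6B_j$. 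The correct elementary estimate: since $\gamma$ passes through a point of $B(x_j,r_j)$ and $\gamma$ is nonconstant (has positive length, being rectifiable and meeting $E$ infinitely often so not a single point), one restricts to those $j$ for which $\gamma$ actually crosses the annulus, OR one simply sums: $\int_\gamma \rho_R\, ds \ge \sum_{j \in J} \frac{1}{r_j}\mathcal H^1(\gamma \cap 6B_j)$. This is where care is needed, and it is the main obstacle.

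The cleanest route around the obstacle, following \cite[Lemma 3.5]{BKR}, is a dyadic/telescoping construction: for each $k \in \N$ pick a cover $E \subset \bigcup_j B(x_j^k, r_j^k)$ with $r_j^k \le 2^{-k}$ and $\sum_j \widetilde\mu(B(x_j^k,r_j^k))/(r_j^k)^p \le \widetilde{\mathcal H}^p_{2^{-k}}(E) + 2^{-k} \le \widetilde{\mathcal H}^p(E) + 1$, and set $\rho := \sum_{k=1}^\infty c_k \rho_k$ where $\rho_k := \sum_j (r_j^k)^{-1}\ch_{6B_j^k}$ and $\{c_k\}$ is a sequence with $\sum_k c_k = \infty$ but $\sum_k c_k^p < \infty$ (e.g. $c_k = 1/k$, using $p>1$ — this is exactly where the hypothesis $p > 1$ enters, in contrast to the $p=1$ case handled separately in Lemma \ref{lem:image Lebesgue measure zero}). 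For $\gamma \in \Gamma$: since $\#(\gamma \cap E) = \infty$ and $\gamma$ is rectifiable, one shows $\int_\gamma \rho_k\, ds \ge c$ for some fixed $c > 0$ independent of $k$ — a point $x \in \gamma \cap E$ lies in some $6B_j^k$, and a rectifiable curve through infinitely many points cannot be a single point, so it has positive length near each such point; more robustly, one uses that the curve must pass through balls from the cover covering $\ge 2$ of its (infinitely many) intersection points with $E$ at scale $2^{-k}$, and between two such points the curve has length $\ge$ the relevant radius minus overlap. Then $\int_\gamma \rho\, ds \ge \sum_k c_k \cdot c = \infty$, so $\rho$ is admissible (up to normalization). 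Finally, $\int_X \rho^p\, d\widetilde\mu \le (\sum_k c_k \|\rho_k\|_{L^p(\widetilde\mu)})^p$ — wait, that diverges; instead use $\|\sum c_k \rho_k\|_p \le \sum c_k \|\rho_k\|_p$ only if the $\|\rho_k\|_p$ are bounded, which they are: by Lemma \ref{lem:dilated balls lemma}, $\int_X \rho_k^p\, d\widetilde\mu \le C_0 \int_X (\sum_j (r_j^k)^{-1}\ch_{B_j^k})^p\, d\widetilde\mu$; and on the balls $B_j^k$ — after refining the cover so the $B_j^k$ are pairwise disjoint (a Vitali-type reduction, replacing $\rho_k$ by $\sum (5r_j^k)^{-1}\ch_{5B_j^k}$, absorbing constants into $C_0$) — this is $\le C_0 \sum_j (r_j^k)^{-p}\widetilde\mu(B_j^k) \le C_0(\widetilde{\mathcal H}^p(E)+1) =: C_0 A'$. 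So $\|\rho_k\|_p \le (C_0 A')^{1/p}$ uniformly, hence $\|\rho\|_p \le (\sum_k c_k^p)^{1/p}(C_0 A')^{1/p} \cdot (\text{const})$ — no: the right bound is via $\|\sum_k c_k \rho_k\|_{L^p} \le \sum_k c_k (C_0A')^{1/p}$, still divergent. The actual argument must use that the balls at scale $2^{-k}$ for DIFFERENT $k$ can overlap, but one controls the full sum by a single global maximal-function / Lemma \ref{lem:dilated balls lemma} estimate applied to $\sum_{k,j} c_k (r_j^k)^{-1}\ch_{6B_j^k}$, bounding it by $C_0$ times $\int_X (\sum_{k,j} c_k (r_j^k)^{-1}\ch_{B_j^k})^p\, d\widetilde\mu$, and then — this is the genuinely technical point — one estimates $\int_X (\sum_{k,j}\dots)^p d\widetilde\mu \le \sum_k c_k^p \cdot C(\widetilde{\mathcal H}^p(E)+1)$ using that at each fixed scale the balls may be taken disjoint and a Hölder/duality argument across scales exploiting $\sum_k c_k^p < \infty$; I expect to follow the \cite{BKR} computation verbatim here. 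Concluding: $\|\rho\|_{L^p(\mu)} \le \|\rho\|_{L^p(\widetilde\mu)} < \infty$, $\rho$ is admissible for $\Gamma$, and letting the "$+1$" slack shrink appropriately (or directly, since $\widetilde{\mathcal H}^p(E) < \infty$ gives a uniform bound and we may normalize $\rho / \int_\gamma\rho\,ds$-style), we obtain $\Mod_p(\Gamma) = 0$.

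**The main obstacle**, as indicated above, is the lower bound $\int_\gamma \rho_k\, ds \ge c > 0$: showing that a rectifiable curve meeting $E$ in infinitely many points is forced to accumulate definite $\rho_k$-length at every scale $2^{-k}$. The resolution is that such a curve, restricted to a small arc around one of its infinitely many intersection points with $E$, is nonconstant (if it were constant it would be a point, contradicting infinitely many intersections with the — a priori possibly large — set $E$, unless that point is isolated in $E$; one handles the at-most-countably-many isolated points of $E$ separately, or simply notes the curve has positive total length), and one picks $k$-scale covering balls around two distinct such points $x, x' \in \gamma \cap E$ with $d(x,x') > 2^{-k+1}$, so the sub-arc of $\gamma$ from $x$ to $x'$ has length $\ge 2^{-k+1}$ and is covered by the $6B_j^k$, contributing $\gtrsim 1$ to $\int_\gamma \rho_k\, ds$ after a covering-multiplicity argument; I'd model this precisely on \cite[Lemma 3.5]{BKR}.
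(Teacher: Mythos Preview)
Your approach has a genuine gap at precisely the point you flag: the $L^p$-bound for $\rho = \sum_k c_k \rho_k$. After the Vitali reduction and Lemma~\ref{lem:dilated balls lemma}, you must bound $\int_X \bigl(\sum_{k,j} c_k (r_j^k)^{-1} \ch_{B_j^k}\bigr)^p\, d\widetilde{\mu}$, where only the balls at a \emph{fixed} scale $k$ are disjoint. Across scales the supports pile up near $E$, and no H\"older/duality trick collapses the integral to $C \sum_k c_k^p$: any such estimate would require some almost-orthogonality of the $\rho_k$ that you have not established, and the constraint $\sum_k c_k = \infty$ (forced by admissibility) is in direct tension with the triangle inequality $\|\rho\|_p \le \sum_k c_k \|\rho_k\|_p$ and its weighted variants. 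Your deferral to \cite[Lemma~3.5]{BKR} is misplaced: the paper's proof, which it explicitly describes as similar to that lemma, does not stack scales at all.

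The paper's argument is structurally different and sidesteps the obstacle entirely. It never builds one admissible function for all of $\Gamma$; instead it writes $\Gamma = \bigcap_{j} \bigcup_{k} \Gamma_{j,k}$, where $\Gamma_{j,k}$ consists of curves containing $j$ points of $E$ at pairwise distance $>1/k$. For fixed $(j,k)$ a \emph{single}-scale cover of $E$ by balls of radius $\le (10k)^{-1}$, Vitali-reduced so the $B_n$ are disjoint, gives the admissible function $\rho = \tfrac{1}{j}\sum_n r_n^{-1}\ch_{6B_n}$: each of the $j$ separated points lies in a different $5B_n$ and $\gamma$ must cross the corresponding annulus. Now disjointness holds at that single scale, so Lemma~\ref{lem:dilated balls lemma} yields $\Mod_p(\Gamma_{j,k}) \le C_0 j^{-p}(\widetilde{\mathcal H}^p(E)+1)$, uniformly in $k$. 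Continuity from below of $\Mod_p$ (Lemma~\ref{lem:mod-continuity}) passes this to $\bigcup_k \Gamma_{j,k} \supset \Gamma$, and letting $j \to \infty$ finishes. Thus $p>1$ enters through Lemmas~\ref{lem:dilated balls lemma} and~\ref{lem:mod-continuity}, not through a sequence with $\sum c_k = \infty$ but $\sum c_k^p < \infty$.
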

Here $\# (\gamma\cap E)$ is the cardinality of $\gamma\cap E$.
\begin{proof}
For $j,k\in\N$, define
\[
\Gamma_{j,k}:=\{\gamma\in\Gamma\colon \exists\, \{x_1,\ldots,x_j\}\in\gamma\cap E
\textrm{ s.t. }d(x_l,x_m)>1/k\textrm{ for all }l\neq m\}.
\]
	Fix $j,k\in\N$.
	There is a (finite or countable) cover of $E$ by balls $\{B_n=B(y_n,r_n)\}_{n}$
	such that $r_n\le (10k)^{-1}$, $B(y_n,6r_n)\subset \Om$, and
	\[
	\sum_{n}\frac{\widetilde{\mu}(B(y_{n},r_{n}))}{r_{n}^p} < \widetilde{\mathcal H}^p(E)+1.
	\]
	By the $5$-covering lemma (see e.g. \cite[p. 60]{HKSTbook}),
	we find an index set $I$
	such that the balls $\{B(y_n,r_n)\}_{n\in I}$ are disjoint, and the balls
	$\{B(y_n,5r_n)\}_{n\in I}$ cover $E$.
	Define the function
	\[
	\rho:=\frac{1}{j}\sum_{n\in I}\frac{\ch_{B(y_{n},6r_{n})}}{r_{n}}.
	\]
	Let $\gamma\in \Gamma_{j,k}$ and consider the points $\{x_1,\ldots,x_j\}$.
	Each of these points is contained in a different ball $B(y_n,5r_n)$, $n\in I$,
	and necessarily travels at least the distance $r_n$ in the ball $B(y_n,6r_n)$.
	Thus
	\[
	\int_{\gamma}\rho\,ds
	=\frac{1}{j}\sum_{n\in I}\int_{\gamma}\frac{\ch_{B(y_{n},6r_{n})}}{r_{n}}\,ds
	\ge \frac{1}{j}\times j \times \frac{r_n}{r_{n}}=1,
	\]
	and so $\rho$ is admissible for $\Gamma_{j,k}$.
	Hence
	\begin{align*}
	\Mod_p(\Gamma_{j,k})
	\le\int_X \rho^p\,d\mu
	&\le \int_X \rho^p\,d\widetilde{\mu}\\
	&\le \frac{1}{j^p}\int_X\left(\sum_{n\in I}\frac{\ch_{B(y_{n},6r_{n})}}{r_{n}}\right)^p\,d\widetilde{\mu}\\
	&\le \frac{C_0}{j^p}\int_X\left(\sum_{n\in I}\frac{\ch_{B(y_{n},r_{n})}}{r_{n}}\right)^p\,d\widetilde{\mu}\quad\textrm{by Lemma }\ref{lem:dilated balls lemma}\\
	&= \frac{C_0}{j^p}\sum_{n\in I}\frac{\widetilde{\mu}(B(y_{n},r_{n}))}{r_{n}^p}\\
	&\le \frac{C_0}{j^p}(\widetilde{\mathcal H}^p(E)+1).
	\end{align*}
	Then by Lemma \ref{lem:mod-continuity}, we get
	\[
	\Mod_p\left(\bigcup_{k=1}^{\infty}\Gamma_{j,k}\right)
	=\lim_{k\to\infty}\Mod_p(\Gamma_{j,k})
	\le \frac{1}{j^p}(\widetilde{\mathcal H}^p(E)+1).
	\]
	Finally note that $\Gamma=\bigcap_{j=1}^{\infty}\bigcup_{k=1}^{\infty}\Gamma_{j,k}$.
	Thus $\Mod_p(\Gamma)=0$.
\end{proof}

\begin{lemma}\label{lem:image uncountable points}
	Let $1<p< \infty$.
	Let $\Om\subset X$ be open and suppose $\widetilde{\mu}\ge \mu$ is doubling within 
	a ball $2B_0$, with $\Om\subset B_0$.
	Suppose $E\subset \Om$ has $\sigma$-finite $\widetilde{\mathcal{H}}^p$-measure.
	Then $\Mod_p(\Gamma)=0$ for
	\[
	\Gamma:=\{\gamma\subset \Om\colon \gamma\cap E\textrm{ is uncountable}\}.
	\]
\end{lemma}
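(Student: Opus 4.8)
The plan is to deduce this from Lemma \ref{lem:image infinite points} by decomposing $E$ and using countable subadditivity of the $p$-modulus; the statement is essentially a formal consequence of the previous lemma.

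Since $E$ has $\sigma$-finite $\widetilde{\mathcal H}^p$-measure, I would first write $E=\bigcup_{n=1}^{\infty}E_n$ with each $E_n\subset\Om$ and $\widetilde{\mathcal H}^p(E_n)<\infty$. For each $n$, set
\[
\Gamma_n:=\{\gamma\subset\Om\colon \#(\gamma\cap E_n)=\infty\}.
\]
The hypotheses that $\widetilde{\mu}\ge\mu$ is doubling within $2B_0$ with $\Om\subset B_0$ are inherited, so Lemma \ref{lem:image infinite points} applies to each $E_n$ and yields $\Mod_p(\Gamma_n)=0$.

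Next I would pass to the union. Either one invokes directly that $\Mod_p$ is countably subadditive (a standard property of the modulus), or---to stay within the tools already used above---one sets $\widehat\Gamma_n:=\Gamma_1\cup\dots\cup\Gamma_n$, notes that if $\rho_m$ is admissible for $\Gamma_m$ then $\rho_1+\dots+\rho_n$ is admissible for $\widehat\Gamma_n$, whence $\Mod_p(\widehat\Gamma_n)\le n^{p-1}\sum_{m=1}^n\Mod_p(\Gamma_m)=0$, and then applies Lemma \ref{lem:mod-continuity} to the increasing sequence $\{\widehat\Gamma_n\}_{n}$ to obtain $\Mod_p\big(\bigcup_{n=1}^{\infty}\Gamma_n\big)=0$.

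Finally, I would verify the inclusion $\Gamma\subset\bigcup_{n=1}^{\infty}\Gamma_n$: if $\gamma\cap E$ is uncountable, then, since $\gamma\cap E=\bigcup_{n=1}^{\infty}(\gamma\cap E_n)$ and a countable union of finite sets is countable, at least one $\gamma\cap E_n$ must be infinite, i.e. $\gamma\in\Gamma_n$. Hence $\Mod_p(\Gamma)\le\Mod_p\big(\bigcup_n\Gamma_n\big)=0$. There is no genuine obstacle here; the only real content is this set-theoretic reduction, and the only point requiring minor care is the passage from the countably many null families $\Gamma_n$ to their union, which I would route through Lemma \ref{lem:mod-continuity} exactly as in the proof of Lemma \ref{lem:image infinite points}.
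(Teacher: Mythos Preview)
Your proof is correct and follows essentially the same route as the paper: decompose $E$ into pieces of finite $\widetilde{\mathcal H}^p$-measure, apply Lemma~\ref{lem:image infinite points} to each piece, observe that $\Gamma\subset\bigcup_n\Gamma_n$, and conclude via countable subadditivity of the modulus. The only difference is that you spell out the set-theoretic inclusion and the subadditivity step in more detail than the paper does.
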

\begin{proof}
We have $E=\bigcup_{k=1}^{\infty}E_k$ with $\widetilde{\mathcal{H}}^p(E_k)<\infty$ for every
$k\in\N$. Define the families
\[
\Gamma_k:=\{\gamma\subset \Om\colon \#(\gamma\cap E_k)=\infty\}.
\]
By Lemma \ref{lem:image infinite points}, we have $\Mod_{p}(\Gamma_k)=0$ for every
$k\in\N$. Clearly $\Gamma\subset \bigcup_{k=1}^{\infty}\Gamma_k$, and so also
$\Mod_p(\Gamma)=0$.
\end{proof}

Now we get the following result for the exceptional set $E$.

\begin{proposition}\label{prop:exceptional set E}
	Let $\Om\subset X$ be open and suppose $\widetilde{\mu}\ge\mu$ is doubling within 
	a ball $2B_0$, with $\Om\subset B_0$.
	Let $1\le p<\infty$.
	Suppose $E=E_1\cup E_2\subset \Om$ such that
	$E_1$ is an at most countable set, and $E_2$ is $\sigma$-finite
	with respect to $\widetilde{\mathcal{H}}^p$.
	Let $f\colon \Om\to Y$ be continuous.
	Then
	\[
	\Mod_p(\{\gamma\subset \Om\colon \mathcal H^1(f(\gamma\cap E))>0\})=0.
	\]
\end{proposition}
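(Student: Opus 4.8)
The plan is to split $E=E_1\cup E_2$ and handle each piece using the results already proved above, so that essentially no new analytic work is needed. First, I would set
\[
\Gamma:=\{\gamma\subset\Om\colon \mathcal H^1(f(\gamma\cap E))>0\},\qquad
\Gamma_i:=\{\gamma\subset\Om\colon \mathcal H^1(f(\gamma\cap E_i))>0\}\quad(i=1,2),
\]
and note that $\gamma\cap E=(\gamma\cap E_1)\cup(\gamma\cap E_2)$, hence $f(\gamma\cap E)=f(\gamma\cap E_1)\cup f(\gamma\cap E_2)$, so by subadditivity of $\mathcal H^1$ we get $\mathcal H^1(f(\gamma\cap E))\le \mathcal H^1(f(\gamma\cap E_1))+\mathcal H^1(f(\gamma\cap E_2))$ and therefore $\Gamma\subset\Gamma_1\cup\Gamma_2$. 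By monotonicity and countable subadditivity of $\Mod_p$ it then suffices to show $\Mod_p(\Gamma_1)=0$ and $\Mod_p(\Gamma_2)=0$.

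For $\Gamma_1$: since $E_1$ is at most countable, for \emph{every} curve $\gamma$ the set $\gamma\cap E_1$ is at most countable, hence so is $f(\gamma\cap E_1)$, and a countable subset of $Y$ has $\mathcal H^1$-measure zero. Thus $\Gamma_1=\emptyset$ and $\Mod_p(\Gamma_1)=0$. This step is insensitive to the codimension $p$ Hausdorff measure of $E_1$, which is precisely why an arbitrary countable set --- even one with infinite $\widetilde{\mathcal H}^p$-measure --- can be absorbed into $E$.

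For $\Gamma_2$ I would split into two cases. If $p=1$: then $E_2$ is $\sigma$-finite with respect to $\widetilde{\mathcal H}^1$, and $\widetilde\mu\ge\mu$ is doubling within $2B_0\supset\Om$, hence within $\Om$, so Lemma~\ref{lem:image Lebesgue measure zero}, applied with its set $E$ taken to be $E_2$, gives $\Mod_1(\Gamma_2)=0$. If $1<p<\infty$: whenever $\mathcal H^1(f(\gamma\cap E_2))>0$, the set $f(\gamma\cap E_2)$ is uncountable, and hence $\gamma\cap E_2$ is uncountable; thus $\Gamma_2\subset\{\gamma\subset\Om\colon \gamma\cap E_2\text{ is uncountable}\}$, whose $p$-modulus vanishes by Lemma~\ref{lem:image uncountable points} (using that $E_2$ is $\sigma$-finite with respect to $\widetilde{\mathcal H}^p$). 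Combining the two cases completes the argument.

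I do not expect a genuine obstacle here: the analytic content is entirely in Lemmas~\ref{lem:image Lebesgue measure zero} and \ref{lem:image uncountable points}, and what remains is bookkeeping. The only real subtlety is the dichotomy in $p$. For $p>1$ one may afford to discard the whole family of curves meeting $E_2$ in uncountably many points, but for $p=1$ that family can have positive modulus, so there one genuinely needs the sharper statement of Lemma~\ref{lem:image Lebesgue measure zero}, whose proof is where the continuity of $f$ enters.
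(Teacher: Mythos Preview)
Your proposal is correct and follows essentially the same approach as the paper's proof: handle $E_1$ by noting $f(\gamma\cap E_1)$ is countable for every $\gamma$, and handle $E_2$ by invoking Lemma~\ref{lem:image Lebesgue measure zero} when $p=1$ and Lemma~\ref{lem:image uncountable points} when $p>1$. The paper compresses this into two sentences, but the content is identical to what you have written.
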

\begin{proof}
	For every curve $\gamma\subset \Om$, the set $f(\gamma\cap E_1)$ is at most countable,
	and for $p$-a.e. curve $\gamma\subset \Om$ we have
	$\mathcal H^1(f(\gamma\cap E_2))=0$
	by Lemma \ref{lem:image Lebesgue measure zero} and Lemma \ref{lem:image uncountable points}.
\end{proof}

\begin{remark}\label{rem:on the exceptional set E}
	Let $Q>1$.
	A countable set is always $\sigma$-finite with respect to the $Q-p$-dimensional Hausdorff measure,
	when $1\le p\le Q$. Thus when considering this measure, the set $E_1$ could always be included
	in the set $E_2$ above. However, in our setting it can
	happen that even a single point has infinite $\widetilde{\mathcal H}^p$-measure,
	as can be seen from equipping $\R^n$ with
	the weighted measure $d\mu=d\widetilde{\mu}=|x|^{-\alpha}\,d\mathcal L^n$,
	with $n-p<\alpha<n$.
	
	In an Ahlfors $Q$-regular space, $\widetilde{\mathcal H}^p$ is clearly comparable to
	the $(Q-p)$-dimensional Hausdorff measure, when $1\le p\le Q$.
	Thus in such a space our condition on $E$ reduces to that required in
	\cite{BKR,Wi}.
\end{remark}

\section{Preliminary results}\label{sec:preliminary}

In this section we record and prove further preliminary results, such as covering theorems and
basic results on absolute continuity on curves.

An obvious question concerning the Jacobian \eqref{eq:Jacobian} is the existence of the limit.
For this, we consider the following definitions and facts.
Let $Z$ be a separable metric space.
A closed ball is defined by
\[
\overline{B}(x,r):=\{y\in Z\colon d(y,x)\le r\}.
\]

\begin{definition}
A covering $\mathcal F$ of a set $A\subset Z$, consisting of closed balls $\overline{B}(x,r)$,
is called a fine covering if
\[
\inf\{r>0\colon \overline{B}(x,r)\in\mathcal F\}=0
\]
for every $x\in A$.
\end{definition}

\begin{definition}\label{def:fine covering}
Let $Z$ be equipped with a locally finite Borel regular outer measure $\mu_0$.
We say that $\mu_0$ is a Vitali measure in $Z$ if
for every set $A\subset Z$ and every fine covering $\mathcal F$ of $A$, consisting of closed balls
$\overline{B}$, there is a subcollection $\mathcal G\subset\mathcal F$ consisting
of pairwise disjoint balls such that
\[
\mu_0\left(A\setminus \bigcup_{\overline{B}\in \mathcal G}\overline{B}\right)=0.
\]
\end{definition}

By locally finite, we mean that for every $x\in Z$ there is $r>0$ such that
$\mu_0(B(x,r))<\infty$.
For a proof of the following fact, see e.g. \cite[Theorem 3.4.3]{HKSTbook}.

\begin{proposition}\label{prop:mutilde measure}
Suppose $\widetilde{\mu}$ is a Borel regular, locally finite outer measure on $Z$ and
\[
\limsup_{r\to 0}\frac{\widetilde{\mu}(B(x,2r))}{\widetilde{\mu}(B(x,r))}<\infty
\]
for $\widetilde{\mu}$-a.e. $x\in Z$.
Then $\widetilde{\mu}$ is a Vitali measure in $Z$.
\end{proposition}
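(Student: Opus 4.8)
\emph{Proof proposal.} This is a classical fact (it singles out the asymptotically doubling measures among the Vitali measures), and the plan is the standard $5r$-covering argument, carried out carefully because $\widetilde\mu$ is only \emph{asymptotically} doubling and only locally finite. First I would fix a $\widetilde\mu$-null set $N_0$ such that for every $x\in Z\setminus N_0$ one has $\widetilde\mu(B(x,r))>0$ for all $r>0$ and $\limsup_{r\to 0}\widetilde\mu(B(x,2r))/\widetilde\mu(B(x,r))<\infty$; such $N_0$ exists by the hypothesis together with separability. Then I would decompose $Z\setminus N_0$ into countably many pieces carrying a uniform doubling bound at small scales: for $n,m\in\N$ set $G_{n,m}:=\{x:\widetilde\mu(B(x,2r))\le n\,\widetilde\mu(B(x,r))\text{ whenever }0<r\le 1/m\}$, so that $\bigcup_{n,m}G_{n,m}=Z\setminus N_0$, and intersect further with a countable cover $\{U_k\}$ of $Z$ by balls of finite $\widetilde\mu$-measure (available since $\widetilde\mu$ is locally finite and $Z$ is separable). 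Given a set $A$ and a fine covering $\mathcal F$ of $A$, I would enumerate $\{G_{n,m}\cap U_k\}_{n,m,k}$ as $W_1,W_2,\dots$ and disjointify relative to $A$, i.e. $A_j:=(A\cap W_j)\setminus(W_1\cup\dots\cup W_{j-1})$. The $A_j$ are then pairwise disjoint, cover $A\setminus N_0$, each has finite $\widetilde\mu$-measure, and each lies in a single $G_{n_j,m_j}$ and a single $U_{k_j}$.

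The core is a one-scale claim: if $S\subset G_{n,m}\cap U$ with $\widetilde\mu(S),\widetilde\mu(U)<\infty$ and $U$ open, and $\mathcal F_0$ is a fine covering of $S$ by closed balls $\overline B(x,r)$ with $x\in S$, $r\le 1/(16m)$ and $\overline B(x,2r)\subset U$, then the $5r$-covering lemma yields a countable disjoint subfamily $\{\overline B_i=\overline B(x_i,r_i)\}_{i\in I}\subset\mathcal F_0$ such that every ball of $\mathcal F_0$ lies in some $5\overline B_i:=\overline B(x_i,5r_i)$; since $\mathcal F_0$ is \emph{fine} and $\bigcup_{i\in I'}\overline B_i$ is closed for finite $I'$, one gets $S\setminus\bigcup_{i\in I'}\overline B_i\subset\bigcup_{i\in I\setminus I'}5\overline B_i$. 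An elementary iteration of the doubling inequality gives $\widetilde\mu(5\overline B_i)\le n^3\widetilde\mu(\overline B_i)$ (the centres lie in $G_{n,m}$ and $16r_i\le 1/m$), while disjointness and $\overline B_i\subset\overline B(x_i,2r_i)\subset U$ give $\sum_i\widetilde\mu(\overline B_i)\le\widetilde\mu(U)<\infty$; hence $\widetilde\mu(S\setminus\bigcup_{i\in I'}\overline B_i)\le n^3\sum_{i\in I\setminus I'}\widetilde\mu(\overline B_i)$, which is $0$ for $I'=I$ and can be made arbitrarily small by a suitable \emph{finite} $I'$.

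Finally I would assemble these into a single globally disjoint subfamily of $\mathcal F$, and this is the only delicate step: the countable family obtained for $A_1$ has non-closed union, so one cannot directly continue with balls avoiding it when treating $A_2$. I would circumvent this by adjoining only finitely many balls per stage. Fix a surjection $t\mapsto j(t)$ of $\N$ onto $\N$ with all fibres infinite, and build finite disjoint subfamilies $\emptyset=\mathcal H_0\subset\mathcal H_1\subset\mathcal H_2\subset\dots$ of $\mathcal F$ as follows: at stage $t$, if $\widetilde\mu(A_{j(t)}\setminus\bigcup\mathcal H_{t-1})=0$ set $\mathcal H_t=\mathcal H_{t-1}$; otherwise, since $\bigcup\mathcal H_{t-1}$ is closed, the balls of $\mathcal F$ centred in $A_{j(t)}\setminus\bigcup\mathcal H_{t-1}$, of radius $\le 1/(16m_{j(t)})$, whose doubled ball lies in $U_{k_{j(t)}}$ and which miss $\bigcup\mathcal H_{t-1}$, still form a fine covering of $A_{j(t)}\setminus\bigcup\mathcal H_{t-1}$, so the one-scale claim yields a finite disjoint subfamily cutting that residual measure at least in half; let $\mathcal H_t$ be $\mathcal H_{t-1}$ together with it. Then $\mathcal G:=\bigcup_t\mathcal H_t\subset\mathcal F$ is countable and pairwise disjoint, and since each $j$ is revisited infinitely often, $\widetilde\mu(A_j\setminus\bigcup\mathcal G)=0$ for every $j$, whence $\widetilde\mu(A\setminus\bigcup\mathcal G)\le\widetilde\mu(N_0)+\sum_j\widetilde\mu(A_j\setminus\bigcup\mathcal G)=0$, which is the Vitali property. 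The remaining routine points I would check are: the scale iteration $\widetilde\mu(\overline B(x,5r))\le n^3\widetilde\mu(\overline B(x,r))$ for $x\in G_{n,m}$ and $16r\le 1/m$; that a disjoint family of balls in a separable space is countable once the $\widetilde\mu$-null ones are discarded; and that restricting a fine covering to balls of small radius whose doubles lie in a fixed open neighbourhood of the target set, or which miss a fixed closed set disjoint from it, leaves a fine covering.
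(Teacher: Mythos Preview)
The paper does not actually prove this proposition: it is stated with the sentence ``For a proof of the following fact, see e.g.\ \cite[Theorem 3.4.3]{HKSTbook}'' and no argument is given. So there is no in-paper proof to compare against; the result is simply quoted from the literature.

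Your proposal is correct and is essentially the standard proof one finds in the cited reference. The decomposition into the sets $G_{n,m}$ (uniform doubling at a fixed small scale), the localisation into balls $U_k$ of finite measure, the $5r$-covering argument with the $n^3$ comparison $\widetilde\mu(5\overline B_i)\le n^3\widetilde\mu(\overline B_i)$, and the tail-sum estimate are all exactly the standard steps. Your handling of the global assembly---adding only finitely many balls at each stage via a surjection $t\mapsto j(t)$ with infinite fibres, so that $\bigcup\mathcal H_{t-1}$ stays closed and one can legitimately restrict the fine cover to balls avoiding it---is a clean way to dodge the issue that a countable union of closed balls need not be closed; this is a slight variant of the usual iterative argument but entirely sound. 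The routine checks you flag (the $5r\le 8r$ iteration of doubling when $16r\le 1/m$, countability of disjoint ball families in separable spaces, and stability of fine covers under the stated restrictions) are all valid.
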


In our setting, we obtain the following.

\begin{lemma}
Let $\Om\subset X$ be open and suppose there exists a Borel regular outer measure $\widetilde{\mu}\ge \mu$
on $X$ that is doubling within $\Om$. Then $\mu$ is a Vitali measure in the metric space $(\Om,d)$.
\end{lemma}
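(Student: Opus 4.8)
The plan is to deduce the statement directly from Proposition~\ref{prop:mutilde measure}, applied not on $X$ but on the metric space $Z:=(\Om,d)$, together with the trivial monotonicity coming from $\mu\le\widetilde{\mu}$.

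First I would record that the restriction of $\widetilde{\mu}$ to $\Om$ is a Borel regular, locally finite outer measure on $Z=(\Om,d)$: Borel regularity and local finiteness are inherited from the corresponding properties of $\widetilde{\mu}$ on $X$ (recall that every ball has finite measure), and the balls of $Z$ are exactly the sets $B(x,r)\cap\Om$ with $x\in\Om$, which for small $r$ coincide with the balls $B(x,r)$ of $X$. Next I would verify the asymptotic doubling hypothesis of Proposition~\ref{prop:mutilde measure} for $\widetilde{\mu}$ on $Z$: given $x\in\Om$, choose $\delta>0$ with $B(x,\delta)\subset\Om$; then for every $r<\delta$ the ball $B(x,r)$ lies in $\Om$, so by the assumption that $\widetilde{\mu}$ is doubling within $\Om$ we have $0<\widetilde{\mu}(B(x,2r))\le C_d\,\widetilde{\mu}(B(x,r))<\infty$, whence
\[
\limsup_{r\to0}\frac{\widetilde{\mu}(B(x,2r))}{\widetilde{\mu}(B(x,r))}\le C_d<\infty.
\]
Thus the hypothesis holds at \emph{every} point of $Z$, in particular $\widetilde{\mu}$-a.e., and Proposition~\ref{prop:mutilde measure} gives that $\widetilde{\mu}$ is a Vitali measure in $Z=(\Om,d)$.

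It then remains to transfer the Vitali property from $\widetilde{\mu}$ to $\mu$. Let $A\subset\Om$ and let $\mathcal F$ be a fine covering of $A$ by closed balls of $(\Om,d)$. Applying the Vitali property of $\widetilde{\mu}$ just obtained, there is a pairwise disjoint subcollection $\mathcal G\subset\mathcal F$ with $\widetilde{\mu}\big(A\setminus\bigcup_{\overline B\in\mathcal G}\overline B\big)=0$. Since $\mu\le\widetilde{\mu}$ as outer measures, monotonicity yields $\mu\big(A\setminus\bigcup_{\overline B\in\mathcal G}\overline B\big)\le\widetilde{\mu}\big(A\setminus\bigcup_{\overline B\in\mathcal G}\overline B\big)=0$, which is exactly the condition of Definition~\ref{def:fine covering} for $\mu$ on $(\Om,d)$.

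I do not expect any genuine obstacle here; the only point that needs a word is the passage from ``doubling within $\Om$'' to the pointwise asymptotic doubling required by Proposition~\ref{prop:mutilde measure}, which is immediate once one restricts to radii small enough that the balls stay inside $\Om$, together with the observation that the covering proposition should be invoked in the metric space $(\Om,d)$ rather than in $X$.
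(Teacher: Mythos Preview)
Your proof is correct and follows exactly the same approach as the paper's: apply Proposition~\ref{prop:mutilde measure} in $(\Om,d)$ to conclude that $\widetilde{\mu}$ is a Vitali measure there, then use $\mu\le\widetilde{\mu}$ and Definition~\ref{def:fine covering} to pass the property to $\mu$. The paper's proof is terser (two sentences), whereas you spell out the verification of the asymptotic doubling hypothesis and the monotonicity step, but the content is the same.
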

Note that in Theorem \ref{thm:main theorem intro} we assume that $\widetilde{\mu}$ is doubling within
$2B_0$ with $\Om\subset B_0$, so then in particular $\widetilde{\mu}$ is doubling within $\Om$.
\begin{proof}
By Proposition \ref{prop:mutilde measure} we know that $\widetilde{\mu}$ is a Vitali measure
in the metric space $(\Om,d)$.
Since $\mu\le\widetilde{\mu}$, from Definition \ref{def:fine covering} it clearly follows that
$\mu$ is then also a Vitali measure in  $(\Om,d)$.
\end{proof}

We have the following Lebesgue--Radon--Nikodym differentiation theorem, see e.g.
\cite[p. 82]{HKSTbook}.

\begin{theorem}\label{thm:Radon-Nikodym}
Suppose $Z$ is equipped with a Vitali measure $\mu_0$, and let $\kappa$
be a Borel regular, locally finite measure on $Z$. Then there exists a decomposition of $\kappa$
into the absolutely continuous and singular parts
\[
d\kappa=d\kappa^a +d\kappa^s=a\,d\mu_0 +d\kappa^s,
\]
where
\begin{equation}\label{eq:kappa a}
a(x):=\lim_{r\to 0}\frac{\kappa(B(x,r))}{\mu_0(B(x,r))}
\quad\textrm{for }\mu_0\textrm{-a.e. }x\in Z.
\end{equation}
\end{theorem}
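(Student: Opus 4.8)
The plan is to obtain the decomposition first by abstract measure theory and then to identify the Radon--Nikodym density with the symmetric derivative $\lim_{r\to 0}\kappa(B(x,r))/\mu_0(B(x,r))$ using the Vitali covering property of $\mu_0$. First I would reduce to finite measures: since $Z$ is separable it is Lindel\"of, so it is a countable union of balls on each of which both $\mu_0$ and $\kappa$ are finite; in particular both are $\sigma$-finite on $Z$. The classical Radon--Nikodym--Lebesgue theorem (applied to the restrictions to the Borel $\sigma$-algebra, on which both outer measures are countably additive since they are Borel regular) then yields $\kappa=\kappa^a+\kappa^s$ with $\kappa^a\ll\mu_0$, $\kappa^s\perp\mu_0$, and $d\kappa^a=a\,d\mu_0$ for some $\mu_0$-measurable $a\ge 0$ that is integrable on balls; fix a Borel set $S$ with $\mu_0(S)=0$ and $\kappa^s(Z\setminus S)=0$. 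Since $\kappa(B(x,r))=\kappa^a(B(x,r))+\kappa^s(B(x,r))$, it remains to show that for $\mu_0$-a.e.\ $x$ one has $\lim_{r\to0}\kappa^a(B(x,r))/\mu_0(B(x,r))=a(x)$ and $\lim_{r\to0}\kappa^s(B(x,r))/\mu_0(B(x,r))=0$. Both are local assertions, so I may work inside a fixed ball $B$ with $\mu_0(B),\kappa(B)<\infty$ and then take a countable union of such balls; thus all measures occurring below may be assumed finite, hence (being finite Borel measures on a metric space) outer regular by open sets. The one covering ingredient is the following, immediate from Definition \ref{def:fine covering}: for any finite Borel measure $\lambda$ on $Z$, any open $U\subset Z$ and any $t>0$,
\[
\mu_0\Big(\big\{x\in U:\ \textstyle\limsup_{r\to 0}\tfrac{\lambda(B(x,r))}{\mu_0(B(x,r))}>t\big\}\Big)\le \tfrac1t\,\lambda(U),
\]
since the bad set $A_t$ lies in the open set $U$, the closed balls $\overline B(x,r)\subset U$ with $x\in A_t$ and $\lambda(\overline B(x,r))>t\,\mu_0(\overline B(x,r))$ form a fine covering of $A_t$, and a disjoint Vitali subcollection $\{\overline B_i\}$ with $\mu_0(A_t\setminus\bigcup_i\overline B_i)=0$ gives $\mu_0(A_t)\le\sum_i\mu_0(\overline B_i)\le t^{-1}\sum_i\lambda(\overline B_i)=t^{-1}\lambda(\bigcup_i\overline B_i)\le t^{-1}\lambda(U)$.

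For the singular part I would apply this with $\lambda=\kappa^s$ and $A:=Z\setminus S$, which has full $\mu_0$-measure and satisfies $\kappa^s(A)=0$. Given $t,\eps>0$, outer regularity of $\kappa^s$ produces an open $U\supset A$ with $\kappa^s(U)<\eps$; since $\{x:\limsup_{r\to0}\kappa^s(B(x,r))/\mu_0(B(x,r))>t\}\cap A\subset A\subset U$, the displayed inequality gives that this set has $\mu_0$-measure $\le\eps/t$, hence $0$. Letting $t=1/n$ we conclude $\limsup_{r\to0}\kappa^s(B(x,r))/\mu_0(B(x,r))=0$ for $\mu_0$-a.e.\ $x\in A$, and since $\mu_0(S)=0$ this holds $\mu_0$-a.e.\ in $Z$.

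For the absolutely continuous part the target is the Lebesgue-point statement $\lim_{r\to0}\vint{B(x,r)}|a-a(x)|\,d\mu_0=0$ for $\mu_0$-a.e.\ $x$, which immediately yields $\lim_{r\to0}\kappa^a(B(x,r))/\mu_0(B(x,r))=a(x)$. Taking $U=Z$ and $\lambda=|\phi|\,\mu_0$ in the displayed inequality gives the weak-type bound $\mu_0(\{x:\limsup_{r\to0}\vint{B(x,r)}|\phi|\,d\mu_0>t\})\le t^{-1}\Vert\phi\Vert_{L^1(\mu_0)}$ for $\phi\in L^1(\mu_0)$. Now approximate $a$ in $L^1(\mu_0)$ by bounded Lipschitz functions $h$ (these are dense for a finite Borel measure on a metric space, by regularity); continuity of $h$ gives $\vint{B(x,r)}|h-h(x)|\,d\mu_0\to0$ at every $x$, so
\[
\limsup_{r\to0}\vint{B(x,r)}|a-a(x)|\,d\mu_0\le \limsup_{r\to0}\vint{B(x,r)}|a-h|\,d\mu_0+|h(x)-a(x)|.
\]
Hence $\{x:\limsup_{r\to0}\vint{B(x,r)}|a-a(x)|\,d\mu_0>\eps\}$ is contained in the union of $\{x:\limsup_{r\to0}\vint{B(x,r)}|a-h|\,d\mu_0>\eps/2\}$ and $\{|h-a|>\eps/2\}$, whose $\mu_0$-measures are bounded by $(2/\eps)\Vert a-h\Vert_{L^1(\mu_0)}$ (by the weak-type bound and by Chebyshev, respectively); letting $h\to a$ in $L^1(\mu_0)$ shows this set is $\mu_0$-null, and a union over $\eps=1/n$ finishes the Lebesgue-point claim. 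Combining the two parts gives $\lim_{r\to0}\kappa(B(x,r))/\mu_0(B(x,r))=a(x)$ for $\mu_0$-a.e.\ $x$, which is \eqref{eq:kappa a}, completing the proof.

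None of these steps is deep; the only genuine subtlety is the singular-part estimate, where one must trap the $\mu_0$-full set $Z\setminus S$ inside an open set of small $\kappa^s$-measure so that the Vitali-covering balls can be chosen inside it — and this is exactly the place where the reduction to finite measures (hence outer regularity), which in turn uses separability of $Z$ and finiteness of $\mu_0,\kappa$ on balls, is needed. The covering step itself is a direct application of the hypothesized Vitali property of $\mu_0$.
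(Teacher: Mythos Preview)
The paper does not give its own proof of this theorem: it is stated as a known Lebesgue--Radon--Nikodym differentiation result and referenced to \cite[p.~82]{HKSTbook}. Your argument is correct and is essentially the standard textbook proof (abstract Lebesgue decomposition plus identification of the density via a weak-type $(1,1)$ estimate coming directly from the Vitali property, applied separately to the singular and absolutely continuous parts), so there is nothing substantive to compare. One small remark: when you pass from the hypothesis $\limsup_{r\to 0}\lambda(B(x,r))/\mu_0(B(x,r))>t$, stated for open balls, to a fine covering by \emph{closed} balls with $\lambda(\overline{B}(x,r))>t\,\mu_0(\overline{B}(x,r))$, a one-line continuity argument (using $B(x,r)=\bigcup_{s<r}\overline{B}(x,s)$ and monotone convergence for both measures) is needed; this is routine but worth making explicit.
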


Returning to our setting,
let $\Om\subset X$ be open and suppose there exists a Borel regular outer measure
$\widetilde{\mu}\ge \mu$ on $X$ which is doubling within $\Om$.
Let $f\colon \Om\to f(\Om)\subset Y$ be a homeomorphism, with $f(\Om)$ open.
We can now decompose
\[
df_{\#}\nu
=d(f_{\#}\nu)^a+d(f_{\#}\nu)^s
=a\,d\mu+d(f_{\#}\nu)^s \quad\textrm{in }\Om.
\]
By \eqref{eq:Jacobian} and \eqref{eq:kappa a} we know that in fact $J_f=a$.
Thus
\[
\nu(f(\Om))=(f_{\#}\nu)(\Om)\ge (f_{\#}\nu)^a(\Om)=\int_{\Om}J_f\,d\mu.
\]

We record:
\begin{equation}\label{eq:integral of Jf}
J_f\textrm{ exists }\mu\textrm{-a.e. in }\Om \quad \textrm{and}\quad \int_{\Om}J_f\,d\mu \le \nu(f(\Om)).
\end{equation}

Later we will use this together with the assumption $\nu(f(\Om))<\infty$.

The following covering results are from \cite{LaZh};
they are similar to Lemma 2.2 and Lemma 2.3 of \cite{BKR}.

\begin{lemma}[{\cite[Lemma 3.1]{LaZh}}]\label{lem:N countable collections}
	Let $A\subset X$ be bounded and metric doubling with constant $M$,
	and let $\mathcal F$ be a collection of balls $\{B(x,r_x)\}_{x\in A}$
	with radius at most $R>0$.
	Then there exist finite or countable subcollections $\mathcal G_1,\ldots,\mathcal G_N$,
	with $N=M^4$ and $\mathcal G_j=\{B_{j,l}=B(x_{j,l},r_{j,l})\}_{l}$,
	such that 
	\begin{enumerate}
		\item $A\subset \bigcup_{j=1}^N\bigcup_{l}B_{j,l}$;
		\item If $j\in\{1,\ldots,N\}$ and $l\neq m$, then $x_{j,l}\in X\setminus B_{j,m}$ and $x_{j,m}\in X\setminus B_{j,l}$;
		\item If $j\in\{1,\ldots,N\}$ and $l\neq m$, then $\tfrac 12  B_{j,l}\cap \tfrac 12  B_{j,m}=\emptyset$.
	\end{enumerate}
\end{lemma}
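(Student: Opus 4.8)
The plan is to prove a Besicovitch-type covering statement: decompose the radii into dyadic scales, run a greedy selection scale by scale so that the selected balls still cover $A$, and then colour the selected balls so that each colour class satisfies (2) and (3). Two preliminary observations I would record first: since $A$ is bounded and metric doubling, $(A,d)$ is totally bounded, hence separable; and for any fixed $\delta>0$, any subset of $A$ whose points are pairwise more than $\delta$ apart is finite, as it meets each of the finitely many sets of diameter $<\delta$ covering $A$ in at most one point. I would also partition $A$ into the scales $S_k:=\{x\in A:\ R2^{-k-1}<r_x\le R2^{-k}\}$, $k=0,1,2,\dots$, which cover $A$ because $0<r_x\le R$.

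Next I would run the greedy selection. Set $A^{(0)}:=A$; given $A^{(k)}$, let $T_k:=A^{(k)}\cap S_k$, use Zorn's lemma to pick a maximal subset $P_k\subset T_k$ whose points are pairwise more than $R2^{-k-1}$ apart, and set $A^{(k+1)}:=A^{(k)}\setminus\bigcup_{x\in P_k}B(x,r_x)$. Maximality forces each $y\in T_k$ to lie within $R2^{-k-1}<r_z$ of some $z\in P_k$, hence $y\in B(z,r_z)$, so $T_k\subset\bigcup_{x\in P_k}B(x,r_x)$. Any $y\in A$ lies in some $S_{k_0}$ and is therefore covered either by a ball removed at an earlier step or, if it survives into $T_{k_0}$, by a ball selected at step $k_0$. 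Thus, with $P:=\bigcup_{k\ge0}P_k$ (a countable union of finite sets, hence countable), the balls $\{B(x,r_x):x\in P\}$ cover $A$, giving conclusion (1).

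Then I would split $P$ into few families obeying (2); since (2) says $d(x,y)\ge\max(r_x,r_y)\ge\tfrac12 r_x+\tfrac12 r_y$ for distinct centres, it forces (3) as well. Form the conflict graph on $P$, joining $x\ne y$ when $d(x,y)<\max(r_x,r_y)$, i.e.\ when $x\in B(y,r_y)$ or $y\in B(x,r_x)$; I would show its maximum degree is at most $M^2-1$. First, a conflict can only happen within a single scale: if $x\in P_{k_0}$, $y\in P_k$ and $k>k_0$, then $r_y\le R2^{-k}\le R2^{-k_0-1}<r_x$, so a conflict would give $y\in B(x,r_x)$; but $B(x,r_x)$ was removed at step $k_0$, so it is disjoint from $A^{(k_0+1)}\supset A^{(k)}\ni y$, a contradiction; the case $k<k_0$ is symmetric, using $r_x\le R2^{-k_0}\le R2^{-k-1}<r_y$ and the fact that $x$ survived step $k$. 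Second, within $P_{k_0}$ any conflict-neighbour $y$ of $x$ satisfies $d(x,y)<\max(r_x,r_y)\le R2^{-k_0}$, and $B(x,R2^{-k_0})$ can be covered by $M^2$ balls of radius $R2^{-k_0-2}$, none of which can contain two points of $P_{k_0}$, so $x$ has at most $M^2-1$ conflict-neighbours. As $P$ is countable, greedy colouring then partitions it into at most $M^2\le M^4$ conflict-free classes; taking $\mathcal G_j$ to be the balls over the $j$-th class (and $\mathcal G_j:=\emptyset$ for any surplus index up to $N=M^4$) yields $d(x,y)\ge\max(r_x,r_y)$ for distinct centres in a common $\mathcal G_j$, i.e.\ (2), hence also (3); together with (1) this finishes the proof.

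The step I expect to be the main obstacle is the claim that conflicts occur only within a single scale, and in particular the case $k<k_0$: a ball from a coarse scale is large and can genuinely overlap a centre from a fine scale, so the exclusion is not purely metric — it crucially uses the greedy deletion (the fine centre \emph{survived} the coarse step) together with the comparison of radii across scales. The rest is routine doubling bookkeeping, and I would in fact expect the exponent to come out as $M^2$ rather than $M^4$ along the way, which of course still suffices.
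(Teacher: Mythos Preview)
The paper does not actually prove this lemma; it is quoted verbatim from the authors' earlier work \cite[Lemma~3.1]{LaZh}, so there is no in-paper proof to compare against. Your argument stands on its own and is correct: the dyadic stratification of radii, the greedy deletion that forces conflicts to occur only within a single scale, and the doubling-based degree bound all go through as you describe. The two inter-scale cases are handled symmetrically and correctly via the survival property of the greedy process, and the intra-scale bound using $M^2$ balls of radius $R2^{-k_0-2}$ is the right count (two points of $P_{k_0}$ in such a ball would be at distance $<R2^{-k_0-1}$, contradicting the separation). Your observation that (2) $\Rightarrow$ (3) via $d(x,y)\ge\max(r_x,r_y)\ge\tfrac12 r_x+\tfrac12 r_y$ is also fine.

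As you suspected, your argument actually yields $N=M^2$ colour classes rather than $M^4$; the stated exponent in the lemma is simply not sharp for your construction, and $M^2\le M^4$ suffices. One cosmetic remark: invoking Zorn's lemma to obtain $P_k$ is unnecessary, since any $R2^{-k-1}$-separated subset of the totally bounded set $A$ is automatically finite, so a maximal one exists by elementary means --- but this does no harm.
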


\begin{lemma}[{\cite[Lemma 3.2]{LaZh}}]\label{lem:disjoint images}
	Consider a collection of balls $\{B_l=B(x_l,r_l)\}_{l}$ contained in an open set
	$\Om\subset X$,
	such that for all $l\neq m$ we have $x_l\notin B_m$
	and $x_m\notin B_l$. Also consider an injection $f\colon \Om\to Y$ such that for all $l$ there is
	$H_l\ge 1$ such that 
	\[
	B\left(f(x_l),\frac{L_f(x_l,r_l)}{H_l}\right)\subset f(B_l).
	\]
	Then
	\[
	B\left(f(x_l),\frac{L_f(x_l,r_l)}{2H_l}\right)\cap B\left(f(x_m),\frac{L_f(x_m,r_m)}{2H_m}\right)=\emptyset
	\quad\textrm{for all }l\neq m.
	\]
\end{lemma}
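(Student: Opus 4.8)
The plan is to argue directly from the hypotheses, exploiting that the balls $B(f(x_l), L_f(x_l,r_l)/H_l)$ sit inside the images $f(B_l)$, but that these images need not be disjoint. First I would fix $l\neq m$ and suppose, toward a contradiction, that there is a point
\[
z\in B\!\left(f(x_l),\frac{L_f(x_l,r_l)}{2H_l}\right)\cap B\!\left(f(x_m),\frac{L_f(x_m,r_m)}{2H_m}\right).
\]
Without loss of generality assume $L_f(x_l,r_l)/H_l\le L_f(x_m,r_m)/H_m$; write $\rho_l:=L_f(x_l,r_l)/H_l$ and $\rho_m:=L_f(x_m,r_m)/H_m$, so $\rho_l\le\rho_m$. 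From the triangle inequality, $d_Y(f(x_l),f(x_m))\le d_Y(f(x_l),z)+d_Y(z,f(x_m))< \rho_l/2+\rho_m/2\le \rho_m$. Hence $f(x_l)\in B(f(x_m),\rho_m)$, and by the hypothesis $B(f(x_m),\rho_m)\subset f(B_m)$ we conclude $f(x_l)\in f(B_m)$. Since $f$ is injective this forces $x_l\in B_m$, contradicting the assumption $x_l\notin B_m$. Therefore the two balls are disjoint, as claimed.

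The only point that requires a little care is the reduction ``without loss of generality $\rho_l\le\rho_m$'': the statement is symmetric in $l$ and $m$, so relabeling is harmless, and the bound $\rho_l/2+\rho_m/2\le\rho_m$ is then immediate. One should also note that the hypothesis is stated for \emph{open} balls $B(\cdot,\cdot)$ in $Y$, so the strict inequalities above are exactly what is needed to land inside $f(B_m)$; no boundary issues arise.

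I do not expect any genuine obstacle here: the lemma is a short elementary consequence of the triangle inequality together with injectivity of $f$ and the ``non-center'' condition $x_l\notin B_m$, $x_m\notin B_l$. The one thing to be vigilant about is that we only use the containment $B(f(x_m),\rho_m)\subset f(B_m)$ for the \emph{larger} of the two radii, which is why the comparison $\rho_l\le\rho_m$ must be made explicit before invoking the hypothesis.
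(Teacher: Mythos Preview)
Your argument is correct. Note that the present paper does not actually supply a proof of this lemma; it is quoted verbatim from \cite[Lemma~3.2]{LaZh}, so there is no in-paper proof to compare against. That said, the elementary contradiction you give---reducing to $\rho_l\le\rho_m$, applying the triangle inequality to get $f(x_l)\in B(f(x_m),\rho_m)\subset f(B_m)$, and then invoking injectivity to force $x_l\in B_m$---is exactly the natural proof, and is essentially what appears in the cited reference. There is nothing to add.
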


The following two lemmas concerning absolute continuity on curves are essentially well known,
see e.g. Z\"urcher \cite[Lemma 3.6]{Zu},
but we do not know a source for the precise formulations that we need, so we provide full proofs.
Recall the definition of $\lip_h$ from \eqref{eq:lip}.

\begin{lemma}\label{lem:abs cont initial}
	Let $-\infty<a<b<\infty$.
Let $h\colon [a,b]\to Y$ be a continuous mapping such that $\lip_h(t)<\infty$
for all $x\in A$ for some $\mathcal L^1$-measurable set $A\subset [a,b]$.
Then
\[
\mathcal H^1(h(A))\le \int_{A}\lip_h(t)\,dt.
\]
\end{lemma}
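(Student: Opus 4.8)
Here is how I would approach the proof.

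\emph{Strategy.} The hypothesis is only that $\lip_h(t)<\infty$, and since $\lip_h$ is a $\liminf$, this merely says that the oscillation of $h$ near $t$ is controlled along \emph{some} sequence of radii shrinking to $0$; there is no uniform modulus of continuity on a fixed scale. So the plan is to replace Lipschitz estimates by covering arguments: one produces fine covers of $A$ by intervals on which $h$ has controlled oscillation, and uses the basic $5r$-covering lemma together with the Vitali covering theorem. Here the Lebesgue measure on $[a,b]$, viewed as a metric space, is locally finite and satisfies the doubling-at-a-point condition, hence is a Vitali measure by Proposition \ref{prop:mutilde measure} and Definition \ref{def:fine covering}. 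At the outset I would discard the two endpoints (whose image is two points, of zero $\mathcal H^1$-measure) and assume $A\subset(a,b)$, so that all sufficiently small balls are genuine intervals contained in $(a,b)$.

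\emph{Step 1: images of null sets.} First I would prove the special case: if $N\subset(a,b)$ has $\mathcal L^1(N)=0$ and $\lip_h<\infty$ on $N$, then $\mathcal H^1(h(N))=0$. Write $N=\bigcup_{m\in\N}N_m$ with $N_m:=\{t\in N:\lip_h(t)<m\}$; by countable subadditivity of $\mathcal H^1$ it suffices to treat one $N_m$. Fix $\delta,\eta>0$ and an open $U$ with $N_m\subset U\subset(a,b)$ and $\mathcal L^1(U)<\eta$. Using $\lip_h(t)<m$ (and passing from the open balls of the definition to slightly smaller closed ones, which changes nothing but an absolute constant), for each $t\in N_m$ there are arbitrarily small $r>0$ with $\overline B(t,5r)\subset U$, $5r\le\delta$, and $\sup_{y\in\overline B(t,5r)}d_Y(h(y),h(t))<Cmr$; these $\overline B(t,r)$ form a fine cover of $N_m$. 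The $5r$-covering lemma yields a disjoint subfamily $\{\overline B(t_i,r_i)\}$ with $N_m\subset\bigcup_i\overline B(t_i,5r_i)$, so the sets $h(\overline B(t_i,5r_i))$ cover $h(N_m)$, each has diameter $<2Cmr_i\le 2Cm\delta/5$, and $\sum_i 2r_i=\sum_i\mathcal L^1(\overline B(t_i,r_i))\le\mathcal L^1(U)<\eta$ by disjointness. Hence $\mathcal H^1_{2Cm\delta/5}(h(N_m))\le Cm\eta$; letting $\eta\to0$ with $\delta$ fixed, then $\delta\to0$, gives $\mathcal H^1(h(N_m))=0$. The constants are irrelevant here, which is why the crude $5r$-covering suffices.

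\emph{Step 2: the general estimate.} Fix $\lambda>1$ and partition $A=A_0\cup\bigcup_{k\in\Z}A_k$, where $A_0:=\{t\in A:\lip_h(t)=0\}$ and $A_k:=\{t\in A:\lambda^k\le\lip_h(t)<\lambda^{k+1}\}$; these are $\mathcal L^1$-measurable since $\lip_h$ is Borel and $A$ is measurable, and the union is disjoint because $\lip_h<\infty$ on $A$. Fix $k$, take $\delta,\eta>0$, and choose an open $U_k$ with $A_k\subset U_k\subset(a,b)$ and $\mathcal L^1(U_k)<\mathcal L^1(A_k)+\eta 2^{-|k|}$. Since $\lip_h<\lambda^{k+1}$ on $A_k$, a routine limiting argument (again passing from open to closed balls, now keeping the \emph{same} constant) shows that the balls $\overline B(t,r)$ with $t\in A_k$, $\overline B(t,r)\subset U_k$, $r\le\delta$, and $\sup_{y\in\overline B(t,r)}d_Y(h(y),h(t))<\lambda^{k+1}r$ form a fine cover of $A_k$. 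By the Vitali covering theorem there is a disjoint subfamily $\{\overline B(t_i,r_i)\}$ with $\mathcal L^1(A_k\setminus\bigcup_i\overline B(t_i,r_i))=0$, and the leftover $N_k:=A_k\setminus\bigcup_i\overline B(t_i,r_i)$ satisfies $\mathcal L^1(N_k)=0$ and $N_k\subset A$, so $\mathcal H^1(h(N_k))=0$ by Step 1. Since $h(A_k)\subset h(N_k)\cup\bigcup_i h(\overline B(t_i,r_i))$ and $h(\overline B(t_i,r_i))\subset\overline B_Y(h(t_i),\lambda^{k+1}r_i)$,
\[
\mathcal H^1_{2\lambda^{k+1}\delta}(h(A_k))\le\sum_i 2\lambda^{k+1}r_i=\lambda^{k+1}\sum_i\mathcal L^1(\overline B(t_i,r_i))\le\lambda^{k+1}\bigl(\mathcal L^1(A_k)+\eta 2^{-|k|}\bigr).
\]
Letting $\delta\to0$ then $\eta\to0$ gives $\mathcal H^1(h(A_k))\le\lambda^{k+1}\mathcal L^1(A_k)\le\lambda\int_{A_k}\lip_h\,dt$, using $\lambda^{k+1}\le\lambda\lip_h$ on $A_k$. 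The set $A_0$ is handled identically with $\lambda^{k+1}$ replaced by an arbitrary $\eps>0$, giving $\mathcal H^1(h(A_0))\le\eps(b-a)$ and hence $\mathcal H^1(h(A_0))=0$. Summing,
\[
\mathcal H^1(h(A))\le\mathcal H^1(h(A_0))+\sum_{k\in\Z}\mathcal H^1(h(A_k))\le\lambda\int_A\lip_h\,dt,
\]
and letting $\lambda\to1$ completes the proof.

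\emph{Main obstacle.} The difficulty I anticipate is not the covering estimates themselves but the Lebesgue-null set $N_k$ that the Vitali subfamily leaves uncovered: a priori $h(N_k)$ could carry positive $\mathcal H^1$-measure, and this is exactly what Step 1 rules out. One cannot simply use the $5r$-covering lemma for $A_k$ directly, because the resulting factor of $5$ survives the limit $\lambda\to1$ and spoils the constant $1$ in the statement; so Step 2 genuinely needs the Vitali covering theorem, and hence needs Step 1 to absorb its error set. A secondary technical point, worth stating carefully, is the passage from the open balls in the definition of $\lip_h$ to closed balls with the same constant, which is the soft monotonicity fact that a supremum over an open ball equals the supremum over the closed balls it contains.
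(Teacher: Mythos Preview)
Your proof is correct and follows the same two-step skeleton as the paper: first show that $\mathcal L^1$-null subsets of $A$ have $\mathcal H^1$-null image (your Step~1, the paper's ``absolute continuity'' \eqref{eq:absolute continuity}), and then use the Vitali covering theorem plus that null-set property to get the sharp constant~$1$. The technical execution differs in two harmless ways. For the crude bound, the paper exploits that on the line one can extract a subcover with overlap at most~$2$, yielding constant~$4$, while you use the $5r$-covering lemma; either device is fine since the constant is thrown away. For the sharp bound, the paper works at Lebesgue points of $\lip_h\ch_A$, choosing $r_t$ so that $\mathcal H^1_\eps(h(\overline{B(t,r_t)}))\le(1+\eps)\int_{B(t,r_t)\cap A}(\lip_h+\eps)\,ds$ and summing, whereas you partition by $\lambda$-dyadic level sets of $\lip_h$ and let $\lambda\to1$. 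Your route is slightly more elementary in that it avoids invoking the Lebesgue differentiation theorem, at the cost of an extra layer of summation; the paper's route is a bit shorter once Lebesgue points are available. Both are standard and equivalent in strength.
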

\begin{proof}
We can assume that $A\subset (a,b)$.
Define $\lip_h^{\vee}:=\max\{\lip_h,1\}$.
We let
\[
A_k:=\{t\in A\colon 2^{k-1}\le \lip_h^{\vee} <2^{k}\},\quad k\in\N,
\]
so that $\bigcup_{k=1}^{\infty}A_k=A$.
Fix $\delta>0$.
For each $k\in\N$, choose an open set $U_k$ with $A_k\subset U_k\subset (a,b)$ and
\[
\mathcal L^1(U_k)\le \mathcal L^1(A_k)+\frac{\delta}{2^{2k}}.
\]
For every $t\in A_k$
we note that for all $r>0$ the
set $h(B(t,r))$ is contained in the ball $B(h(t),\sup_{s\in B(t,r)}d_Y(h(s),h(t)))$,
and so we can estimate
\begin{align*}
\liminf_{r\to 0}\frac{\mathcal H_{\delta}^1(h(B(t,r)))}{r}
&\le 2\liminf_{r\to 0}\frac{\sup_{s\in B(t,r)}d_Y(h(s),h(t))}{r}\\
&= 2\lip_h(t)
< 2^{k+1}.
\end{align*}
Thus for each $k\in\N$ and every $t\in A_k$, we find $r_t>0$ such that
\[
B(t,r_t)\subset U_k\quad\textrm{and}\quad \mathcal H_{\delta}^1(h(B(t,r_t)))\le 2^{k+1}r_t.
\]
We can choose
a countable subcollection $\{B_j=B(t_j,r_j)\}_{j=1}^{\infty}$ that covers $A$
and with overlap at most $2$. Thus
\begin{align*}
\mathcal H^1_{\delta}(h(A))
\le \sum_{k=1}^{\infty}\mathcal H^1_{\delta}(h(A_k))
&\le \sum_{k=1}^{\infty}\,\sum_{j\in\N\colon x_j\in A_k}\mathcal H^1_{\delta}(h(B_j))\\
&\le \sum_{k=1}^{\infty}2^{k+1}\sum_{j\in\N\colon x_j\in A_k}r_j\\
&\le \sum_{k=1}^{\infty}2^{k+1}\mathcal L^1(U_k)\\
&\le \sum_{k=1}^{\infty}2^{k+1}\left(\mathcal L^1(A_k)+\frac{\delta}{2^{2k}}\right)\\
&\le 4\int_{A}\lip_h^{\vee}\,dt+2\delta.
\end{align*}
Letting $\delta\to 0$, we get
\[
\mathcal H^1(h(A))\le 4\int_{A}\lip_h^{\vee}\,dt.
\]
This is rather close to the desired result, but we only use this to conclude the following
absolute continuity:
\begin{equation}\label{eq:absolute continuity}
\textrm{if }N\subset A\textrm{ with }\mathcal L^1(N)=0,\textrm{ then }\mathcal H^1(h(N))=0.
\end{equation}
Now fix $\eps>0$.
We can assume that $\lip_h\ch_A\in L^1([a,b])$.
For every $t\in A$, we have
\[
\liminf_{r\to 0}\frac{\mathcal H_{\eps}^1(h(\overline{B(t,r)}))}{r}
\le 2\liminf_{r\to 0}\frac{\sup_{s\in B(t,r)}d_Y(h(s),h(t))}{r}
= 2\lip_h(t).
\]
Thus for every Lebesgue point $t\in A$ of the function of $\lip_h \ch_{A}$, we find an
arbitrarily small radius
$r_t$ such that
\[
\mathcal H_{\eps}^1(h(\overline{B(t,r_t)}))\le 2(\lip_h(t)+\eps)r_t
\le (1+\eps)\int_{B(t,r_t)\cap A}(\lip_h+\eps)\,ds.
\]
By the Vitali covering theorem
(recall Proposition \ref{prop:mutilde measure}),
we find a collection $\{\overline{B_k}=\overline{B(x_k,r_k)}\}_{k=1}^{\infty}$
of disjoint balls covering $A\setminus N$ for some $N\subset A$ with $\mathcal L^1(N)=0$.
Then
\begin{align*}
\mathcal H^1_{\eps}(h(A))
&\le \mathcal H^1_{\eps}(h(A\setminus N))+\mathcal H^1_{\eps}(h(N))\\
&\le \sum_{k=1}^{\infty}\mathcal H^1_{\eps}(h(\overline{B_k}))+0\quad\textrm{by }\eqref{eq:absolute continuity}\\
&\le \sum_{k=1}^{\infty}(1+\eps)\int_{B_k\cap A}(\lip_h+\eps)\,ds\\
&= (1+\eps) \int_{A}(\lip_h+\eps)\,ds.
\end{align*}
Letting $\eps\to 0$, we get the result.
\end{proof}

\begin{lemma}\label{lem:abs cont}
Let $\Om\subset X$ be open,
let $f\colon \Om\to Y$ be continuous, and
let $\gamma\colon [0,\ell_{\gamma}]\to \Om$ be a curve
such that $\lip_f(\gamma(t))<\infty$
for all $t\in A\subset [0,\ell_{\gamma}]$, where $A$ is $\mathcal L^1$-measurable.
Then
\[
\mathcal H^1(f(\gamma(A)))\le \int_{A}\lip_f(\gamma(t))\,dt.
\]
\end{lemma}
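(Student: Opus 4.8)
The plan is to deduce Lemma \ref{lem:abs cont} from Lemma \ref{lem:abs cont initial} by applying the latter to the composition $h:=f\circ\gamma$ on the interval $[a,b]=[0,\ell_\gamma]$. The one point that needs care is the relationship between $\lip_h(t)$ and $\lip_f(\gamma(t))$: since $\gamma$ is arc-length parametrized, it is $1$-Lipschitz, i.e.\ $d(\gamma(s),\gamma(t))\le|s-t|$ for all $s,t$. Hence for every $t$ and every $r>0$ we have $\gamma(B(t,r))\subset B(\gamma(t),r)$ (balls here in $[0,\ell_\gamma]$ and in $X$ respectively), so
\[
\sup_{s\in B(t,r)}\frac{d_Y(h(s),h(t))}{r}=\sup_{s\in B(t,r)}\frac{d_Y(f(\gamma(s)),f(\gamma(t)))}{r}\le \sup_{y\in B(\gamma(t),r)}\frac{d_Y(f(y),f(\gamma(t)))}{r}.
\]
Taking $\liminf_{r\to 0}$ on both sides and recalling the definition \eqref{eq:lip} gives $\lip_h(t)\le\lip_f(\gamma(t))$ for every $t\in[0,\ell_\gamma]$. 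In particular $\lip_h(t)<\infty$ for all $t\in A$, so $h$ and the set $A$ satisfy the hypotheses of Lemma \ref{lem:abs cont initial}.

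Next I would simply invoke Lemma \ref{lem:abs cont initial} for $h$ and $A$, obtaining
\[
\mathcal H^1(h(A))\le\int_A\lip_h(t)\,dt.
\]
Since $h(A)=f(\gamma(A))$ and $\lip_h(t)\le\lip_f(\gamma(t))$ pointwise on $A$, combining the two displays yields
\[
\mathcal H^1(f(\gamma(A)))=\mathcal H^1(h(A))\le\int_A\lip_h(t)\,dt\le\int_A\lip_f(\gamma(t))\,dt,
\]
which is exactly the asserted inequality. (If the right-hand side is infinite there is nothing to prove; otherwise the above chain is legitimate since all integrands are nonnegative Borel functions — $\lip_f$ is Borel by the remark after \eqref{eq:lip}, $\gamma$ is continuous, and $\lip_h$ is Borel by the same remark applied to $h$.)

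There is essentially no obstacle here beyond the elementary domination $\lip_h\le\lip_f\circ\gamma$; the substance of the argument is entirely contained in Lemma \ref{lem:abs cont initial}, whose proof already handled the Vitali-covering and Hausdorff-content bookkeeping. One should only be slightly careful that Lemma \ref{lem:abs cont initial} was stated for maps on a closed bounded interval, which matches $[0,\ell_\gamma]$ exactly, and that measurability of $A$ is preserved (it is assumed in the hypothesis and used verbatim). Thus the proof is a short two-step reduction: establish the pointwise inequality between the two "lip" quantities using that arc-length parametrization is $1$-Lipschitz, then quote Lemma \ref{lem:abs cont initial}.
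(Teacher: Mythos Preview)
Your proof is correct and follows essentially the same approach as the paper: define $h=f\circ\gamma$, use that the arc-length parametrized curve $\gamma$ is $1$-Lipschitz to obtain $\lip_h(t)\le\lip_f(\gamma(t))$, and then invoke Lemma~\ref{lem:abs cont initial}.
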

\begin{proof}
For the mapping $h:=f\circ \gamma\colon [0,\ell_{\gamma}]\to Y$, by the fact that $\gamma$
is a $1$-Lipschitz mapping, for all $t\in A$ we have
\begin{align*}
\lip_h(t)
&=\liminf_{r\to 0}\sup_{s\in B(t,r)}\frac{d_Y(f\circ \gamma(s),f\circ \gamma(t))}{r}\\
&\le \liminf_{r\to 0}\sup_{y\in B(\gamma(t),r)}\frac{d_Y\big(f(y),f (\gamma(t))\big)}{r}\\
&= \lip_f(\gamma(t)).
\end{align*}
Now the result follows from Lemma \ref{lem:abs cont initial}.
\end{proof}

We will use the following theorem of Williams.
The symbol $g_f$ denotes the minimal $Q$-weak upper gradient of $f$ in $Z$.

\begin{theorem}[{\cite[Theorem 1.1]{Wi12}}]\label{thm:Williams QC theorem}
Let $1<Q<\infty$; let $Z$ and $W$ be separable, locally finite metric measure spaces;
and let $f\colon Z\to W$ be a homeomorphism. Then the following two conditions are equivalent,
with the same constant $K$:
\begin{enumerate}
	\item $f\in N_{\loc}^{1,Q}(Z;W)$ and for $\mu$-a.e. $x\in Z$,
	\[
	g_f(x)^Q\le K J_f(x);
	\]
	\item For every family $\Gamma$ of curves in $Z$,
	\[
	\Mod_Q(\Gamma)\le K\Mod_Q(f(\Gamma)).
	\]
\end{enumerate}
\end{theorem}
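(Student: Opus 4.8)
Since this statement is quoted from Williams, I sketch how I would establish the equivalence. The plan for $(1)\Rightarrow(2)$ is a change-of-variables argument along curves. Fix a curve family $\Gamma$ in $Z$ and a Borel function $\rho\colon W\to[0,\infty]$ admissible for $f(\Gamma)$. Because $g_f$ is the minimal $Q$-weak upper gradient of $f$, it satisfies the upper gradient inequality along every subcurve of $Q$-a.e.\ curve $\gamma$, and I would use this to conclude that for such $\gamma$ the composition $f\circ\gamma$ is rectifiable with metric speed at most $g_f(\gamma(t))$ for a.e.\ $t$. A change of variables in the line integral then gives, for $Q$-a.e.\ $\gamma\in\Gamma$,
\[
1\le\int_{f\circ\gamma}\rho\,ds\le\int_0^{\ell_\gamma}\rho(f(\gamma(t)))\,g_f(\gamma(t))\,dt=\int_\gamma(\rho\circ f)\,g_f\,ds,
\]
so $(\rho\circ f)\,g_f$ is admissible for $Q$-a.e.\ curve of $\Gamma$. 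Combining this with $g_f^Q\le KJ_f$, the Radon--Nikodym bound $J_f\,d\mu\le df_\#\nu$ from Theorem \ref{thm:Radon-Nikodym}, and the defining property of the pushforward, I obtain
\[
\Mod_Q(\Gamma)\le\int_Z(\rho\circ f)^Q g_f^Q\,d\mu\le K\int_Z(\rho\circ f)^Q\,df_\#\nu=K\int_W\rho^Q\,d\nu,
\]
and taking the infimum over admissible $\rho$ yields $(2)$.

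For $(2)\Rightarrow(1)$ the plan is as follows. Working on bounded pieces, $\int J_f\,d\mu\le\nu(f(\Om))<\infty$ by \eqref{eq:integral of Jf}, so $g:=(KJ_f)^{1/Q}\in L^Q_{\loc}$; this is the only candidate upper gradient compatible with the claimed pointwise bound $g_f^Q\le KJ_f$. I would then show that $g$ is a $Q$-weak upper gradient of $f$, i.e.\ that for $Q$-a.e.\ curve $\gamma$ in $Z$ the curve $f\circ\gamma$ is rectifiable and
\[
d_Y(f(\gamma(0)),f(\gamma(\ell_\gamma)))\le\int_\gamma g\,ds.
\]
The strategy is to argue by contradiction: if the family of curves in $Z$ on which this inequality fails had positive $Q$-modulus, then using the decomposition $df_\#\nu=J_f\,d\mu+d(f_\#\nu)^s$ together with a Vitali covering argument in $W$ one transfers an admissible function for the image family back to $Z$ with controlled $L^Q(\mu)$-norm, contradicting $(2)$. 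One also has to rule out, again using $(2)$, that $f\circ\gamma$ fails to be rectifiable on a curve family of positive modulus.

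I expect the $(2)\Rightarrow(1)$ step to be the main obstacle: it is the ``geometric implies analytic'' direction, and it must be carried out with no Ahlfors regularity, doubling, or Poincar\'e hypothesis on $Z$ or $W$, only separability and local finiteness. In particular both the good behaviour of the volume derivative $J_f$ (through a differentiation theorem not relying on doubling) and the absolute continuity of $f$ on $Q$-a.e.\ curve must be squeezed out of the modulus inequality alone, which is the technical core of Williams' argument.
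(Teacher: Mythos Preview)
The paper does not prove this theorem at all: it is quoted verbatim from Williams \cite{Wi12} and used as a black box, and only the implication $(1)\Rightarrow(2)$ is ever applied (see the remarks immediately following the statement). So there is no ``paper's own proof'' to compare your proposal against.

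That said, a brief comment on your sketch. Your outline of $(1)\Rightarrow(2)$ is the standard change-of-variables argument and is essentially how Williams proceeds; the one caveat is that you invoke Theorem~\ref{thm:Radon-Nikodym} to get $J_f\,d\mu\le df_\#\nu$, but that theorem requires $\mu$ to be a Vitali measure, which in the present paper is guaranteed by the doubling majorant $\widetilde{\mu}$ but is \emph{not} assumed in Williams' bare setting of separable, locally finite spaces. Williams handles this differently, so if you were writing a self-contained proof you would need to justify the inequality $\int_Z(\rho\circ f)^Q J_f\,d\mu\le\int_W\rho^Q\,d\nu$ without appealing to a Vitali-type differentiation theorem for $\mu$. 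For the purposes of this paper the point is moot, since the authors only apply $(1)\Rightarrow(2)$ in a setting where the Vitali property is available. Your description of $(2)\Rightarrow(1)$ correctly identifies it as the hard direction, but again the paper neither proves nor uses it.
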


Note that $f(\Gamma)$ means the curves $f\circ\gamma$,
$\gamma\in\Gamma$, reparametrized by arc-length.
We will apply the implication $(1)\Rightarrow (2)$ with the choices $Z=\Om\subset X$
and $W=f(\Om)\subset Y$.
In \cite{Wi12} it is additionally assumed that the supports of the measures that
$Z$ and $W$ are equipped with are the entire spaces, but this is not needed in the proof of
$(1)\Rightarrow (2)$.

We define the Hardy--Littlewood maximal function of a locally integrable nonnegative function
$g\in L^1_{\loc}(X)$ by
\[
Mg(x):=\sup_{0<r<\infty}\,\vint{B(x,r)}g\,d\mu,\quad x\in X.
\]

The following fact is well known, but we present it in a slightly different form than what is usual,
so we also sketch a proof.
Recall the Poincar\'e inequality from \eqref{eq:Poincare}.

\begin{proposition}\label{prop:p to q}
	Let $1\le p< q<\infty$.
	Suppose $X$ is proper
	and supports a
	$(1,p)$-Poincar\'e inequality, and $\mu$ is doubling.
Let $\Om\subset X$ be open and suppose $f\colon \Om\to Y$ is continuous and that
$g\in L^q(\Om)$ is a $p$-weak upper gradient of $f$ in $\Om$. Then
$f\in D^q(\Om;Y)$.
\end{proposition}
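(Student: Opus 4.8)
The plan is to upgrade a $p$-weak upper gradient that happens to lie in $L^q$ to an actual $L^q$ upper gradient, using the self-improvement of the Poincaré inequality together with a maximal-function telescoping argument. First I would observe that it suffices to produce, for each ball, \emph{some} upper gradient of $f$ in $L^q$; since the statement is about $D^q(\Om;Y)$, which only requires an $L^q$ upper gradient, I do not even need to keep the same $g$. The key point is the classical fact (Hajłasz--Koskela, Keith--Zhong) that a proper doubling space supporting a $(1,p)$-Poincaré inequality in fact supports a $(1,p_0)$-Poincaré inequality for some $p_0<p$; but actually for this proposition the cleaner route is the pointwise estimate that for a function $u$ with $p$-weak upper gradient $\rho$, one has the Hajłasz-type inequality
\[
|u(x)-u(y)|\le C\, d(x,y)\big((M_{2\lambda d(x,y)}\rho^p(x))^{1/p}+(M_{2\lambda d(x,y)}\rho^p(y))^{1/p}\big)
\]
for $\mu$-a.e.\ $x,y$, where $M_R$ is the restricted maximal operator. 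Applying this with $u=d_Y(f(\cdot),f(z))$ for fixed $z$ (which has $g$ as a $p$-weak upper gradient, since $d_Y(f(\cdot),f(z))$ is $1$-Lipschitz in the target) identifies $\widehat g:=C\,(M(g^p))^{1/p}$ as a function controlling the oscillation of $f$.

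Next I would convert this into a genuine upper gradient. The standard consequence of the above pointwise Hajłasz inequality in a doubling Poincaré space is that $C\,(M(g^p))^{1/p}$ (with a possibly larger constant and dilated maximal operator) is, up to a set of measure zero, a $p$-weak — in fact, after a further argument, an honest — upper gradient of $f$; alternatively, invoke that in such spaces $N^{1,p}$ and $D^p$ defined via $p$-weak upper gradients coincide with the Hajłasz space $M^{1,p}$ and its gradients up to constants, so a $p$-weak upper gradient $g$ yields an upper gradient bounded by $C(M(g^p))^{1/p}$ off a null set, which can be corrected to a true upper gradient by redefining on a null set (using that upper gradients are insensitive to modification on a set that meets $p$-a.e.\ curve in a null one-dimensional set). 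The crucial integrability input is then the Hardy--Littlewood maximal theorem: since $g\in L^q(\Om)$ with $q>1$ and $g^p\in L^{q/p}$ with $q/p>1$, the maximal function $M(g^p)\in L^{q/p}$, hence $(M(g^p))^{1/p}\in L^q$. This gives an upper gradient of $f$ in $L^q(\Om)$, so $f\in D^q(\Om;Y)$.

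I expect the main obstacle to be the passage from "pointwise Hajłasz inequality with maximal function" to "honest upper gradient in $L^q$", i.e.\ the bookkeeping that turns the a.e.\ pointwise oscillation bound into a Borel function satisfying the upper-gradient inequality along $p$-a.e.\ (and ultimately every, after a harmless modification) curve; this is where one must be careful about the distinction between $D^q$ and $D^q_{\loc}$, about which maximal operator (global vs.\ restricted) is used, and about the fact that $q$ could be as large as one likes while $p$ is fixed. A secondary technical point is confirming that $d_Y(f(\cdot),f(z))$ genuinely has $g$ as a $p$-weak upper gradient — this is immediate since the target distance function is $1$-Lipschitz and composing with such a function preserves upper gradients — and that the argument is purely local, so boundedness of $\Om$ or global integrability of $f$ itself is never needed, only $g\in L^q(\Om)$.
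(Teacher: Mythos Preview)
Your proposal is correct and follows essentially the same route as the paper: the paper zero-extends $g$, applies the telescoping/Poincar\'e argument (citing \cite[Theorem~8.1.7]{HKSTbook}) to get the pointwise Haj\l asz inequality $d_Y(f(x),f(y))\le C\,d(x,y)\big((Mg^p(x))^{1/p}+(Mg^p(y))^{1/p}\big)$ on suitable balls, invokes the Hardy--Littlewood maximal theorem to place $(Mg^p)^{1/p}$ in $L^q$, and then handles exactly the obstacle you flagged---passing from the Haj\l asz gradient to a genuine upper gradient---by citing \cite[Lemma~10.2.5]{HKSTbook}, after which a countable covering of $\Om$ by such balls gives a single $L^q$ upper gradient on all of $\Om$. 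The only point you underemphasize is that properness of $X$ is used to ensure $f$ is bounded (hence in $L^q$) on compactly contained balls, which is what places $f$ in the Haj\l asz space $M^{1,q}$ locally and makes the cited lemma applicable.
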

\begin{proof}
We can interpret $g$ to be zero extended to the whole space, so that $g\in L^q(X)$.
Consider a ball $B(z,r)\subset B(z,3\lambda r)\subset \Om$,
where $\lambda\ge 1$ is the dilation constant from the Poincar\'e inequality.
By a telescoping argument, see e.g. \cite[Theorem 8.1.7]{HKSTbook}, for all $x,y\in B(z,r)$
we get
\[
d_Y(f(x),f(y))\le C d(x,y) ([Mg^p(x)]^{1/p}+[Mg^p(y)]^{1/p})
\]
for some $C>0$ that only depends on the constants of the doubling and Poincar\'e conditions.
Here $[Mg^p]^{1/p} \in L^q(X)$
by the Hardy--Littlewood maximal theorem, see e.g. \cite[Theorem 3.5.6]{HKSTbook}.
Moreover, $f\in L^q(B(x,r))$ since $X$ is proper and so $f$ is bounded in $B(x,r)$.
It follows that $f$ is in the \emph{Haj{\l}asz--Sobolev space} $M^{1,q}(B(z,r);Y)$.
Then by the proof of \cite[Lemma 10.2.5]{HKSTbook}, we know that $3C[Mg^p]^{1/p}$
is an upper gradient of $f$ in $B(z,r)$. Since we can cover $\Om$ by countably many
such balls, it follows that $3C[Mg^p]^{1/p}$
is an upper gradient of $f$ in $\Om$,
and so $f\in D^q(\Om;Y)$.
\end{proof}

\section{Proofs of the main results}\label{sec:proofs}

In this section we prove our main result, Theorem \ref{thm:main theorem intro},
and also Corollaries \ref{cor:weighted}, \ref{cor:Euclidean}, and \ref{cor:Carnot}.

Throughout this section we assume that
$\Om\subset X$ is nonempty, open, and bounded,
and that $f\colon \Om\to f(\Om)\subset Y$ is a homeomorphism with
$f(\Om)$ open and $\nu(f(\Om))<\infty$.

We will consider the following Lusin property on curves.

\begin{definition}
	Let $\gamma\colon [0,\ell_{\gamma}]\to X$ be a curve and let $A\subset X$, and $f\colon X\to Y$.
	We say that $f$ satisfies the 
	$N_A$-property on $\gamma$
	if for every $N\subset [0,\ell_{\gamma}]$ with $\mathcal L^1(N)=0$, we have
	\[
	\mathcal H^1(f(\gamma(N)\cap A))=0.
	\]
	We call the $N_X$-property simply the $N$-property.
\end{definition}

We give the proof of Theorem \ref{thm:main theorem intro} in the following two propositions.
The idea of separating the argument into two steps,
first considering absolute continuity on curves
and then the Dirichlet seminorm,
comes from Williams \cite{Wi}.

\begin{proposition}\label{prop:abs cont}
Suppose there exists a Borel regular outer measure $\widetilde{\mu}\ge \mu$ on $X$ which is doubling
within a ball $2B_0$ with $\Om\subset B_0$,
and that there exist a set $E\subset\Om$ and a function $Q(x)>1$
on $\Om\setminus E$, with
\[
\limsup_{r\to 0}\frac{\widetilde{\mu}(B(x,r))}{r^{Q(x)}}<\infty
\quad\textrm{and}\quad \liminf_{r\to 0}\frac{\nu(B(f(x),r))}{r^{Q(x)}}>0
\quad \textrm{for all }x\in \Om\setminus E.
\]
Suppose $Q:=\inf_{x\in\Om\setminus E}Q(x)>1$ and let $1\le p\le Q$.
Suppose also that
\begin{equation}\label{eq:E assumption}
\Mod_p(\{\gamma\subset\Om\colon \mathcal H^1(f(\gamma\cap E))>0\})=0
\end{equation}
and $h_f<\infty$ in $\Om\setminus E$.
Then $f$ is satisfies the $N$-property on $p$-a.e. curve $\gamma$ in $\Om$.
\end{proposition}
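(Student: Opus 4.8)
The plan is to treat an arbitrary curve $\gamma$ in $\Om$ by separating the parts of $\gamma$ lying in $E$ and in $\Om\setminus E$. On the $E$-part there is nothing to do: hypothesis \eqref{eq:E assumption} says precisely that for $p$-a.e.\ curve $\gamma$ one has $\mathcal H^1(f(\gamma\cap E))=0$, hence $\mathcal H^1(f(\gamma(N)\cap E))=0$ for every $\mathcal L^1$-null $N\subset[0,\ell_\gamma]$. So I would concentrate on $\Om\setminus E$, where the strategy has three steps: (i) show $\lip_f(x)<\infty$ for $\mu$-a.e.\ $x\in\Om\setminus E$; (ii) for such a curve $\gamma$ and any $\mathcal L^1$-null $N$, apply Lemma~\ref{lem:abs cont} to $A:=N\cap\gamma^{-1}\big((\Om\setminus E)\cap\{\lip_f<\infty\}\big)$ to get $\mathcal H^1(f(\gamma(A)))\le\int_A\lip_f(\gamma(t))\,dt=0$; (iii) show that the leftover set $N_0:=\{x\in\Om\setminus E:\lip_f(x)=\infty\}$ is negligible for $p$-a.e.\ curve, i.e.\ $\Mod_p(\{\gamma:\mathcal H^1(f(\gamma\cap N_0))>0\})=0$. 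Combining (i)--(iii) with the $E$-part gives the $N$-property on $p$-a.e.\ curve.

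For step (i), I would use that $f$ is a homeomorphism, so $B(f(x),l_f(x,r))\subset f(B(x,r))$ and hence $\nu(B(f(x),l_f(x,r)))\le f_\#\nu(B(x,r))$. By \eqref{eq:integral of Jf}, $J_f$ exists $\mu$-a.e.\ and $\int_\Om J_f\,d\mu\le\nu(f(\Om))<\infty$, and $\mu$ is a Vitali measure on $(\Om,d)$ (because $\widetilde\mu\ge\mu$ is doubling there); so by Theorem~\ref{thm:Radon-Nikodym}, for $\mu$-a.e.\ $x$ one has $f_\#\nu(B(x,r))\le(J_f(x)+1)\,\mu(B(x,r))\le(J_f(x)+1)\,\widetilde\mu(B(x,r))$ for all small $r$. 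Now fix such an $x$ in $\Om\setminus E$ at which also $h_f(x)<\infty$, and choose $r_j\downarrow 0$ with $H_f(x,r_j)\le h_f(x)+1$, so $l_f(x,r_j)\ge L_f(x,r_j)/(h_f(x)+1)$; for $j$ large the density hypotheses give $\nu(B(f(x),l_f(x,r_j)))\gtrsim_x l_f(x,r_j)^{Q(x)}$ and $\widetilde\mu(B(x,r_j))\lesssim_x r_j^{Q(x)}$. Chaining these inequalities yields $L_f(x,r_j)\lesssim_x r_j$, so $\lip_f(x)<\infty$. Thus $N_0$ is $\mu$-null; moreover the same computation shows $\limsup_{r\to0}f_\#\nu(B(x,r))/\widetilde\mu(B(x,r))=\infty$ at every $x\in N_0$, and since $f_\#\nu$ is finite and $\widetilde\mu$ is doubling this forces $N_0$ to be $\widetilde\mu$-null.

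Step (iii) is where the real work lies, since $N_0$ need not be $\widetilde{\mathcal H}^p$-$\sigma$-finite (so it cannot simply be folded into $E$), and a homeomorphism can a priori blow a null set up to positive $\mathcal H^1$-measure along a curve. Here I would run a covering argument in the spirit of Section~\ref{sec:exceptional}: stratify $N_0=\bigcup_{m,k}S_{m,k}$ according to $\{h_f\le m\}$ and to the values of the density constants (and, if $Q(\cdot)$ is nonconstant, by a dyadic range of $Q(\cdot)$), and, restricting harmlessly to curves of length $\ge\ell_0$ (sub-curves inherit the $N$-property; let $\ell_0\downarrow0$ at the end) and after a further stratification by the scale in the Hausdorff content, bound $\Mod_p$ of $\{\gamma:\mathcal H^1_\delta(f(\gamma\cap S_{m,k}))>1/n,\ \ell_\gamma\ge\ell_0\}$. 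Since $\widetilde\mu(N_0)=0$, pick an open $V\supset\gamma\cap S_{m,k}$ with $\widetilde\mu(V)$ and $f_\#\nu(V)$ as small as desired; cover $\gamma\cap S_{m,k}$ by balls $B(x,r_x)$ with $x\in S_{m,k}$, $B(x,r_x)\subset V$, $H_f(x,r_x)\le m+1$, $2L_f(x,r_x)<\min(\delta,1/n)$, $r_x<\ell_0/4$, so that $\nu(f(B(x,r_x)))\ge\nu(B(f(x),l_f(x,r_x)))\gtrsim_{m,k}L_f(x,r_x)^{Q(x)}$; use Lemma~\ref{lem:N countable collections} to split this into $N=M^4$ subfamilies with pairwise disjoint half-balls and bounded total overlap; and take $\rho:=2n\sum_l\tfrac{L_f(x_l,r_l)}{r_l}\chi_{2B_l}$. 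A curve $\gamma'$ in the family meets a subcollection of the $B_l$ whose $f$-images (diameters $\le2L_f(x_l,r_l)<\delta$) cover $f(\gamma'\cap S_{m,k})$, so $\sum 2L_f(x_l,r_l)\ge\mathcal H^1_\delta(f(\gamma'\cap S_{m,k}))>1/n$, and in each such $2B_l$ the curve travels length $\ge r_l$, whence $\int_{\gamma'}\rho\,ds\ge2n\sum L_f(x_l,r_l)>1$; thus $\rho$ is admissible. The energy $\int_X\rho^p\,d\mu\le\int_X\rho^p\,d\widetilde\mu$ is then estimated via Lemma~\ref{lem:dilated balls lemma} and the bounded overlap of $\{f(B_l)\}$ (Lemma~\ref{lem:disjoint images}), combining $\sum_l\nu(f(B_l))\lesssim_M\nu(f(V))=f_\#\nu(V)$ (small) with the good-radius bound $L_f(x_l,r_l)^{Q(x_l)}\lesssim_{m,k}\nu(f(B_l))$ and Hölder's inequality, to make $\int_X\rho^p\,d\mu$ arbitrarily small. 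Letting the parameters go to their limits and summing over strata completes step (iii).

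The step I expect to be the main obstacle is closing this last energy estimate: in the absence of Ahlfors regularity there is no lower volume bound linking $r_l$ to $\widetilde\mu(B_l)$, so one must arrange the covering (good radii, the bounded-overlap splitting, and the choice of $V$) very carefully — in particular to absorb any nontrivial singular part of $f_\#\nu$ into the term $f_\#\nu(V)$, which is the only place it can be made small — in order that the admissible functions $\rho$ genuinely have $L^p$-energy tending to zero. Steps (i) and (ii), by contrast, are routine once the differentiation machinery of Section~\ref{sec:Prelis} and Lemma~\ref{lem:abs cont} are in hand.
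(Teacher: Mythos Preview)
Your steps (i) and (ii) are fine, and the overall decomposition is natural. The gap is in step (iii), precisely at the point you flag as the main obstacle, and it is fatal in the critical case $p=Q$.

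After stratifying so that $Q(\cdot)\approx Q_0\ge Q$ on $S_{m,k}$, your covering produces an admissible $\rho$ with
\[
\int_X \rho^p\,d\widetilde\mu \;\lesssim_{m,k,n}\; \bigl(f_\#\nu(V)\bigr)^{p/Q_0}\,\bigl(\widetilde\mu(V)\bigr)^{(Q_0-p)/Q_0}.
\]
Since $N_0$ is $\widetilde\mu$-null, the second factor can be driven to zero, and this does give $\Mod_p=0$ whenever $p<Q_0$. But when $p=Q$ and you are on the stratum $Q(\cdot)=Q$, the estimate degenerates to $\int\rho^Q\lesssim f_\#\nu(V)$, and you have no control over $f_\#\nu(N_0)$: the singular part of $f_\#\nu$ with respect to $\widetilde\mu$ may perfectly well be concentrated on $N_0$, in which case $f_\#\nu(V)\ge f_\#\nu(S_{m,k})>0$ for every open $V\supset S_{m,k}$. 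No amount of ``careful arrangement of the covering'' changes this; the energy simply cannot be made small, so your modulus argument does not close. (There is also a confusion in your text between $V\supset\gamma\cap S_{m,k}$ and $V\supset S_{m,k}$; the latter is what you need for a single $\rho$ to be admissible for all curves in the family.)

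The paper bypasses this by \emph{not} attempting to make any modulus zero. It stratifies $(\Om\setminus E)\cap\{\lip_f>1\}=\bigcup_k A_k$ (these are not subsets of $N_0$; the paper never isolates $\{\lip_f=\infty\}$), and for each $A_k$ builds, for every $\delta=1/i$, a function $g_i$ with $\int_\gamma g_i\,ds\ge \mathcal H^1_{1/i}(f(\gamma\cap A_k))$. The same disjoint-image computation you describe shows $\int_\Om g_i^Q\,d\mu$ is \emph{bounded} by a constant times $\nu(f(\Om))$ --- bounded, not small. Reflexivity of $L^Q$, Mazur's lemma, and Fuglede's lemma then yield a weak limit $g\in L^Q$ with $\int_{\gamma'}g\,ds\ge \mathcal H^1(f(\gamma'\cap A_k))$ for $Q$-a.e.\ (hence $p$-a.e.) subcurve $\gamma'$, from which the $N_{A_k}$-property follows. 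The set $\{\lip_f\le 1\}$ is handled by Lemma~\ref{lem:abs cont} as in your step (ii), and $E$ by hypothesis. The conceptual point: boundedness in $L^Q$, exploited through weak compactness, succeeds exactly where smallness is unavailable.
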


\begin{proof}
	
Define the sets $A_k$, $k\in \N$, as subsets of $\Om\setminus E$ such that $\lip_f(x)> 1$,
$Q(x)\le k$,
$h_f(x)\le k$, as well as

\begin{equation}\label{eq:k and 1 over k}
\limsup_{r\to 0}\frac{\widetilde{\mu}(B(x,r))}{r^{Q(x)}}< k
\quad\textrm{and}\quad
\liminf_{r\to 0}\frac{\nu(B(f(x),r))}{r^{Q(x)}}> 1/k\quad \textrm{for all }x\in A_k.
\end{equation}
Then $\Om\cap \{\lip_f>1\}\setminus E=\bigcup_{k=1}^{\infty}A_k$.

Fix $k\in \N$ and let $A:=A_k$.
Fix $\delta>0$.
For every $x\in A$ we can choose a radius $0<r_x<\delta$ sufficiently small so that
\begin{itemize}
	\item $B(x,2r_x)\subset \Om$,
	$B(f(x),L_f(x,r_x))\subset f(\Om)$, and $L_f(x,r_x)\le \delta/2$ (since $f$ is continuous);
	\item by the fact that $\lip_f(x)> 1$, we can get
	\begin{equation}\label{eq:Lf ge 1}
	\frac{L_f(x,r_x)}{r_x}> 1;
	\end{equation}
	\item by \eqref{eq:k and 1 over k},
	\begin{equation}\label{eq:sup and inf}
		\sup_{0<r\le r_x}\frac{\widetilde{\mu}(B(x,r))}{r^{Q(x)}}\le k
		\quad \textrm{and}\quad \inf_{0<r\le L_f(x,r_x)}\frac{\nu(B(f(x),r))}{r^{Q(x)}}\ge 1/k.
	\end{equation}
\end{itemize}
Finally since $h_f(x)=\liminf_{r\to 0} H_f(x,r)$,
and noting that $h_f(x)\ge 1$ for every $x\in\Om$ by the fact that $X$ is connected,
we can also choose $r_x$ to have
\begin{equation}\label{eq:Hf and hf}
	H_f(x,r_x)\le 2h_f(x)\le 2k.
\end{equation}
From the fact that $\widetilde{\mu}$ is doubling within some ball $2B_0$ with $A\subset \Om\subset B_0$,
we obtain that $(A,d)$ is metric doubling, see \cite[Proposition 3.4]{BBlocal}.
Thus we can apply Lemma \ref{lem:N countable collections}
to the covering $\mathcal G:=\{B(x,r_x)\}_{x\in A}$,
to extract subcoverings
$\mathcal G_{1},\ldots,\mathcal G_{N}$, with
\[
\mathcal G_{j}=\{B_{j,l}=B(x_{j,l},r_{j,l})\}_{l}
\]
and having the good properties given in the Lemma.
Define
\[
g:=2\sum_{j=1}^N\sum_{l}\frac{L_f(x_{j,l},r_{j,l})}{r_{j,l}}\ch_{2B_{j,l}}.
\]
Consider a curve $\gamma$ in $\Om$ with $\diam\gamma >\delta$.
If $\gamma$ intersects $B_{j,l}$, then $\mathcal H^1(\gamma\cap 2B_{j,l})>r_{j,l}$.
Thus we have
\begin{equation}\label{eq:upper gradient property proved}
	\int_{\gamma}g\,ds\ge 2\sum_{\gamma\cap B_{j,l}\neq\emptyset}L_f(x_{j,l},r_{j,l})
	\ge  \sum_{\gamma\cap B_{j,l}\neq\emptyset}\diam (f(B_{j,l}))
	\ge \mathcal H_{\delta}^{1}(f(\gamma\cap A)),
\end{equation}
where the last inequality holds since the balls $B_{j,l}$ satisfying $\gamma\cap B_{j,l}\neq\emptyset$
cover $\gamma\cap A$ and so the sets $f(B_{j,l})$ with $\gamma\cap B_{j,l}\neq\emptyset$ cover $f(\gamma\cap A)$.

Note that for each ball $B_{j,l}$, we have
\[
B\left(f(x_{j,l}),\frac{L_f(x_{j,l},r_{j,l})}{H_f(x_{j,l},r_{j,l})}\right)
= B(f(x_{j,l}),l_f(x_{j,l},r_{j,l}))\\
\subset f(B_{j,l}).
\]
Now for every $j\in\{1,\ldots,N\}$, Lemma \ref{lem:disjoint images} gives
\[
B\left(f(x_{j,l}),\frac{L_f(x_{j,l},r_{j,l})}{2H_f(x_{j,l},r_{j,l})}\right)\cap B\left(f(x_{j,m}),
\frac{L_f(x_{j,m},r_{j,m})}{2H_f(x_{j,m},r_{j,m})}\right)
=\emptyset
\quad\textrm{for all }l\neq m,
\]
and so by \eqref{eq:Hf and hf},
\begin{equation}\label{eq:disjoint images}
B\left(f(x_{j,l}),\frac{L_f(x_{j,l},r_{j,l})}{4k}\right)\cap B\left(f(x_{j,m}),
\frac{L_f(x_{j,m},r_{j,m})}{4k}\right)
=\emptyset
\quad\textrm{for all }l\neq m.
\end{equation}
Denote $Q_{j,l}:=Q(x_{j,l})$ and $L_{j,l}:=L(x_{j,l},r_{j,l})$.
For every $k\in\N$, abbreviating $\sum_{j=1}^N \sum_{l}$ by $\sum_{j,l}$,
from the fact that $\widetilde{\mu}\ge \mu$ we get
\begin{align*}
	\int_\Om g^Q\,d\mu
	\le \int_\Om g^Q\,d\widetilde{\mu}
	&= 2^Q \int_\Om \left(\sum_{j,l}\frac{L_{j,l}}{r_{j,l}}\ch_{2B_{j,l}}\right)^Q\,d\widetilde{\mu}\\
	&\le 2^Q C_0\int_\Om \left(\sum_{j,l}\frac{L_{j,l}}{r_{j,l}}\ch_{\tfrac 13 B_{j,l}}\right)^Q\,d\widetilde{\mu}
	\quad\textrm{by Lemma }\ref{lem:dilated balls lemma}\\
	&= 2^Q C_0 \sum_{j,l}\left(\frac{L_{j,l}}{r_{j,l}}\right)^Q\widetilde{\mu}(\tfrac 13 B_{j,l})\\
	&\le 2^Q C_0 \sum_{j,l}\left(\frac{L_{j,l}}{r_{j,l}}\right)^{Q_{j,l}}\widetilde{\mu}(\tfrac 13 B_{j,l})
\end{align*}
by \eqref{eq:Lf ge 1} and the fact that $Q\le Q_{j,l}$.
Using $\eqref{eq:sup and inf}$, we continue the estimate
\begin{align*}
	\int_\Om g^Q\,d\mu
	&\le 2^Q kC_0 \sum_{j,l}\left(\frac{L_{j,l}}{r_{j,l}}\right)^{Q_{j,l}} r_{j,l}^{Q_{j,l}}\\
	&= 2^Q kC_0 \sum_{j,l}L_{j,l}^{Q_{j,l}}\\
	&\le 2^Q kC_0 (4k)^k\sum_{j,l}\left(\frac{L_{j,l}}{4k}\right)^{Q_{j,l}}
	\quad \textrm{since }Q_{j,l}\le k\\
	&\le 2^Q k^2 (4k)^k C_0 \sum_{j,l}\nu(B(f(x_{j,l}),L_{j,l}/4k))\quad \textrm{by }
	\eqref{eq:sup and inf}\\
	&\le 2^Q k^2 (4k)^k C_0 \nu(f(\Om))\quad \textrm{by }\eqref{eq:disjoint images}\\
	&<\infty.
\end{align*}

Recall that $k\in\N$ is kept fixed, but $g$ depends on $\delta>0$.
Now we can choose functions $g$ with the choices $\delta=1/i$,
to get a sequence $\{g_i\}_{i=1}^{\infty}$ that is bounded in $L^{Q}(\Om)$.
By the reflexivity of the space $L^Q(\Om)$, we find a subsequence (not relabeled) and $g\in L^Q(\Om)$
such that $g_i\to g$ weakly in $L^Q(\Om)$.
By Mazur's and Fuglede's lemmas
(Lemma \ref{lem:Mazur lemma} and Lemma \ref{lem:Fuglede lemma}),
we find convex combinations $\widehat{g}_i:=\sum_{j=i}^{N_i}a_{i,j}g_j$ such that for
$Q$-a.e. curve $\gamma'$ in $\Om$ we have
\begin{equation}\label{eq:gamma prime ineq}
\begin{split}
\int_{\gamma'}g\,ds
=\lim_{i\to\infty}\int_{\gamma'}\widehat{g}_i\,ds
&\ge \lim_{i\to\infty}\mathcal H_{1/i}^{1}(f(\gamma'\cap A))
\quad \textrm{by }\eqref{eq:upper gradient property proved}\\
&= \mathcal H^{1}(f(\gamma'\cap A)).
\end{split}
\end{equation}
By the properties of modulus,
see e.g. \cite[Lemma 1.34]{BB},
for $Q$-a.e. curve $\gamma$ in $\Om$ we have that the above holds
for every subcurve $\gamma'$ of $\gamma$.
For $Q$-a.e. curve $\gamma$ in $\Om$, we also have that
$\int_{\gamma}g\,ds<\infty$, which follows from the definition of the $Q$-modulus.
Fix a curve $\gamma$ satisfying the above two conditions.
We can write any open $U\subset (0,\ell_{\gamma})$ as a union of pairwise disjoint intervals
$U=\bigcup_{j=1}^{\infty}(a_j,b_j)$, and then
\begin{equation}\label{eq:gamma A ineq}
\begin{split}
\int_{U}g(\gamma(s))\,ds
=\sum_{j=1}^{\infty}\int_{(a_j,b_j)}g(\gamma(s))\,ds
&\ge \sum_{j=1}^{\infty}\mathcal H^{1}(f(\gamma((a_j,b_j))\cap A))\quad\textrm{by }\eqref{eq:gamma prime ineq}\\
&\ge \mathcal H^{1}(f(\gamma(U)\cap A))
\end{split}
\end{equation}
by the subadditivity of the $\mathcal H^1$-measure.
Then for any Borel set $S\subset (0,\ell_{\gamma})$, we can let $\eps>0$ and find an open set $U$ such that
$S\subset U\subset (0,\ell_{\gamma})$ and
\begin{align*}
\int_{S}g(\gamma(s))\,ds
\ge \int_{U}g(\gamma(s))\,ds-\eps
&\ge \mathcal H^{1}(f(\gamma(U)\cap A))-\eps\quad\textrm{by }\eqref{eq:gamma A ineq}\\
&\ge \mathcal H^{1}(f(\gamma(S)\cap A))-\eps.
\end{align*}
Letting $\eps\to 0$, we get
\[
\int_{S}g(\gamma(s))\,ds
\ge \mathcal H^{1}(f(\gamma(S)\cap A)),
\]
which in particular proves the $N_A$-property for $Q$-a.e. curve $\gamma$ in $\Om$.
Then the property also holds for $p$-a.e. curve $\gamma$ in $\Om$, since $p\le Q$, see e.g.
\cite[Proposition 2.45]{BB}.
Recall that so far $k\in\N$ was fixed and $A=A_k$.\\

In total, since $\Om=\bigcup_{k=1}^{\infty}A_k\cup E\cup \{\lip_f\le 1\}$, for $p$-a.e. curve $\gamma$
in $\Om$ we have that if $N\subset [0,\ell_{\gamma}]$ with $\mathcal L^1(N)=0$, then
\begin{align*}
&\mathcal H^{1}(f(\gamma(N)))\\
&\quad \le \sum_{k=1}^{\infty}\mathcal H^{1}(f(\gamma(N)\cap A_k))+\mathcal H^{1}(f(\gamma(N)\cap\{\lip_f\le 1\}))
+\mathcal H^{1}(f(\gamma\cap E))\\
&\quad =0+0+0
\end{align*}
by the $N_{A_k}$-property proved just above, Lemma \ref{lem:abs cont},
and the assumption \eqref{eq:E assumption}.
Thus $f$ satisfies the $N$-property on $p$-a.e. curve $\gamma$ in $\Om$.
\end{proof}

\begin{proposition}\label{prop:lip}
	Suppose there exists a Borel regular outer measure
	$\widetilde{\mu}\ge \mu$ on $X$ which is doubling within $\Om$.
		Suppose also there exists a $\mu$-measurable set $E\subset\Om$ and $\mu$-measurable
		functions $Q(x)>1$ and $R(x)>0$ in $\Om\setminus E$ such that
		\begin{equation}\label{eq:first limsup and liminf}
		\limsup_{r\to 0}\frac{\mu(B(x,r))}{r^{Q(x)}}<R(x)\liminf_{r\to 0}\frac{\nu(B(f(x),r))}{r^{Q(x)}}
		\quad \textrm{for }\mu\textrm{-a.e. }x\in \Om\setminus E.
		\end{equation}
		Suppose $Q:=\inf_{x\in\Om\setminus E}Q(x)>1$ and let $1\le q\le Q$.
		Assume also that
		\[
		\begin{cases}
		\frac{Q(\cdot)-q}{Q(\cdot)}(R(\cdot)h_f(\cdot)^{Q(\cdot)})^{q/(Q(\cdot)-q)}\in L^1(
		\Om\setminus E)\quad\textrm{if }1\le q<Q;\\
		R(\cdot)^{1/Q(\cdot)}h_f(\cdot)\in L^{\infty}(\Om\setminus E)\quad\textrm{if }q=Q.
		\end{cases}
		\]
		Then $\lip_f \in L^q(\Om\setminus E)$.
		In the case $q=Q=Q(x)$ for $\mu$-a.e. $x\in \Om\setminus E$, we also get
		\[
		\lip_f(x)^Q\le \Vert R(\cdot)h_f(\cdot)^Q\Vert_{L^{\infty}(\Om)} J_f(x)\quad\textrm{for }
		\mu\textrm{-a.e. }x\in\Om\setminus E.
		\]
\end{proposition}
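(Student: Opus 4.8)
The plan is to reduce the whole statement to the pointwise estimate
\[
\lip_f(x)^{Q(x)}\le R(x)\,h_f(x)^{Q(x)}\,J_f(x)\qquad\textrm{for }\mu\textrm{-a.e. }x\in\Om\setminus E, \qquad (\star)
\]
and then to read off the $L^q$-conclusions from $(\star)$ by Young's inequality together with the bound $\int_\Om J_f\,d\mu\le\nu(f(\Om))<\infty$ recorded in \eqref{eq:integral of Jf}.

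To prove $(\star)$ I would fix a point $x\in\Om\setminus E$ at which $J_f(x)=\lim_{r\to0}\nu(f(B(x,r)))/\mu(B(x,r))$ exists (true for $\mu$-a.e.\ $x$ by \eqref{eq:integral of Jf} and Theorem \ref{thm:Radon-Nikodym}), at which \eqref{eq:first limsup and liminf} holds, and at which $h_f(x)<\infty$ (automatic $\mu$-a.e.\ from the $L^1$- resp.\ $L^\infty$-hypothesis on $h_f$). Choose $r_j\to0$ with $H_f(x,r_j)\to h_f(x)$. For small $r$ one has the inclusion $B(f(x),l_f(x,r))\subset f(B(x,r))$ — valid, as in the proof of Proposition \ref{prop:abs cont}, once $r$ is small enough that $B(f(x),L_f(x,r))$ stays inside the open set $f(\Om)$ — and moreover $l_f(x,r_j)=L_f(x,r_j)/H_f(x,r_j)\to0$. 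Hence, writing $Q=Q(x)$,
\[
\frac{\nu(f(B(x,r_j)))}{\mu(B(x,r_j))}
\ge\frac{\nu(B(f(x),l_f(x,r_j)))}{l_f(x,r_j)^{Q}}\cdot H_f(x,r_j)^{-Q}\cdot\Bigl(\tfrac{L_f(x,r_j)}{r_j}\Bigr)^{Q}\cdot\frac{r_j^{Q}}{\mu(B(x,r_j))}.
\]
Passing to $\liminf_{j\to\infty}$: the left-hand side converges to $J_f(x)$; on the right, $H_f(x,r_j)^{-Q}\to h_f(x)^{-Q}$, the first factor has $\liminf\ge\liminf_{\rho\to0}\nu(B(f(x),\rho))/\rho^{Q}$, the third has $\liminf\ge\lip_f(x)^{Q}$ (because $\lip_f(x)$ is a $\liminf$ over all radii), and the fourth has $\liminf\ge(\limsup_{r\to0}\mu(B(x,r))/r^{Q})^{-1}$. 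Multiplying these through via $\liminf(a_jb_j)\ge(\liminf a_j)(\liminf b_j)$ for nonnegative sequences and then using \eqref{eq:first limsup and liminf} — which in particular forces that last $\limsup$ to be finite — to cancel the two density factors against $R(x)$, one arrives at $(\star)$. I expect this to be the main obstacle: one must carefully follow the chain of $\liminf$ inequalities for products of nonnegative, possibly zero or infinite, sequences, and check that the degenerate cases are harmless — they make $(\star)$ either vacuous or force $J_f(x)=\infty$, which can happen only on a $\mu$-null set since $J_f\in L^1$.

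With $(\star)$ in hand the two cases are routine. If $q=Q=Q(x)$ for $\mu$-a.e.\ $x$, then $(\star)$ gives $\lip_f(x)^Q\le R(x)h_f(x)^QJ_f(x)\le\|R(\cdot)h_f(\cdot)^Q\|_{L^{\infty}(\Om)}\,J_f(x)$ $\mu$-a.e.\ in $\Om\setminus E$, which is the asserted pointwise bound; integrating over $\Om\setminus E$ and invoking \eqref{eq:integral of Jf} yields $\lip_f\in L^Q(\Om\setminus E)$. If $1\le q<Q$, then $q<Q\le Q(x)$ at every point, so from $\lip_f(x)^q\le\bigl(R(x)h_f(x)^{Q(x)}J_f(x)\bigr)^{q/Q(x)}$ and Young's inequality with conjugate exponents $Q(x)/(Q(x)-q)$ and $Q(x)/q$ one obtains
\[
\lip_f(x)^q\le\frac{Q(x)-q}{Q(x)}\bigl(R(x)h_f(x)^{Q(x)}\bigr)^{q/(Q(x)-q)}+\frac{q}{Q(x)}\,J_f(x),
\]
whose first term is in $L^1(\Om\setminus E)$ by hypothesis and whose second is dominated by $J_f\in L^1(\Om\setminus E)$; hence $\lip_f\in L^q(\Om\setminus E)$, as claimed.
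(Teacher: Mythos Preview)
Your approach is essentially the paper's: both establish the pointwise inequality $\lip_f(x)^{Q(x)}\le R(x)\,h_f(x)^{Q(x)}\,J_f(x)$ by comparing $\nu(f(B(x,r)))/\mu(B(x,r))$ with $(L_f(x,r)/r)^{Q(x)}$ via the density hypothesis \eqref{eq:first limsup and liminf}, and then extract the $L^q$-conclusions through Young's inequality together with $J_f\in L^1(\Om)$. The only cosmetic difference is that the paper carries a factor $(1+\eps)^{Q(x)}$ through the chain (choosing $r_x$ with $H_f(x,r_x)\le(1+\eps)h_f(x)$) and removes it at the very end, whereas you pass to a sequence $r_j$ with $H_f(x,r_j)\to h_f(x)$ and arrive at $(\star)$ directly; these are equivalent.

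There is one small gap. In the case $q=Q$ you only verify $\lip_f\in L^Q(\Om\setminus E)$ under the additional hypothesis $Q(x)=Q$ for $\mu$-a.e.\ $x$, but the proposition claims it for general variable $Q(\cdot)$ (the constant-$Q(\cdot)$ assumption enters only in the \emph{second} conclusion, the pointwise bound involving $\Vert R\,h_f^Q\Vert_{L^\infty}$). The fix is immediate from your $(\star)$: raising both sides to the power $Q/Q(x)\le 1$ gives
\[
\lip_f(x)^Q\le\bigl(R(x)^{1/Q(x)}h_f(x)\bigr)^Q\,J_f(x)^{Q/Q(x)}
\le \Vert R(\cdot)^{1/Q(\cdot)}h_f(\cdot)\Vert_{L^\infty(\Om\setminus E)}^Q\,J_f(x)^{Q/Q(x)},
\]
and since $\mu(\Om)<\infty$ and $J_f\in L^1(\Om)$, the elementary bound $t^\alpha\le 1+t$ for $0\le\alpha\le 1$ yields $J_f(\cdot)^{Q/Q(\cdot)}\in L^1(\Om)$. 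This is precisely how the paper handles it.
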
	
\begin{proof}
For $\mu$-a.e. $x\in \Om\setminus E$,
by \eqref{eq:first limsup and liminf} we have for some $C(x)>0$ and some sufficiently small $r_x>0$ that
\begin{equation}\label{eq:rx and lfxrx condition}
\frac{\mu(B(x,r_x))}{r_x^{Q(x)}}< C(x) R(x)\quad\textrm{and}\quad
\frac{\nu(B(f(x),l_f(x,r_x)))}{l_f(x,r_x)^{Q(x)}}> C(x).
\end{equation}
Fix $\eps>0$.
Since $h_f(x)=\liminf_{r\to 0} H_f(x,r)$,
and noting that $h_f(x)\ge 1$ for every $x\in\Om$,
we can choose $r_x$ so that we also have
\begin{equation}\label{eq:Hf and hf 2}
H_f(x,r_x)\le (1+\eps)h_f(x).
\end{equation}
Note that $B(f(x),l_f(x,r_x))\subset f(B(x,r_x))$.
Thus we estimate
\begin{equation}\label{eq:Jacobian estimate}
\begin{split}
\frac{\nu(f(B(x,r_x)))}{\mu(B(x,r_x))}
&\ge \frac{\nu(B(f(x),l_f(x,r_x)))}{\mu(B(x,r_x))}\\
&\ge \frac{C(x) l_f(x,r_x)^{Q(x)}}{C(x) R(x) r_x^{Q(x)}}\quad\textrm{by }\eqref{eq:rx and lfxrx condition}\\
&= \frac{1}{R(x)}\left(\frac{l_f(x,r_x)}{r_x}\right)^{Q(x)}\\
&\ge \frac{1}{R(x)}\left(\frac{L_f(x,r_x)}{H_f(x,r_x)\cdot r_x}\right)^{Q(x)}\\
&\ge \frac{1}{R(x)(1+\eps)^{{Q(x)}}}\left(\frac{L_f(x,r_x)}{h_f(x)\cdot r_x}\right)^{Q(x)}
\end{split}
\end{equation}
by \eqref{eq:Hf and hf 2}.
Recall from \eqref{eq:integral of Jf} that the Jacobian $J_f$ exists $\mu$-a.e. in $\Om$.
Since \eqref{eq:Jacobian estimate} holds for arbitrarily small $r_x$, we can take
the limit $\liminf_{r_x\to 0}$ to obtain at $\mu$-a.e. $x\in\Om\setminus E$ that
\[
J_f(x)\ge \frac{1}{R(x)(1+\eps)^{{Q(x)}}}\frac{\lip_f(x)^{Q(x)}}{h_f(x)^{Q(x)}}.
\]
Thus
\begin{equation}\label{eq:lipf estimate}
\lip_f(x)\le (1+\eps) R(x)^{1/Q(x)}h_f(x)J_f(x)^{1/Q(x)}
\end{equation}
and so for any $1\le q< Q$, we get by Young's inequality
\begin{align*}
\lip_f(x)^q
&\le (1+\eps)^q R(x)^{q/Q(x)}h_f(x)^q J_f(x)^{q/Q(x)}\\
&\le \frac{Q(x)-q}{Q(x)}\big(R(x)(1+\eps)^{Q(x)}h_f(x)^{Q(x)}\big)^{q/(Q(x)-q)} + J_f(x),
\end{align*}
where we estimated simply $q/Q(x)\le 1$ for the second term.
Using also \eqref{eq:integral of Jf}, we conclude
\begin{align*}
&\int_{\Om\setminus E}\lip_f(x)^q\,d\mu(x)\\
&\qquad \le (1+\eps)^{qQ/(Q-q)}\int_{\Om\setminus E}\frac{Q(x)-q}{Q(x)}(R(x)h_f(x)^{Q(x)})^{q/(Q(x)-q)}\,d\mu(x)\\
&\qquad\qquad\qquad\qquad\qquad\qquad\qquad\qquad\qquad\qquad + \nu(f(\Om))<\infty
\end{align*}
by assumption.

In the case $q=Q$, from \eqref{eq:lipf estimate} we estimate simply
\begin{equation}\label{eq:lipf Jf inequality}
\lip_f(x)^Q\le \Vert (1+\eps) R(\cdot)^{1/Q(\cdot)}h_f(\cdot)\Vert_{L^{\infty}(\Om\setminus E)}^Q J_f(x)^{Q/Q(x)}
\quad\textrm{for }\mu\textrm{-a.e. }x\in \Om\setminus E.
\end{equation}
Since $Q/Q(x)\le 1$ and $J_f\in L^1(\Om)$, and $\mu(\Om)<\infty$, also
$J_f(\cdot)^{Q/Q(\cdot)}\in L^1(\Om)$.
Thus $\lip_f\in L^Q(\Om\setminus E)$.

In the case $Q(x)=Q$ for $\mu$-a.e. $x\in \Om$, \eqref{eq:lipf Jf inequality} gives
\[
\lip_f(x)^Q\le \Vert (1+\eps) R(\cdot)h_f(\cdot)^Q\Vert_{L^{\infty}(\Om)}
 J_f(x)\quad\textrm{for }
\mu\textrm{-a.e. }x\in\Om\setminus E.
\]
Letting $\eps\to 0$, we get the last conclusion.
\end{proof}

\begin{proof}[Proof of Theorem \ref{thm:main theorem intro}]
By Proposition \ref{prop:lip} we know that $\lip_f(x)<\infty$ for $\mu$-a.e. $x\in\Om\setminus E$,
and so we know that for $p$-a.e. curve $\gamma$ in $\Om$, we have $\mathcal L^1(N_{\gamma})=0$ for
\[
N_{\gamma}:=\{t\in [0,\ell_{\gamma}]\colon \gamma(t)\in \Om\setminus E\textrm{ and }\lip_f(\gamma(t))=\infty\}.
\]
By Proposition \ref{prop:exceptional set E} combined with
Proposition \ref{prop:abs cont}, for $p$-a.e. curve $\gamma$ in $\Om$
we thus have $\mathcal H^1(f(\gamma(N_{\gamma})))=0$.
Denoting the end points of $\gamma$ by $x,y$, we get
\begin{align*}
d_Y(f(x),f(y))
&\le \mathcal H^1(f(\gamma([0,\ell_{\gamma}]\setminus N_{\gamma}))\\
&= \mathcal H^1(f(\gamma([0,\ell_{\gamma}]\setminus N_{\gamma})\setminus E))\quad\textrm{by Proposition \ref{prop:exceptional set E}}\\
&\le \int_{\gamma}\lip_f \ch_{\Om\setminus E}\,ds
\end{align*}
by Lemma \ref{lem:abs cont} with $A=[0,\ell_{\gamma}]\cap \gamma^{-1}(\Om\setminus E)\setminus N_{\gamma}$.
Thus $\lip_f \ch_{\Om\setminus E}$ is a $p$-weak upper gradient of $f$ in $\Om$.
We also have $\lip_f \ch_{\Om\setminus E}\in L^q(\Om)\subset L^p(\Om)$ by Proposition \ref{prop:lip},
so we conclude that $f\in D^p(\Om;Y)$.

Since $\lip_f \ch_{\Om\setminus E}$ is a $p$-weak upper gradient of $f$ in $\Om$,
for the minimal $p$-weak upper gradient we get
$g_f\le \lip_f \ch_{\Om\setminus E}$ $\mu$-a.e. in $\Om$.
In the case $p=Q=Q(x)$ for $\mu$-a.e. $x\in \Om\setminus E$, Proposition \ref{prop:lip} now gives
for $\mu$-a.e. $x\in \Om$ that
\[
g_f(x)^Q\le \Vert R(\cdot)h_f(\cdot)^Q\Vert_{L^{\infty}(\Om)} J_f(x).
\]
Then Theorem \ref{thm:Williams QC theorem}
gives for every curve family $\Gamma$ in $\Om$ that
\[
\Mod_Q(\Gamma)\le \Vert R(\cdot)h_f(\cdot)^{Q}\Vert_{L^{\infty}(\Om)}\Mod_Q(f(\Gamma)).
\]

Finally, if $X$ is proper and supports a
$(1,p)$-Poincar\'e inequality, and $\mu$ is doubling, then
by Proposition \ref{prop:p to q} we get $f\in D^{q}(\Om;Y)$.
\end{proof}

Next we consider weighted spaces.
By a weight we simply mean a nonnegative locally integrable function.
Suppose $Y$ is equipped with an Ahlfors regular measure $\nu_0$, and then we add a weight $w_Y$.
Note that $w_Y$ has Lebesgue points $\nu_0$-a.e., see e.g. Heinonen \cite[Theorem 1.8]{Hei}.
For our purposes it is natural to consider the pointwise representative
\begin{equation}\label{eq:wY representative}
w_Y(y)=\liminf_{r\to 0}\frac{1}{\nu_0(B(y,r))}\int_{B(y,r)}w_Y\,d\nu_0,\quad y\in Y.
\end{equation}

\begin{proof}[Proof of Corollary \ref{cor:weighted}]
We can choose $R(x):=(1+\eps)w(x)/w_Y(f(x))$ for arbitrarily small $\eps>0$,
and then the result follows from Theorem \ref{thm:main theorem intro}.
\end{proof}

\begin{proof}[Proof of Corollary \ref{cor:Euclidean} and Corollary \ref{cor:Carnot}]
The fact that $f\in D^q(\Om;\R^n)$
(resp. $f\in D^q(\Om;G)$) follows from Corollary \ref{cor:weighted}, since the Euclidean space
(resp. Carnot group), equipped with
the Lebesgue measure (resp. $Q$-dimensional Hausdorff measure),
support a $(1,1)$-Poincar\'e inequality and are equipped with
an Ahlfors $n$-regular  (resp. $Q$-regular), and hence doubling, measure.

In the case $q=n$ (resp. $q=Q$),
by Proposition \ref{prop:lip} we get
\[
\lip_f(x)^{q} \le \Vert w_Y(f(\cdot))^{-1}h_f(\cdot)^q \Vert_{L^{\infty}(\Om)}J_f(x)<\infty
\]
for $\mathcal L^n$-a.e. (resp. $\mathcal H^Q$-a.e.) $x\in \Om$.
Thus for $q$-a.e. curve $\gamma$ in $\Om$, we have
\[
\mathcal L^1(\{t\in [0,\ell_{\gamma}]\colon \lip_f(\gamma(t))=\infty\})=0.
\]
Since $f\in D^{q}(\Om;Y)$, $f\circ\gamma\colon [0,\ell_{\gamma}]\to Y$ is absolutely continuous
for $q$-a.e. curve $\gamma$ in $\Om$, and then by Lemma \ref{lem:abs cont}
we find that $\lip_f$ is a $q$-weak upper gradient of $f$ in $\Om$.
Thus
\[
g_f(x)^{q} \le \Vert w_Y(f(\cdot))^{-1}h_f(\cdot)^q \Vert_{L^{\infty}(\Om)}J_f(x)
\]
for $\mathcal L^n$-a.e. (resp. $\mathcal H^Q$-a.e.) $x\in \Om$,
and from Theorem \ref{thm:Williams QC theorem} we get
\[
\Mod_q(\Gamma)\le \Vert w_Y(f(\cdot))^{-1} h_f(\cdot)^{q}\Vert_{L^{\infty}(\Om)}\Mod_q(f(\Gamma))
\]
for every curve family $\Gamma$ in $\Om$.
\end{proof}

\begin{remark}
Concerning the assumption $\nu(f(\Om))<\infty$ that we make throughout, note that 
as a continuous mapping $f$ is bounded in every $\Om'\Subset\Om$
(i.e. $\overline{\Om'}$ is a compact subset of $\Om$).
Thus we have $\nu(f(\Om'))<\infty$ and also $f\in L^p(\Om',Y)$,
so if $f\in D^p(\Om';Y)$ then in fact $f\in N^{1,p}(\Om';Y)$.
But since we do not assume $X$ to be proper, there may not be many such sets $\Om'$,
and so we prefer to simply assume $\nu(f(\Om))<\infty$.
\end{remark}

\section{Examples}\label{sec:examples}

In this section we discuss various examples and applications of our main results.

Corollary \ref{cor:weighted} applies to a wide range of weighted spaces.
For example, in $\R^n$ we can consider any weight $w\le \widetilde{w}$, where $\widetilde{w}$
is $p$-admissible or a $p$-Muckenhoupt weight for which
\[
\limsup_{r\to 0}\frac{1}{\mathcal L^n(B(x,r))}\int_{B(x,r)}\widetilde{w}\,d\mathcal L^n<\infty
\]
apart from at most countably many $x\in\R^n$.
See e.g. \cite[Appendix A.2]{BB} for a discussion on these concepts.

We recall that part of the analytic definition of quasiconformality is that $f\in N_{\loc}^{1,Q}(X;Y)$,
and so the case $p=Q$ is of particular interest in the theory.
To the best of our knowledge, in all previous results the assumption
$h_f\in L^{\infty}(\Om)$ has been made in order to obtain $f\in N_{\loc}^{1,Q}(X;Y)$ or $f\in D_{\loc}^{Q}(X;Y)$.
As an elementary application of our results, we note that this strong assumption
can be relaxed if the weight is small where $h_f$ is large.
The following corollary is essentially the case $p=Q$ of
\cite[Corollary 1.3]{Wi}, but there the weight was simply $w=1$.

\begin{corollary}\label{cor:basic}
Let $Q>1$.
Let $(X_0,d,\mu_0)$ and $(Y,d_Y,\nu)$ be Ahlfors $Q$-regular spaces.
Let $X=X_0$ as a metric space
but equipped with the weighted measure $d\mu=w\,d\mu_0$, with $0\le w\le 1$.
Let $\Om\subset X$ be open and bounded and let
$f\colon \Om\to f(\Om)\subset Y$ be a homeomorphism with $f(\Om)$ open and $\nu(f(\Om))<\infty$.
Suppose there is an at most countable set $E\subset \Om$ such that
$h_f<\infty$ in $\Om\setminus E$, and
$w(\cdot)h_f(\cdot)^{Q}\in L^{\infty}(\Om)$.
Then $f\in D^{Q}(\Om;Y)$.
\end{corollary}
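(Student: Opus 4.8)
The plan is to obtain Corollary \ref{cor:basic} as an immediate special case of Corollary \ref{cor:weighted}, taking the trivial choices for the auxiliary data. Concretely, I would set $Y_0 := Y$ with its Ahlfors $Q$-regular measure $\nu_0 := \nu$, together with the constant weight $w_Y \equiv 1$; this $w_Y$ is positive and is represented by \eqref{eq:wY representative}, since the average of the constant $1$ over every ball equals $1$. On the source side I would keep $X_0 := X$ with the Ahlfors $Q$-regular measure $\mu_0$ and the given weight $w$, $0 \le w \le 1$, and for the dominating weight I would choose $\widetilde{w} \equiv 1$, so that $w \le \widetilde{w}$ and $d\widetilde{\mu} := \widetilde{w}\,d\mu_0 = d\mu_0$ is Ahlfors $Q$-regular, hence doubling. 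Finally I take $p = q = Q$, which is admissible since then $1 \le p \le q \le Q$ holds with equality.

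With these choices every hypothesis of Corollary \ref{cor:weighted} is verified. The exceptional set $E$ is at most countable, hence the union of a countable set and the empty set, the latter trivially being $\sigma$-finite with respect to $\widetilde{\mathcal H}^p$. Since $\widetilde{\mu} = \mu_0$ is Ahlfors $Q$-regular, $\widetilde{\mu}(B(x,r)) \le C_A r^Q$ for all $x$ and $r$, and so $\limsup_{r \to 0} \widetilde{\mu}(B(x,r))/r^Q \le C_A < \infty$ throughout $\Omega$, in particular on $\Omega \setminus E$. By assumption $h_f < \infty$ in $\Omega \setminus E$, and in the case $q = Q$ the integrability requirement of Corollary \ref{cor:weighted} reads $\frac{w(\cdot)}{w_Y(f(\cdot))}\, h_f(\cdot)^Q \in L^\infty(\Omega)$, which, since $w_Y \equiv 1$, is precisely the assumed condition $w(\cdot) h_f(\cdot)^Q \in L^\infty(\Omega)$. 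Thus Corollary \ref{cor:weighted} yields $f \in D^p(\Omega; Y) = D^Q(\Omega; Y)$, which is the claim.

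Alternatively one could feed the same data directly into Theorem \ref{thm:main theorem intro}, with $\mu = w\mu_0$, $\nu = \nu_0$, $\widetilde{\mu} = \mu_0$, $Q(x) \equiv Q$, $R(x) = (1+\eps) w(x)$ for small $\eps > 0$, and $p = q = Q$; the pointwise density inequality in item (1) then holds at $\mu$-a.e.\ point by Ahlfors regularity together with the Lebesgue point characterization $\limsup_{r\to 0} \mu(B(x,r))/\mu_0(B(x,r)) = w(x)$, exactly as in the proof of Corollary \ref{cor:weighted}. I do not anticipate any real obstacle here: the only thing to watch is the bookkeeping of specializing the variable ``dimension'' $Q(\cdot)$ to the constant $Q$ and the ``density'' $R(\cdot)$ to a constant multiple of $w$, and of checking that $\widetilde{w} \equiv 1$ is a legitimate doubling majorant of $w$ — both of which are immediate.
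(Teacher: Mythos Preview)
Your proposal is correct and matches the paper's own proof, which simply invokes Corollary~\ref{cor:weighted} with the choice $\widetilde{w}\equiv 1$ (and implicitly $w_Y\equiv 1$, $p=q=Q$). Your verification of the hypotheses is more detailed than the paper's one-line reference, but the approach is identical.
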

\begin{proof}
	This follows from Corollary \ref{cor:weighted};
note that we can choose $\widetilde{w}\equiv 1$.
\end{proof}

Next we give a more concrete example.

\begin{example}\label{ex:concrete}
Consider the  square $\Om:=(-1,1)\times (-1,1)$ on the unweighted plane
$X=Y=\R^2$.
Let $0<b\le 1/2$ and consider the homeomorphism $f\colon \Om\to \Om$
\[
f(x_1,x_2):=
\begin{cases}
(x_1,x_2^{b}),\quad x_2\ge 0,\\
(x_1,-|x_2|^{b}),\quad x_2\le 0.
\end{cases}
\]
Essentially, $f$ maps squares centered at the origin to rectangles that become
thinner and thinner near the origin.

By symmetry, it will be enough to study the behavior of $f$ in the unit square $S:=(0,1)\times (0,1)$.
There we have
\[
Df(x_1,x_2)=
\left[
\begin{matrix}
1 & 0 \\
0 & b x_2^{b-1}
\end{matrix}
\right],
\]
and so
\[
|Df|=\sqrt{(1+(b x_2^{b-1})^2}\ge b x_2^{b-1}.
\]
Thus $|Df|\notin L^2(S)$ so that $f$ is not in the classical Dirichlet space $D^2_{\textrm{euc}}(S;S)$,
and then $f\notin D^{2}(S;S)$ by \cite[Corollary A.4]{BB}.
In particular, $f\notin D^{2}(\Om;\Om)$.

Clearly $f$ maps a small square centered at $(x_1,x_2)\in S$ with side length $\eps$
to a rectangle centered at $f(x_1,x_2)$ and with side lengths
$\eps$ and $b x_2^{b-1}\eps+o(\eps)$.
This means that
\[
h_f(x_1,x_2)= bx_2^{b-1}\quad\textrm{in }S_1:=
\{(x_1,x_2) \in S\colon x_2\le b^{1/(1-b)}\}
\]
and
\[
h_f(x_1,x_2)= b^{-1}x_2^{1-b}\quad\textrm{in }S_2:=
\{(x_1,x_2) \in S\colon x_2\ge b^{1/(1-b)}\}.
\]
Of these two sets, $S_1$ is the relevant one for us, since it contains a neighborhood of
the $x_1$-axis in $S$.
Indeed, $h_f$ blows up on the $x_1$-axis and so it is not in $L^{\infty}(\Om)$.
Thus the conditions of Corollary \ref{cor:weighted} with $p=q=2$ are not fulfilled,
as of course they should not be.

On the other hand, let $X=\R^2$ equipped with a weighted Lebesgue measure,
with weight $w(x_1,x_2)=\min\{1,|x_2|^{2-2b}\}$.
Now for $(x_1,x_2)\in S_1$, we have
\[
w(x_1,x_2)h_f(x_1,x_2)^{2}=b^2,
\]
and then it is clear that $wh_f^2\in L^{\infty}(S)$, and then by symmetry $wh_f^2\in L^{\infty}(\Om)$.
Moreover, $h_f<\infty$ in $\Om\setminus E$, with $E$ consisting of the $x_1$-axis
intersected with $\Om$.
We let $\widetilde{w}:=w$ and note that the corresponding weighted Lebesgue measure is doubling.
Then is straightforward to check that
$E$ has finite $\widetilde{\mathcal H}^2$-measure
(the codimension $2$ Hausdorff measure with respect to $\widetilde{w}\,d\mathcal L^2$).
Now Corollary \ref{cor:weighted} gives $f\in D^2(\Om;\Om)$ and then in fact $f\in N^{1,2}(\Om;\Om)$.

We conclude that in a suitable weighted space, we can show $f$ to be a Newton-Sobolev mapping despite the fact
that $h_f$ is not essentially bounded.
\end{example}

In the setting of Theorem \ref{thm:main theorem intro}, we can also consider spaces whose
dimension varies between different parts of the space.
In \cite[Example 6.2]{LaZh} we consider some such spaces in the case $p=1$;
in the case $1<p\le Q$ the situation is quite similar so we do not go into more detail here.

Whereas Corollary \ref{cor:basic} discusses equipping $X$ with a (small) weight,
it is perhaps even more interesting to equip the space $Y$ with a weight.
This can be done in a very flexible way, since the space $D^p(\Om;Y)$ does not depend
on the measure $\nu$ that $Y$ is equipped with.
In this way, we can often avoid the strong assumption $h_f\in L^{\infty}(\Om)$
even in unweighted Euclidean spaces.
Below we use almost the same construction as in Example \ref{ex:concrete}; this is also
similar to \cite[Example 6.8]{LaZh} where we considered the case $p=1$.

\begin{example}\label{ex:plane}
	Consider the  square $\Om:=(-1,1)\times (-1,1)$ on the unweighted plane
	$X=Y=\R^2$.
	Now choose $1<b<\infty$ and consider the homeomorphism $f\colon \Om\to \Om$
	\[
	f(x_1,x_2):=
	\begin{cases}
	(x_1,x_2^{b}),\quad x_2\ge 0,\\
	(x_1,-|x_2|^{b}),\quad x_2\le 0.
	\end{cases}
	\]
	In the unit square $S:=(0,1)\times (0,1)$, we have
	\[
	Df(x_1,x_2)=
	\left[
	\begin{matrix}
	1 & 0 \\
	0 & b x_2^{b-1}
	\end{matrix}
	\right],
	\]
	and so
	\[
	|Df|=\sqrt{(1+(b x_2^{b-1})^2}\le 1+b x_2^{b-1}.
	\]
	Thus $|Df|\in L^2(S)$ and then in fact $|Df|\in L^2(\Om)$ and $f\in N^{1,2}(\Om;\Om)$.
	
	Moreover,
	\[
	h_f(x_1,x_2)= b^{-1}x_2^{1-b}\quad\textrm{in }S_1:=
	\{(x_1,x_2) \in S\colon x_2\le b^{1/(1-b)}\}
	\]
	and
	\[
	h_f(x_1,x_2)= bx_2^{b-1}\quad\textrm{in }S_2:=
	\{(x_1,x_2) \in S\colon x_2\ge b^{1/(1-b)}\}.
	\]
	Again, $S_1$ is the relevant set for us.
	Obviously $h_f\notin L^{\infty}(S_1)$ and then $h_f\notin L^{\infty}(\Om)$,
	since $h_f$ blows up on the $x_1$-axis.
	Thus the previous results in the literature
	do not detect that $f\in N^{1,2}(\Om;\Om)$.
	
	On the other hand, we can equip $Y$ with the weight $w_Y(y_1,y_2)=|y_2|^{u}$, $-1<u<0$.
	Now for $(x_1,x_2)\in \Om$,
	\[
	w_Y(f(x))=||x_2|^{b}|^{u}=|x_2|^{bu},
	\]
	and so for $(x_1,x_2)\in S_1$,
	\begin{equation}\label{eq:weight quantity}
	\frac{h_f(x_1,x_2)^{2}}{w_Y(f(x_1,x_2))}
	=b^{-2}x_2^{2-2b}x_2^{-bu}
	=b^{-2} x_2^{2-2b-bu},
	\end{equation}
	which is constant and thus in $L^{\infty}(S_1)$ if  $b= 2/(2+u)$.
	Going over the values $-1<u<0$, we conclude that all values $1<b<2$ are allowed.
	Since $S_1$ contains a neighborhood of the $x_1$-axis in $S$, clearly
	the quantity $h_f(x_1,x_2)^{2}/w_Y(f(x_1,x_2))$ is then in $L^{\infty}(S)$, and then by symmetry
	in $L^{\infty}(\Om)$.
	Moreover, $h_f<\infty$ in $\Om\setminus E$, with $E$ consisting of the $x_1$-axis
	intersected with $\Om$, so that
	$E$ has finite $\mathcal H^1$-measure.
	Thus Corollary \ref{cor:Euclidean} implies that $f\in N^{1,2}(\Om;\Om)$,
	and that for every curve family $\Gamma$ in $\Om$ we have
	\begin{equation}\label{eq:one sided conclusion}
	\Mod_2(\Gamma)\le C\Mod_2(f(\Gamma))
	\end{equation}
	with $C=\Vert w_Y(f(\cdot))^{-1} h_f(\cdot)^{2}\Vert_{L^{\infty}(\Om)}$.
	
	Consider the curve families
	\[
	\Gamma_i:=\{\gamma_t(s)=(t,s),\ 0<s<1/i\}_{t\in (-1,1)},\quad i\in\N.
	\]
	The families $f(\Gamma_i)$, when reparametrized by arc-length, are
	\[
	f(\Gamma_i)=\{\gamma_t(s)=(t,s),\ 0<s<1/i^b\}_{t\in (-1,1)}.
	\]
	The function $\rho_i=i \ch_{(-1,1)\times (0,1/i)}$ is admissible for $\Gamma_i$, and so
	\[
	\Mod_2(\Gamma_i)\le \int_{\Om}\rho_i^2\,d\mathcal L^2=2i.
	\]
	Then suppose $g_i$ is an admissible function for $\Mod_2(f(\Gamma_i))$.
	Here we again consider $Y=\R^2$ equipped with the Lebesgue measure $\mathcal L^2$.
	Using H\"older's inequality and then Fubini's theorem we estimate
	\begin{align*}
	\frac{2}{i^b}\int_{(-1,1)\times (0,1/i^b)}g_i^2\,d\mathcal L^2
	&\ge \left(\int_{(-1,1)\times (0,1/i^b)}g_i\,d\mathcal L^2\right)^2\\
	&= \left(\int_{-1}^1 \int_{0}^{1/i^b} g_i(x_1,x_2)\,dx_2\,dx_1 \right)^2\\
	&\ge \left(\int_{-1}^1 \,dx_1 \right)^2=4.
	\end{align*}
	In total, we get
	\[
	\Mod_2(\Gamma_i)\le 2i\quad\textrm{and}\quad \Mod_2(f(\Gamma_{i}))\ge 2i^b,
	\]
	where the latter is much larger when $i$ is large.
	Thus there can be no constant $C$
	for which we would have the reverse inequality to \eqref{eq:one sided conclusion},
	namely:
	\[
	\Mod_2(f(\Gamma))\le C\Mod_2(\Gamma)
	\]
	for every curve family $\Gamma$ in $\Om$.
	Of course, the same will happen if we equip $Y$ with the larger weighted measure 
	$w_Y\,d\mathcal L^2$ as we did above.
	This ``shortcoming'' is  natural: $h_f$ not being essentially bounded means that $f$ is certainly not
	a (metric) quasiconformal mapping.
	But we were still able to detect that $f\in N^{1,2}(\Om;\Om)$, as well as prove the 
	one-sided inequality \eqref{eq:one sided conclusion},
	solely by using information about the size of $h_f$.
\end{example}


\begin{thebibliography}{ACMM}

\bibitem{BaKo}Z. Balogh and P. Koskela,
\textit{Quasiconformality, quasisymmetry, and removability in Loewner spaces},
Duke Math. J. 101 (2000), no. 3, 554--577.

\bibitem{BKR}Z. Balogh, P. Koskela, and S. Rogovin,
\textit{Absolute continuity of quasiconformal mappings on curves},
Geom. Funct. Anal. 17 (2007), no. 3, 645--664.

\bibitem{BBlocal}A. Bj\"orn and J. Bj\"orn,
\textit{Local and semilocal Poincar\'e inequalities on metric spaces},
J. Math. Pures Appl. (9) 119 (2018), 158--192.

\bibitem{BB}A. Bj\"orn and J. Bj\"orn,
\textit{Nonlinear potential theory on metric spaces},
EMS Tracts in Mathematics, 17. European Mathematical Society (EMS), Z\"urich, 2011. xii+403 pp.

\bibitem{Boj}B. Bojarski,
\textit{Remarks on Sobolev imbedding inequalities},
Complex analysis, Joensuu 1987, 52--68,
Lecture Notes in Math., 1351, Springer, Berlin, 1988.

\bibitem{Ge62}F. W. Gehring,
\textit{Rings and quasiconformal mappings in space},
Trans. Amer. Math. Soc. 103 (1962), 353--393.

\bibitem{Ge}F. W. Gehring, 
\textit{The $L^p$-integrability of the partial derivatives of a quasiconformal mapping},
Acta Math. 130 (1973), 265-277.

\bibitem{Haj}P. Haj{\l}asz,
\textit{Sobolev spaces on metric-measure spaces. Heat kernels and analysis on manifolds, graphs, and metric spaces},
(Paris, 2002), 173--218,
\newblock{Contemp. Math.}, 338, Amer. Math. Soc., Providence, RI, 2003.

\bibitem{Hei}J. Heinonen,
\textit{Lectures on analysis on metric spaces},
Universitext. Springer-Verlag, New York, 2001. x+140 pp.

\bibitem{HK1}J. Heinonen and P. Koskela,
\textit{Definitions of quasiconformality},
Invent. Math. 120 (1995), no. 1, 61--79.

\bibitem{HK2}J. Heinonen and P. Koskela,
\textit{Quasiconformal maps in metric spaces with controlled geometry},
Acta Math. 181 (1998), no. 1, 1--61.

\bibitem{HKST}J. Heinonen, P. Koskela, N. Shanmugalingam, and J. Tyson,
\textit{Sobolev classes of Banach space-valued functions and quasiconformal mappings},
J. Anal. Math. 85 (2001), 87--139.

\bibitem{HKSTbook}J. Heinonen, P. Koskela, N. Shanmugalingam, and J. Tyson,
\textit{Sobolev spaces on metric measure spaces,
An approach based on upper gradients},
New Mathematical Monographs, 27. Cambridge University Press, Cambridge, 2015. xii+434 pp. 

\bibitem{KaKo}S. Kallunki and P. Koskela,
\textit{Exceptional sets for the definition of quasiconformality},
Amer. J. Math. 122 (2000), no. 4, 735--743.

\bibitem{KaMa}S. Kallunki and O. Martio,
\textit{ACL homeomorphisms and linear dilatation},
Proc. Amer. Math. Soc. 130 (2002), no. 4, 1073--1078.

\bibitem{KoRo}P. Koskela and S. Rogovin,
\textit{Linear dilatation and absolute continuity},
Ann. Acad. Sci. Fenn. Math. 30 (2005), no. 2, 385--392.

\bibitem{LaZh}P. Lahti and X. Zhou,
\textit{Quasiconformal and Sobolev mappings in non-Ahlfors regular metric spaces},
preprint 2021,
https://arxiv.org/abs/2106.03602

\bibitem{MaMo}G. A. Margulis and G. D.  Mostow,
\textit{The differential of a quasi-conformal mapping of a Carnot-Carathéodory space},
Geom. Funct. Anal. 5 (1995), no. 2, 402--433.

\bibitem{Rud}W. Rudin,
\textit{Functional analysis},
Second edition. International Series in Pure and Applied Mathematics.
McGraw-Hill, Inc., New York, 1991. xviii+424 pp.

\bibitem{Wi}M. Williams,
\textit{Dilatation, pointwise Lipschitz constants, and condition N on curves},
Michigan Math. J. 63 (2014), no. 4, 687--700.

\bibitem{Wi12}M. Williams,
\textit{Geometric and analytic quasiconformality in metric measure spaces},
Proc. Amer. Math. Soc. 140 (2012), no. 4, 1251--1266.

\bibitem{Z2}W. Ziemer,
\textit{Extremal length and p-capacity}, 
Michigan Math. J. 16 1969 43--51. 

\bibitem{Zu}T. Z\"urcher,
\textit{Local Lipschitz numbers and Sobolev spaces},
Michigan Math. J. 55 (2007), no. 3, 561--574.

\end{thebibliography}
\end{document}